\renewcommand{\labelitemi}{\textbullet}
\newcommand{\esssuptext}[1]{\textrm{\rm ess sup}_{#1}}
\newcommand{\esssupmath}[1]{{\underset{#1}{\textrm{\rm ess sup}} }}
\newcommand{\EE}{\mathbb{E}}
\newcommand{\E}{\mathbb{E}}
\newcommand{\NN}{\mathbb{N}}
\newcommand{\nn}{\mathbb{N}}
\newcommand{\PP}{\mathbb{P}}
\newcommand{\RR}{\mathbb{R}}
\newcommand{\rr}{\mathbb{R}}
\newcommand{\cc}{\mathbb{C}}
\newcommand{\bone}{\mathbf{1}}
\theoremstyle{plain}
\newtheorem{theorem}{Theorem}[section]
\newtheorem{corollary}[theorem]{Corollary}
\newtheorem{lemma}[theorem]{Lemma}
\newtheorem{proposition}[theorem]{Proposition}
\theoremstyle{definition}
\newtheorem{remark}[theorem]{Remark}
\numberwithin{equation}{section}
\begin{document}
\title{Global solutions to the supercooled Stefan problem with blow-ups: regularity and uniqueness}
\author{Fran\c{c}ois Delarue, Sergey Nadtochiy, and Mykhaylo Shkolnikov}
\address{Universit\'{e} C\^ote d'Azur, CNRS, Laboratoire J.A. Dieudonn\'{e}, Nice, France.}
\email{delarue@unice.fr}
\address{Department of Applied Mathematics, Illinois Institute of Technology, Chicago, IL 60616.}
\email{snadtochiy@iit.edu}
\address{ORFE Department, Bendheim Center for Finance, and Program in Applied \& Computational Mathematics, Princeton University, Princeton, NJ 08544.}
\email{mshkolni@gmail.com}
\footnotetext[1]{S.~Nadtochiy is partially supported by the NSF CAREER grant DMS-1651294.}
\footnotetext[2]{M.~Shkolnikov is partially supported by the NSF grant DMS-1811723 and a Princeton SEAS innovation research grant.}

\begin{abstract}
We consider the supercooled Stefan problem, which captures the freezing of a supercooled liquid, in one space dimension. A probabilistic reformulation of the problem allows to define global solutions, even in the presence of blow-ups of the freezing rate. We provide a complete description of such solutions, by relating the temperature distribution in the liquid to the regularity of the ice growth process. The latter is shown to transition between (i) continuous differentiability, (ii) H\"older continuity, and (iii) discontinuity. In particular, in the second regime we rediscover the square root behavior of the growth process pointed out by Stefan in his seminal paper \cite{Stefan1} from 1889 for the ordinary Stefan problem. In our second main theorem, we establish the uniqueness of the global solutions, a first result of this kind in the context of growth processes with singular self-excitation when blow-ups are present.  
\end{abstract}

\maketitle


\section{Introduction}

The systematic study of free boundary problems for the heat equation, now referred to as Stefan problems, was initiated by \textsc{Stefan} in 1889, see his series of papers \cite{Stefan1}, \cite{Stefan2}, \cite{Stefan3}, \cite{Stefan4}, as well as the precursor \cite{LC} by \textsc{Lam\'{e}} and \textsc{Clapeyron}. Motivated by the process of ice formation in the polar sea, \textsc{Stefan} formulated and solved the free boundary problem describing the freezing of a liquid in the half-space $\{x_1>0\}$ when a constant temperature below its freezing point is maintained at the surface $\{x_1=0\}$, assuming immediate freezing of the liquid at its freezing point. Subsequently, he also formulated and solved similar problems associated with evaporation and condensation. After a period of dormancy, Stefan problems for the heat equation attracted renewed interest as a result of a lecture by \textsc{Brillouin} at the Institut Henri Poincar\'{e} in 1929 and its publication \cite{Bri}. Investigations of existence, uniqueness and numerical approximation of solutions followed (see \cite[introduction, section 1]{Rub} for a detailed historical review), culminating in the article \cite{Kam} by \textsc{Kamenomostskaja}, who proved the existence and uniqueness of bounded measurable generalized solutions and provided an explicit difference scheme for their numerical approximation, in any dimension and in the presence of an arbitrary number of phases. 

\medskip

Much less is known about the \textit{supercooled} Stefan problem for the heat equation, which captures the freezing of a supercooled liquid. In this problem, the initial temperature of the liquid is taken to be lower than the temperature maintained at the surface $\{x_1=0\}$ that, in turn, lies below the freezing point of the liquid. As first noted in \cite{Sher}, already the one-phase problem in dimension one may exhibit a finite time \textit{blow-up} of the liquid freezing rate, leading to a concurrent instantaneous temperature spike along the surface $\{x_1=0\}$, a physically observed phenomenon. Later works were focused on the distinction between (see \cite{FP1}, \cite{FP2}, \cite{LO}, \cite{FPHO}) and the analysis of the two possible cases: (i) the existence of a unique solution without blow-ups for all time or until the time the entire liquid freezes (see \cite{FP1}, \cite{FP2}, \cite{DF}, \cite{ChSw}, \cite{ChKi}); and (ii) the existence of a unique solution until the blow-up time, at which both (the liquid and the solid) phases are present. Naturally, much subsequent attention has been devoted to the analysis of the arguably more intriguing case (ii), specifically to the behavior just before and at the blow-up time (see \cite{HV}, \cite{KE}, \cite{CK}) and to the regularization of the problem through modifications of the boundary condition (see \cite{Vis}, \cite{DHOX},  \cite{HX}, \cite{FPHO2}, \cite{Wei}). However, the methods available in the literature do not allow a global analysis of the actual supercooled Stefan problem in the presence of blow-ups, the objective of our paper. This point is underpinned by the results in \cite{DF}, \cite{Lu}, \cite{GZ}, \cite[theorem 3.2]{CDGP} (see \cite[displays (3.3.2), (3.3.3)]{CDGP} for the connection with the supercooled Stefan problem) where the notion of global solution is too weak to yield uniqueness, cf.~\cite{DF}, \cite{Lu}, \cite{GZ}, or the well-posedness is only established for a functional of the supercooled Stefan problem solution that does not determine the solution uniquely, cf.~\cite[remarks after theorem 3.2]{CDGP}.

\medskip

In contrast to the previous literature on the subject, rather than to regularize the supercooled Stefan problem
\begin{equation}\label{eq:Stefan}
\begin{split}
& \partial_t u = \frac{1}{2}\partial_{xx}u\quad\text{on}\quad D:=\{(t,x)\in[0,\infty)^2:\,x\ge\Lambda_t\}, \\
& \dot{\Lambda}_t = \frac{\alpha}{2}\partial_xu(t,\Lambda_t),\quad t\ge0, \\
& u(0,x)=f(x),\quad x\ge0 \quad\text{and}\quad u(t,\Lambda_t)=0,\quad t\ge0,
\end{split}
\end{equation}
where $f\ge0$ and $\alpha>0$, we consider the \textit{global solutions} of \eqref{eq:Stefan} \textit{in the presence of blow-ups}. Here, $u(t,\cdot)$ and $\Lambda_t$ represent the negative of the temperature profile and the location of the solid-liquid frontier at time $t$, respectively. The global solutions of the supercooled Stefan and other closely related problems arise not only from the physics of supercooled liquids, but have been recently discovered to play an important role in the contexts of integrate-and-fire models in neuroscience (see 
\cite{lewis_dynamics_2003}
and
\cite{ostojic_synchronization_2009}
for neuroscience papers, 
\cite{Caceres2011,Car2013,Capesas2015}
for a PDE approach to those models, and 
\cite{DIRT1}, as well as \cite{DIRT2}, \cite{DIRT3}, for a probabilistic approach), interbank lending network models in finance (see \cite{NadShk1}, \cite{HLS}, \cite{NadShk2}, \cite{LS}, \cite{KR}, \cite{LKR}), and growth processes in probability theory (see \cite{DT}). In particular, \cite[theorem 4.4 and remark 4.5]{DIRT1} guarantee, for a variant of \eqref{eq:Stefan}, the existence of global solutions in which the intervals of instantaneous freezing are chosen to be minimal (more details can be found below in this introduction), referred to as \textit{physical} solutions. Our aim herein is two-fold: (i) to supply a comprehensive description of the physical solutions, including \textit{regularity} estimates for the free boundary $\Lambda$ in the vicinity of blow-ups (that is, near $t>0$ with $\dot{\Lambda}_t=\infty$); (ii) to establish the \textit{uniqueness} of the physical solution for given $f$ and $\alpha$.

\medskip

It is important to stress that the global well-posedness of the supercooled Stefan problem is shown herein \textit{without} the assumption that the initial density $f$ is bounded above by $1/\alpha$. The latter assumption, in particular, excludes discontinuities in $\Lambda$, and it is crucial for the well-posedness proofs in \cite{FP2}, \cite{ChSw}, \cite{LS2}. The results of this paper cover the general case and, hence, require the use of novel arguments. Needless to say that, although it is out of the scope of this work, the adaptation of our approach to the higher-dimensional setting is an exciting prospect.

\medskip

Our key tool in the study of the supercooled Stefan problem \eqref{eq:Stefan} is the following probabilistic reformulation. For a random variable $X_{0-}\ge 0$, an independent standard Brownian motion $B$, and a constant $\alpha>0$, consider the problem of finding a non-decreasing right-continuous function $\Lambda:\,[0,\infty)\to\mathbb{R}$ with left limits such that
\begin{equation}
\label{Stefan_prob}
\begin{split}
& X_t=X_{0-}+B_t-\Lambda_t,\quad t\ge0, \\
& \Lambda_t=\alpha\mathbb{P}(\tau\le t),\quad t\ge0,\quad\text{where}\quad \tau=\inf\{t\ge 0:\,X_t\le 0\}. 
\end{split}
\end{equation}
Assume that $X_{0-}$ possesses a density $f$ in the Sobolev space $W^1_2([0,\infty))$ with $f(0)=0$, and that the derivative $\dot{\Lambda}$ exists as a function in $L^2([0,T])$, for some $T\in(0,\infty)$. Then, for every $t\in[0,T]$, the law of the random variable $X_t\,\bone_{\{\tau\ge t\}}$ admits a density $p(t,\cdot)$ on $(0,\infty)$, and these combine to give the unique solution in the Sobolev space $W^{1,2}_2([0,T]\times[0,\infty))$ of the Cauchy-Dirichlet problem  
\begin{equation}
\partial_t p = \frac{1}{2}\partial_{xx} p + \dot{\Lambda}_t\partial_xp,\quad p(0,\cdot)=f,\quad p(\cdot,0)=0,\quad\text{with}\quad \dot{\Lambda}_t=\frac{\alpha}{2}\partial_x p(t,0),\quad t\in[0,T]
\end{equation}  
(cf.~\cite[proof of proposition 4.2(b)]{NadShk1}). Thus, 
\begin{equation}\label{u_to_p}
u(t,x):=p(t,x-\Lambda_t)
\end{equation}
is a solution in $W^{1,2}_2(\{(t,x)\in[0,T]\times[0,\infty):\,x\ge\Lambda_t\})$ of the supercooled Stefan problem \eqref{eq:Stefan} on the time interval $[0,T]$. For a further elaboration of the connection between the problems \eqref{eq:Stefan} and \eqref{Stefan_prob} we point to the upcoming Remarks \ref{rmk.conn1} and \ref{rmk.conn2}. 

\medskip

Two striking features of the probabilistic problem \eqref{Stefan_prob} are: (i) the necessary presence of discontinuities in $\Lambda$ (leading to blow-ups in the supercooled Stefan problem \eqref{eq:Stefan}) for certain pairs $(X_0,\alpha)$, such as the ones satisfying $\mathbb{E}[X_0]<\alpha/2$ (see \cite[theorem 1.1]{HLS}); (ii) the non-uniqueness of the jump sizes $X_{t-}-X_t:=\lim_{s\uparrow t} X_s-X_t=\Lambda_t-\Lambda_{t-}$ at the times of discontinuity (cf.~\cite[discussion preceding definition 2.2]{DIRT2}, as well as \cite[p.~7, last paragraph]{NadShk2}). The physical choice of the jump sizes $\Lambda_t-\Lambda_{t-}$ in the supercooled Stefan problem \eqref{eq:Stefan} amounts to picking the smallest non-negative numbers so that the total energy of the system is conserved. On the other hand, the interpretation of the probabilistic problem \eqref{Stefan_prob} in neuroscience, finance and probability theory motivates the selection of each $X_{t-}-X_t$ as the smallest non-negative number that allows for a right-continuous continuation of $X_s$, $s\in[0,t)$ to $[t,\infty)$ (cf.~\cite[paragraph preceding definition 2.2]{DIRT2}, \cite[p.~7, last paragraph]{NadShk2} and \cite[p.~2, last paragraph]{DT}). A straightforward adaptation of \cite[proposition 2.7, theorem 4.4 and remark 4.5]{DIRT2} to the setting of \eqref{Stefan_prob} shows that both of these minimality conventions result in 
\begin{equation}\label{phys_cond}
X_{t-}-X_t=\inf\Big\{x>0:\;\mathbb{P}\big(\tau\geq t,\,X_{t-}\in(0,x]\big)<\frac{x}{\alpha}\Big\},\quad t\ge0.  
\end{equation}
We refer to solutions of \eqref{Stefan_prob} fulfilling the jump condition \eqref{phys_cond} as \textit{physical}. The global existence of physical solutions is known under certain (natural) assumptions on the density of the initial condition $X_{0-}>0$, see e.g.~\cite[Subsection 4.2]{DIRT2}, \cite[Theorem 2.3]{NadShk1}, \cite[Theorem 3.2]{LS}.

\medskip

Our first main theorem provides a comprehensive description of the physical solutions $(X,\Lambda)$ to the probabilistic problem \eqref{Stefan_prob}.

\begin{theorem}\label{thm1}
Let $X_{0-}$ possess a density $f$ on $[0,\infty)$ that is bounded and changes monotonicity finitely often on compacts (and, in particular, may and will be assumed to be right-continuous). Then, for any physical solution $(X,\Lambda)$ of \eqref{Stefan_prob} started from $X_{0-}$, and for any $t>0$, the density $\rho(t,\cdot)$ of the restriction of the distribution of $X_{t-}\,\bone_{\{\tau \geq t\}}$ to $(0,\infty)$ is real analytic on $(0,\infty)$ and possesses the properties of $f$ stated above on $[0,\infty)$.
Moreover, every $t\geq0$ falls into exactly one of the three categories: 
\begin{enumerate}[(i)]
\item If $\limsup_{x\downarrow0} x^{-1}\rho(t,x)<\infty$, then $\Lambda\in C^1([t,t+\epsilon))$ for some $\epsilon>0$.\smallskip
\item If $\limsup_{x\downarrow0} x^{-1}\rho(t,x)=\infty$ but $\lim_{x\downarrow0} \rho(t,x)<\frac{1}{\alpha}$, then $\Lambda$ is $1/2$-H\"older continuous on $[t,t+\epsilon)$ for some $\epsilon>0$. \smallskip
\item If $\lim_{x\downarrow0} \rho(t,x)\ge\frac{1}{\alpha}$, then
\begin{equation}\label{jump_size}
\Lambda_t-\Lambda_{t-}=-(X_t-X_{t-})=\inf\Big\{x>0:\;\mathbb{P}\big(\tau\geq t,\,X_{t-}\in(0,x]\big)<\frac{x}{\alpha}\Big\}\ge 0. 
\end{equation}
\end{enumerate}
In all cases, there exists an $\epsilon>0$ such that $\Lambda\in C^1((t,t+\epsilon))$ and the densities $p(s,\cdot)$, $s\in(t,t+\epsilon)$ of the restrictions of the distributions of $X_s\,\bone_{\{\tau\geq s\}}$ to $(0,\infty)$, $s\in(t,t+\epsilon)$ are real analytic on $(0,\infty)$ and form a classical solution of the Dirichlet problem 
\begin{equation}\label{Dir_problem}
\partial_t p=\frac{1}{2}\partial_{xx}p+\dot{\Lambda}_t\partial_xp,\;\; p(\cdot,0)=0\;\;\text{on}\;\;(t,t+\epsilon),\quad\text{with}\quad \dot{\Lambda}_s=\frac{\alpha}{2}\partial_x p(s,0),\;s\in(t,t+\epsilon).
\end{equation}
\end{theorem}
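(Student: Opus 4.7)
The plan is to reduce the statement to a restart argument at time $t$: in case (iii), the physical condition \eqref{phys_cond} forces a positive jump of the asserted size, while in all three cases the density near $0$ after the (possible) jump is strictly below $1/\alpha$, which enables a local well-posedness and regularity analysis on $(t, t+\epsilon)$.

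I would first dispose of case (iii). The first assertion of the theorem identifies $\rho(t,\cdot)$ as the density of the restriction of $X_{t-}\bone_{\{\tau\geq t\}}$ to $(0,\infty)$, so $\PP(\tau\geq t,\,X_{t-}\in(0,x]) = \int_0^x \rho(t,y)\,dy$; if $\rho(t,0+)\geq 1/\alpha$, then for all sufficiently small $x>0$ this integral exceeds $x/\alpha$, forcing the infimum in \eqref{phys_cond} to be strictly positive, and the asserted jump formula \eqref{jump_size} is simply \eqref{phys_cond} rewritten at $t$. Conversely, in cases (i) and (ii) the strict inequality $\rho(t,0+)<1/\alpha$ forces the infimum to equal zero, so $\Lambda_t=\Lambda_{t-}$. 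In all three cases the post-jump density $p(t+,\cdot) = \rho(t,\cdot + (\Lambda_t-\Lambda_{t-}))$ satisfies $p(t+,0+)<1/\alpha$ strictly (in case (iii) by the minimality in \eqref{phys_cond}, with the finite-monotonicity-changes property invoked to rule out borderline situations).

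Next, to obtain the regularity of $\Lambda$ past $t$, I would set up a Picard/fixed-point argument on $[t, t+\epsilon)$: given a candidate drift $\dot\Lambda_\cdot$, solve \eqref{Dir_problem} for $p$ via the half-line heat kernel with that time-dependent drift, then recover $\dot\Lambda$ via the flux formula $\dot\Lambda_s=(\alpha/2)\partial_x p(s,0+)$. The two key boundary-flux estimates are: in case (i), the bound $\rho(t,x)\leq Cx$ near $0$ yields, via standard parabolic estimates on the half-line, a uniformly bounded flux $\partial_x p(s,0+)$ on $[t,t+\epsilon)$, giving $\Lambda\in C^1([t,t+\epsilon))$; in case (ii), the weaker condition $\rho(t,0+)<1/\alpha$ alone yields $\partial_x p(s,0+) = O((s-t)^{-1/2})$ via the heat-kernel identity for constant-like boundary data, which integrates to the asserted $1/2$-H\"older continuity. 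The contraction must be closed in a weighted function space accommodating a $(s-t)^{-1/2}$ singularity of the drift at $s=t$, with the strict inequality $p(t+,0+)<1/\alpha$ providing the room to close it.

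Finally, to obtain $\Lambda\in C^1((t,t+\epsilon))$ together with real analyticity of $p(s,\cdot)$ and the classical-solution property, I would bootstrap: for $s$ bounded away from $t$ the Picard drift $\dot\Lambda$ is H\"older continuous, so parabolic Schauder theory gives $p\in C^\infty((t,t+\epsilon)\times(0,\infty))$; the flux formula then upgrades $\dot\Lambda$ to $C^\infty((t,t+\epsilon))$, making the drift coefficient smooth in time, and factorial-type bounds on $\partial_x^n p(s,\cdot)$ obtained by iterating the PDE give real analyticity on $(0,\infty)$. The "changes monotonicity finitely often on compacts" property for $p(s,\cdot)$ and the opening claim for $\rho(t,\cdot)$ then follow from the isolated-zeros property of analytic functions applied to $\partial_x p$, together with propagation from the initial density $f$. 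The hard part, I expect, will be the flux estimate and Picard contraction in case (ii) under the minimal hypothesis $\rho(t,0+)<1/\alpha$ with possibly $x^{-1}\rho(t,x)\to\infty$: closing the fixed point requires a weighted Green's function estimate on the half-line that accommodates the singular drift, going well beyond standard Stefan well-posedness arguments that typically demand the initial density to be uniformly bounded by $1/\alpha$ in a neighborhood of $0$.
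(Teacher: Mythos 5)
Your high-level dispatch of case (iii) and your flux heuristics for cases (i) and (ii) are in the right spirit, but there are two genuine gaps, and the overall route diverges from the paper's in a way that matters.

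First, your claim that the post-jump density satisfies $p(t+,0+)<1/\alpha$ strictly is wrong. After resolving the physical jump, it is perfectly possible that $\lim_{x\downarrow 0}p(t,x)=1/\alpha$ exactly (this is the boundary sub-case of item (iii) discussed in Remark~1.3 of the paper, where the jump is zero but $\rho(t,\cdot)<1/\alpha$ on $(0,\delta)$ with $\rho(t,0+)=1/\alpha$). The paper does not avoid this case; it handles it in the ``Second case'' of Proposition~\ref{prop:sec2.p0.upperbd}, where the monotone correction $\psi$ (with $\psi(0+)=0$, $\psi>0$ on $(0,\delta)$) supplies a quantitative improvement over the borderline bound $x/\alpha$. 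Your ``finite-monotonicity-changes property invoked to rule out borderline situations'' does not do this: the finitely-many-monotonicity-changes hypothesis is precisely what \emph{permits} the borderline value $\rho(t,0+)=1/\alpha$ while still giving the $\psi$-gain, not what rules it out. Any Picard/fixed-point argument built on a strict margin $p(t+,0+)<1/\alpha$ fails exactly in this borderline sub-case; the paper's replacement for your fixed-point is the probabilistic comparison scheme of Lemma~\ref{le:PhysSolReg.4} and Lemma~\ref{le:PhysSolReg.3}, where the domination of the physical solution by an auxiliary Brownian barrier gives the $1/2$-H\"older bound on $\Lambda$ without any contraction estimate.

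Second, your closing sentence---that the preservation of the ``finitely many monotonicity changes'' property ``follows from the isolated-zeros property of analytic functions \ldots\ together with propagation from the initial density''---skips the hardest step of the whole proof. Isolated zeros of an analytic $\partial_x p(s,\cdot)$ for each fixed $s$ give finitely many zeros on any compact, but they do not control whether the \emph{number} of zeros on a fixed window $[0,R]$ stays bounded as $s\uparrow t_*$ (a putative first time where the property fails). That uniform bound is the content of Lemma~\ref{zero_number}, whose proof tracks the zero \emph{curves} of $\partial_x u$ in the (time,\,space) domain, classifies their local geometry via a parabolic Sturm-type lemma (Lemma~\ref{lem:aux:1}, with $\lfloor k/2\rfloor$ backward-in-time paraboloids plus a possible transversal curve), shows that no two zero curves can merge either in the interior (maximum principle) or at the moving boundary (a delicate Feynman--Kac/reflected-Brownian-motion argument, Lemma~\ref{lem:aux:2}, needed because $\partial_x u$ is not a priori bounded near the free boundary), and then counts sign changes by intersecting with a sawtooth path. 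None of that is subsumed by ``analyticity plus propagation''; without it, the limit $\lim_{x\downarrow 0}\rho(t,x)$ in items (ii)--(iii) is not even known to exist, and the trichotomy cannot be stated. You should treat Lemma~\ref{zero_number} as the load-bearing component rather than a consequence of the rest.
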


\begin{remark}\label{rmk.conn1}
The interpretation of Theorem \ref{thm1} goes as follows. For all $t\ge0$, there exists a non-trivial open interval $(t,t+\epsilon)$ on which the densities $p(s,\cdot)$, $s\in(t,t+\epsilon)$ evolve according to \eqref{Dir_problem}, thus, the corresponding $u(s,\cdot)$ combine to a classical solution of the supercooled Stefan problem \eqref{eq:Stefan} on these intervals. For $t\ge0$ as in item (i), the continuous differentiability of the free boundary $\Lambda$ and the classical solution $u$ extend to $[t,t+\epsilon)$. In contrast, items (ii), (iii) address the blow-ups in the supercooled Stefan problem: at the times $t\ge0$ of items (ii) and (iii) with $\Lambda_t-\Lambda_{t-}=0$, the free boundary $\Lambda$ has infinite speed, but remains continuous, and the solution immediately returns to the classical regime; at the times $t\ge0$ of item (iii) with $\Lambda_t-\Lambda_{t-}>0$, the free boundary $\Lambda$ has infinite speed and here it triggers the minimal discontinuity of the free boundary ensuring the conservation of the total energy in the system, as encapsulated by \eqref{jump_size}. The discontinuity is  succeeded by an immediate comeback to the classical solution regime. We observe that the set of discontinuity times is countable but, in principle, may have accumulation points.
Finally, it is worth mentioning that, while there exist several local results connecting \eqref{Stefan_prob} to the Stefan PDE \eqref{Dir_problem} (see e.g.~\cite{DIRT1}, \cite{NadShk1}, \cite{HLS}), Theorem \ref{thm1} is the first result establishing such a connection for \textit{all} times $t$: indeed, it shows that the density of a physical solution to \eqref{Stefan_prob}, at any time $t$, can be viewed as the boundary value of a classical solution to the associated PDE \eqref{Dir_problem}.
\end{remark}  

\begin{remark}
Item (iii) in Theorem \ref{thm1} can be split further into two sub-items. If $\rho(t,\cdot)\geq\frac{1}{\alpha}$ on a right neighborhood of $0$, then $\Lambda_t-\Lambda_{t-}>0$. If $\lim_{x\downarrow0} \rho(t,x)=\frac{1}{\alpha}$ but $\rho(t,\cdot)<\frac{1}{\alpha}$ on a non-trivial interval $(0,\delta)$, then $\Lambda$ is right-continuous but not $1/2$-H\"older right-continuous at $t$ (unlike item (ii) in Theorem \ref{thm1}). In the latter situation, the magnitude of the increments $\Lambda_s-\Lambda_t$, for sufficiently small $s>t$, is controlled by the decay of $\rho(t,\cdot)$ near $0$, as can be inferred from the proofs of Propositions \ref{prop:sec2.p0.upperbd} and \ref{prop:Holder.1} below.
\end{remark}

Our second main theorem guarantees the uniqueness of the physical solution for any fixed initial condition. 

\begin{theorem}\label{thm:uniq}
Under the assumptions of Theorem \ref{thm1}, the physical solution $(X,\Lambda)$ of \eqref{Stefan_prob} started from $X_{0-}$ is unique.
\end{theorem}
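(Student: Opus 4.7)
Let $(X^i,\Lambda^i)$, $i=1,2$, be two physical solutions of \eqref{Stefan_prob} driven by the common data $(X_{0-},B)$. Introduce
\[
T^\star:=\inf\{t\ge0:\,\Lambda^1_t\neq\Lambda^2_t\}
\]
and assume for contradiction that $T^\star<\infty$. The right-continuity of $\Lambda^i$ implies $\Lambda^1\equiv\Lambda^2$ on $[0,T^\star)$, hence $X^1\equiv X^2$ on $[0,T^\star)$ and $X^1_{T^\star-}\bone_{\{\tau^1\ge T^\star\}}=X^2_{T^\star-}\bone_{\{\tau^2\ge T^\star\}}$ almost surely. Denoting this common pre-jump distribution by $\nu$, the jump condition \eqref{phys_cond} depends only on $\nu$, so $\Lambda^1_{T^\star}-\Lambda^1_{T^\star-}=\Lambda^2_{T^\star}-\Lambda^2_{T^\star-}$, and therefore $\Lambda^1_{T^\star}=\Lambda^2_{T^\star}$ as well. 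Consequently, the contradiction reduces to the following \emph{local uniqueness} statement: for every $t_0\ge0$ and any two physical solutions sharing the same (post-jump) law at time $t_0$, there exists $\epsilon>0$ such that $\Lambda^1=\Lambda^2$ on $[t_0,t_0+\epsilon]$.

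To prove the local uniqueness I exploit the strong regularity provided by Theorem \ref{thm1}: on some right neighborhood $(t_0,t_0+\epsilon)$ the densities $p^i(s,\cdot)$ are real analytic classical solutions of the \emph{linear} Cauchy-Dirichlet problem \eqref{Dir_problem} with common initial/boundary datum $\rho(t_0,\cdot)$ and with drifts $\dot\Lambda^i_s=\tfrac{\alpha}{2}\partial_x p^i(s,0^+)$. Introduce the self-consistency map
\[
\Phi(\Lambda)_s:=\Lambda_{t_0}+\frac{\alpha}{2}\int_{t_0}^{s}\partial_x p_\Lambda(r,0^+)\,dr,\qquad s\in[t_0,t_0+\epsilon],
\]
where for a candidate path $\Lambda$, $p_\Lambda$ denotes the weak solution of the linear Cauchy-Dirichlet problem with prescribed drift $\dot\Lambda_s$ and initial datum $\rho(t_0,\cdot)$. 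Every physical solution is a fixed point of $\Phi$, and local uniqueness reduces to showing that $\Phi$ is a strict contraction in a carefully chosen complete metric space. The choice is dictated by the regime of Theorem \ref{thm1} at $t_0$: in regime (i) the natural space is $C^0([t_0,t_0+\epsilon])$, in which classical parabolic Lipschitz estimates yield a contraction constant of order $\sqrt{\epsilon}$; in regime (ii) the drift $\dot\Lambda_s$ blows up like $(s-t_0)^{-1/2}$, so one uses the weighted norm $\|\Lambda\|_\star:=\sup_{s\in(t_0,t_0+\epsilon]}(s-t_0)^{-1/2}|\Lambda_s-\Lambda_{t_0}|$; in regime (iii) the common jump has already been fixed by \eqref{phys_cond}, and the post-jump density falls into case (i) or (ii).

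The principal obstacle lies in regime (ii), where the map $\Phi$ is genuinely singular and self-referential: the boundary value $\rho(t_0,0^+)$ that drives the blow-up of $\dot\Lambda$ is also the coefficient governing the linearised response of $\partial_x p_\Lambda(\cdot,0^+)$ to a perturbation of the drift. An explicit computation via the Green function of the linear Cauchy-Dirichlet problem---or, equivalently, via the reflection formula for Brownian motion with a time-varying absorbing barrier---shows that the Lipschitz constant of $\Phi$ in the norm $\|\cdot\|_\star$ involves precisely the quantity $\alpha\,\rho(t_0,0^+)$. The strict upper bound $\rho(t_0,0^+)<1/\alpha$ from Theorem \ref{thm1}(ii) is then exactly what is needed to bring this constant below $1$; the sharp near-boundary estimates that make the argument quantitative are the ones already developed for Propositions \ref{prop:sec2.p0.upperbd} and \ref{prop:Holder.1}. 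Once local uniqueness is established at every $t_0$, the argument outlined in the first paragraph shows that $T^\star=\infty$, completing the proof.
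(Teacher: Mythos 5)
Your reduction to local uniqueness at a first disagreement time $T^\star$, using that \eqref{phys_cond} determines the jump from the common pre-jump law, matches the paper's opening move, and your instinct that the quantity $\alpha\rho(t_0,0^+)$ governs the contraction is on the right track. However, there is a genuine gap in the handling of regime (iii), and it is precisely the critical case that makes the uniqueness question hard. You assert that after the jump ``the post-jump density falls into case (i) or (ii),'' but this is false: the physical jump condition \eqref{phys_cond} only forces $\int_0^x p(t_0,y)\,\mathrm{d}y < x/\alpha$ for small $x$, which together with the structural properties from Theorem \ref{thm1} gives $p(t_0,0^+)\le 1/\alpha$, not the strict inequality. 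Remark 1.3 of the paper isolates exactly this critical sub-case, where $\lim_{x\downarrow0}\rho(t_0,x)=1/\alpha$, the jump size is zero, and $\Lambda$ is right-continuous but not $1/2$-H\"older at $t_0$. In that case your claimed Lipschitz constant $\alpha\rho(t_0,0^+)$ equals $1$ exactly, so $\Phi$ is not a strict contraction in any of the norms you propose, and the linear Banach fixed-point argument does not close. (There is also a secondary issue that a self-consistent blow-up rate must be used for the weighted norm in this case, since $(s-t_0)^{-1/2}|\Lambda_s-\Lambda_{t_0}|$ is itself unbounded there, but the contraction-constant obstruction is the essential one.)

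The paper sidesteps the linearization altogether with a \emph{nonlinear} sub-contraction. Its Lemma \ref{le:stability.critical.1.n} bounds, for any bounded drift $\widetilde L$ and small $s-t$,
\[
\PP\bigl(0<\inf_{r\in[t,s]}(\mathcal{X}+\widetilde B_r-\widetilde L_r)\le x\bigr)
\ \le\ \frac{x}{\alpha}-\frac{1}{2}\int_0^x\psi(z)\,\mathrm{d}z ,
\]
where $\psi$ encodes how the post-jump density dips strictly below $1/\alpha$ away from $0$ (so $\psi(0^+)=0$ is allowed). Plugging this into the comparison inequality for $\widetilde\Delta_s=\widetilde\Lambda^1_s-\widetilde\Lambda^2_s$ yields
\[
\sup_{r\le s}\widetilde\Delta_r\ \le\ \sup_{r\le s}\widetilde\Delta_r-\frac{\alpha}{2}\int_0^{\sup_{r\le s}\widetilde\Delta_r}\psi(z)\,\mathrm{d}z,
\]
forcing $\sup\widetilde\Delta=0$ because $\psi>0$ near $0$, even though the ``linear part'' of the estimate has constant $1$. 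The existence of such a $\psi$ (monotone, positive near $0$, $\psi(0^+)=0$, $\rho\le 1/\alpha-\psi$) is where Theorem \ref{thm1}'s structural conclusions --- analyticity and finitely many monotonicity changes --- are actually consumed in the uniqueness proof. To repair your argument you would need to either (a) add a second-order expansion of $\Phi$ in the critical case that captures the analogue of the $\int_0^x\psi$ gain, or (b) replace the Banach fixed-point framework by the nonlinear comparison of the paper.
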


\begin{remark}\label{rmk.conn2}
In \cite{DT}, the authors study an interacting particle system on the non-negative integers which can be regarded as a discretization of the problem \eqref{Stefan_prob}. More specifically, the negative of the initial temperature profile is discretized into ``heat particles'' subsequently performing independent simple symmetric random walks and advancing a discrete version of the solid-liquid frontier $\Lambda$ via a discrete analogue of \eqref{jump_size}. That is, the discrete solid-liquid frontier of \cite{DT} moves in the minimal fashion preserving the total energy in the system, as dictated by the physics of supercooled liquids. By \cite[theorem 1.6]{DT} and our Theorem \ref{thm:uniq} the scaling limit of the particle system in \cite{DT} gives the unique physical solution of \eqref{Stefan_prob}. Consequently, the latter captures the actual physical notion of solution to the supercooled Stefan problem \eqref{eq:Stefan} in the presence of blow-ups. 
\end{remark}

Most importantly, Theorem \ref{thm:uniq} yields the first global uniqueness result for the supercooled Stefan problem with blow-ups, formulated as in \eqref{Stefan_prob}. In addition, the problem \eqref{Stefan_prob} is expected to describe the critical regime for a one-dimensional multiparticle diffusion limited aggregation process (cf.~\cite[conjecture 1.4]{DT}), and Theorem \ref{thm:uniq} is a crucial step in the rigorous derivation of the scaling limits in such and related settings, beyond the special case treated in \cite{DT}. Furthermore, in view of \cite[proposition 2.2(i)]{Szn}, our Theorem \ref{thm:uniq} at once settles the propagation of chaos for the constant coefficients version of the mean field particle system in \cite[equation (1.2)]{HLS}; and, while we do not pursue this direction here, we are confident that suitable variants of Theorem \ref{thm:uniq} can be established (and, hence, will complete the proof of the propagation of chaos) for the mean field particle systems in \cite[equation (3.1)]{DIRT2}, motivated by integrate-and-fire models in neuroscience, as well as for the ones in \cite[equation (2.6)]{NadShk1} and the full generality of \cite[equation (1.2)]{HLS}, motivated by interbank lending network models in finance.
It is worth mentioning that, while the global existence results for \eqref{Stefan_prob}, and for related systems, have appeared in the existing literature (e.g., in \cite{DIRT2}, \cite{NadShk1}, \cite{LS}), the question of global uniqueness remained open until now. This is due to the challenging nature of the latter problem, which, in particular, requires an understanding of the exact structure of physical solutions, provided by Theorem \ref{thm1}.
Finally, together with Theorem \ref{thm1}, Theorem \ref{thm:uniq} may serve as the basis for the design and investigation of global numerical schemes for the problems \eqref{eq:Stefan} and \eqref{Stefan_prob}, extending the local numerical schemes proposed in \cite{KR}, \cite{LKR}.  

\medskip

The rest of the paper is structured as follows. In Section \ref{se:2}, we prepare a priori H\"older estimates on the function $\Lambda$ and the boundary behavior of the densities $p(t,\cdot)$. Our main tools include a stochastic comparison method for \eqref{Stefan_prob}
and the Krylov-Safonov estimates \cite{KrySaf}.
The bounds of Section \ref{se:2} are then improved in Section \ref{se:3} to Lipschitz estimates on $\Lambda$ and the boundary behavior of the densities $p(t,\cdot)$, away from the times of blow-ups (Subsection \ref{subse:3.1}). The $C^\infty$-property along with the real analyticity in $x$ of $u$ in the interior of $D$ are shown in Subsection \ref{subse:3.2}, and supplementary features of the derivative $\partial_x p$ are deduced in Subsection \ref{subse:3.3}. These rely on the findings in \cite{DIRT3}, \cite{DIRT1}, \cite{HLS}, Weyl's lemma in the form of \cite[p.~90, Step 4]{McK} and the analyticity assertion of \cite[theorem 1]{Kom}. Section \ref{se:4} contains the proof of Theorem \ref{thm1}, which combines the conclusions of Sections \ref{se:2} and \ref{se:3} with a careful analysis of the zero set of $\partial_x u$ in the spirit of \cite[proof of theorem 5.1]{AF}. Lastly, in Section \ref{se:5}, the proof of Theorem \ref{thm:uniq} is carried out by using Theorem \ref{thm1}.     

\bigskip

\noindent\textbf{Acknowledgement.} We thank James Nolen for pointing one of us to the literature on zero sets of solutions to the heat equation, leading to the completion of a substantial step in the proof of Theorem \ref{thm1}.


\renewcommand{\labelitemi}{\textbullet}


\section{H\"older continuity}
\label{se:2}
We assume that we are given a physical solution $X$ satisfying (\ref{Stefan_prob}), with the associated $\Lambda$ and $\tau$.
For any $t\geq 0$, we denote by $p(t,\cdot)$ and $\rho(t,\cdot)$, respectively, 
the densities of
the restrictions of the distributions of $X_t\,\bone_{\{\tau\geq t\}}$ and $X_{t-}\,\bone_{\{\tau\geq t\}}$
to $(0,\infty)$ (in particular, their integrals may be less than one; we sometimes refer to them as sub-densities to emphasize this fact). (Notice that 
$p(t,\cdot)$ is also the density of 
the restriction of the distribution of $X_t\,\bone_{\{\tau > t\}}$ to $(0,\infty)$.) The existence and global boundedness of such densities, for all $t\geq0$, is shown in Lemma 5.1 of \cite{NadShk1}, under the assumption that the initial condition $X_{0-}$ has a bounded density.
At this stage of the paper, $p(t,\cdot)$ and $\rho(t,\cdot)$ must be regarded as mere measurable functions for which we do not have any obvious canonical version. Later, in Proposition \ref{prop:analytic.1}, we will see that, for any $t>0$, $p(t,\cdot)$ and $\rho(t,\cdot)$ have analytic versions on $(0,\infty)$.
In this section, we show that, for any $t\geq0$, under an additional assumption that is verified later in the paper, there exists a neighborhood $(t,t+\epsilon)$ on which the free boundary $\Lambda$ is H\"older continuous and the densities $p(t,\cdot)$ and $\rho(t,\cdot)$ (which, therefore, coincide) are vanishing and H\"older continuous at zero.

\subsection{Upper bound on the marginal density at zero.}
\smallskip
We begin with the following proposition, which shows that, for any time $t$ at which the profile of $\rho(t,\cdot)$ satisfies an additional assumption (which we finally succeed to check in Section \ref{se:4}, for a large class of initial conditions), there exists a neighborhood $(t,t+\epsilon)$ on which the marginal density at zero remains strictly below $1/\alpha$. This, in particular, implies that $\Lambda$ cannot jump in that neighborhood. 

\begin{proposition}\label{prop:sec2.p0.upperbd}
Fix an arbitrary $t\geq0$ and assume that $\rho(t,\cdot)$ satisfies at least one of the following two conditions: (i) 
$\lim_{\eta \downarrow 0} \esssuptext{x \in (0,\eta)} \rho(t,x) <1/\alpha$, or (ii) 
$\rho(t,\cdot)$ has a version that is locally monotone in a right neighborhood of any point in $[0,\infty)$.
Then, there exist $\epsilon,\,\delta>0$ and $\beta: (0,\epsilon) \rightarrow [0,1)$ such that, for any $z\in(0,\epsilon)$,
$$
\PP(\tau\geq s,\,X_{s-}\leq x) \leq \frac{\beta(z)}{\alpha} x
$$
holds for all $x\in[0,\delta]$ and all $s\in[t+z,t+\epsilon]$. In case $(i)$, 
$\beta$ has an extension to $[0,\epsilon)$ (with values in $[0,1)$) and the above bound is true for $z=0$.
\end{proposition}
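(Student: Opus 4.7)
The plan is to bound the sub-density $\rho(s,\cdot)$ near zero by transporting the assumed bound at time $t$ via a stochastic comparison with killed Brownian motion, and then integrate to recover the inequality on the CDF. The monotonicity of $\Lambda$ provides the key comparison: for $u\geq t$, $X_u\leq W_u:=X_t+(B_u-B_t)$. Hence, on $\{\tau\geq t\}$, the event $\{\tau\geq s,\,X_{s-}\leq x\}$ is contained in $\{W_u>0\text{ for all }u\in(t,s),\,W_s\leq x+\mu(s)\}$, where $\mu(s):=\Lambda_{s-}-\Lambda_t\geq 0$ is a deterministic non-negative drift. Conditioning on $X_t$ (with sub-density $p(t,\cdot)$ on $(0,\infty)$) and applying the reflection principle yields the pointwise inequality
\[
\rho(s,y)\;\leq\;R(h,y+\mu(s)),\qquad R(h,w)\;:=\;\int_0^\infty p(t,y')\,q_h(y',w)\,dy',\qquad h=s-t,
\]
with $q_h(y',w)=\tfrac{1}{\sqrt{2\pi h}}\bigl(e^{-(y'-w)^2/(2h)}-e^{-(y'+w)^2/(2h)}\bigr)$ the killed Brownian motion transition density. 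Integrating in $y$ reduces the proposition to two separate bounds: one on $R$ near $0$, and one on $\mu(s)$.

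For case (i), the assumption $\lim_{\eta\downarrow 0}\esssuptext{x\in(0,\eta)}\rho(t,x)<1/\alpha$ combined with the physical condition \eqref{phys_cond} forces no jump of $\Lambda$ at $t$ (the jump threshold is $0$), so $p(t,\cdot)=\rho(t,\cdot)$, and one can fix $\gamma<1$ and $\delta_0>0$ with $p(t,\cdot)\leq\gamma/\alpha$ on $(0,\delta_0)$. Bounding $q_h$ above by the Gaussian heat kernel, splitting the integral defining $R$ at $\delta_0$, and using the uniform bound $p(t,\cdot)\leq M_0$ from \cite[Lemma 5.1]{NadShk1}, one obtains $R(h,w)\leq \gamma/\alpha+M_0\,e^{-\delta_0^2/(8h)}$ for $w\leq\delta_0/2$. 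The drift $\mu(s)=\alpha\,\PP(t<\tau<s)$ is controlled by the same comparison, since $\{t<\tau<s\}\subseteq\{\min_{[t,s]}W\leq\mu(s)\}$; a first-passage estimate, after splitting the initial distribution at $\delta_0$, yields $(1-\gamma)\mu(s)\leq\gamma\sqrt{2h/\pi}+O(e^{-c/h})$, so $\mu(s)=O(\sqrt h)$. The bootstrap hypothesis $\mu(s)\leq\delta_0/2$ is enforced by the right-continuity of $\Lambda$ at $t$ for sufficiently small $h$, combined with a standard contradiction argument at the first exit time from $[0,\delta_0/2]$.

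Choosing $\epsilon>0$ small enough that $\mu(s)\leq\delta_0/4$ and $M_0\,e^{-\delta_0^2/(8\epsilon)}\leq(1-\gamma)/(2\alpha)$, and setting $\delta:=\delta_0/4$, one concludes that for $s\in[t,t+\epsilon]$ and $y\in[0,\delta]$,
\[
\rho(s,y)\;\leq\;R(h,y+\mu(s))\;\leq\;\frac{(1+\gamma)/2}{\alpha}\;=:\;\frac{\beta_0}{\alpha},\qquad \beta_0\in[0,1).
\]
Integrating over $y\in[0,x]$ yields $\PP(\tau\geq s,X_{s-}\leq x)\leq\beta_0 x/\alpha$; this proves case (i) with a constant $\beta\equiv\beta_0$ extending to $z=0$.

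Case (ii) is the main obstacle. Here the local monotonicity of $\rho(t,\cdot)$, together with the minimality of the jump in \eqref{phys_cond}, only yields $p(t,0+)\leq 1/\alpha$ (equality is not excluded), so the $R$-bound at time $0+$ need not be strict. The strict inequality is recovered after a positive time $z>0$ by a quantitative use of the Dirichlet condition $R(h,0)=0$: the killed heat flow strictly subtracts mass near the boundary, and using monotonicity to control the contribution of $p(t,\cdot)$ away from $0$, one shows $R(z,w)\leq\beta(z)/\alpha<1/\alpha$ uniformly on a right neighborhood of $0$ for every $z>0$. Propagating this bound to $s\in[t+z,t+\epsilon]$ by the same stochastic comparison as in case (i) (now started at time $t+z$ with the strict bound on $R(z,\cdot)$) completes the proof; the delicate step is precisely this quantitative analysis of boundary killing, where the monotonicity hypothesis is used to extract the strict decrease without any a priori bound of the form $p(t,0+)<1/\alpha$.
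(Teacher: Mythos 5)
Your proposal for case (i) is a legitimate alternative to the paper's argument. You use the reflection principle and the killed heat kernel $q_h$ to obtain the pointwise bound $\rho(s,y)\le R(h,y+\mu(s))$, whereas the paper works directly with the CDF $\PP(X_{s-}\le x,\inf_{r<s}X_r>0)$, decomposes the integral $\int_{-\infty}^{x+\widetilde\Lambda_{s-}}(F(x+\widetilde\Lambda_{s-}-y)-F(\widetilde\Lambda_{s-}-y))g(s-t,y)\,dy$ into three ranges of $y$, and does not invoke the positivity of $W$ on $(t,s)$ at all. Your version is in fact tighter (you keep the killing); the paper's is simpler. Your $O(\sqrt h)$ estimate on the drift $\mu(s)$ is correct but unnecessary for the conclusion --- the paper only uses the right-continuity of $\Lambda$ to ensure $\widetilde\Lambda_{s-}\le\delta/3$ on a short interval, which is all that the argument requires.

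Case (ii) is where the proposal has a genuine gap, and you say so yourself (``the delicate step is precisely this quantitative analysis of boundary killing''). The claim $R(z,w)\le\beta(z)/\alpha<1/\alpha$ \emph{uniformly} in $w$ on a right neighborhood of $0$ is exactly the nontrivial content here, and it is not straightforward: as $w\downarrow 0$ both $\tfrac1\alpha\int_0^\infty q_z(y',w)\,dy'$ and $\int_0^{\delta_0}\psi(y')q_z(y',w)\,dy'$ vanish, so one must balance the $\psi$-gain against the vanishing survival probability and against the tail $\int_{\delta_0}^\infty p(t,y')q_z(y',w)\,dy'$, which need not be dominated by $\tfrac1\alpha$ times the corresponding survival mass since $\|p(t,\cdot)\|_{L^\infty}$ may exceed $1/\alpha$ away from $0$. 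The paper's ``Second case'' does exactly this bookkeeping: it writes $p(t,\cdot)=p(t,0)-\psi$ with $\psi$ non-decreasing and strictly positive near $0$, estimates the contribution of the range $y\in(-\varepsilon,\widetilde\Lambda_{s-})$ by $x\bigl(\tfrac1\alpha\int_{-\varepsilon}^{\widetilde\Lambda_{s-}}g-\int_{-\varepsilon}^{\widetilde\Lambda_{s-}}\psi(\widetilde\Lambda_{s-}-y)g\bigr)$, and extracts a gain $h(z)=\inf_{s\in[t+z,t+\epsilon]}\int_0^\varepsilon\psi(y)g(s-t,\widetilde\Lambda_{s-}-y)\,dy>0$ because the Gaussian weight is bounded below once $s-t\ge z$. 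Your sketch does not reach this computation.

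Additionally, the ``propagation'' step as phrased --- ``started at time $t+z$ with the strict bound on $R(z,\cdot)$'' --- is problematic if taken literally: restarting the case (i) argument at time $t+z$ yields a bound only on a $z$-dependent window $[t+z,t+z+\epsilon'(z)]$, and since $\beta(z)\to 1$ as $z\downarrow 0$ one has $\epsilon'(z)\to 0$, so the window does not cover $[t+z,t+\epsilon]$ for a uniform $\epsilon$. No restart is needed: the single inequality $\rho(s,w)\le R(s-t,w+\mu(s))$ already holds for all $s\in[t,t+\epsilon]$, so it suffices to show $\sup_{h\in[z,\epsilon],\,v\in[0,\delta]}R(h,v)<1/\alpha$ directly --- but that is again the estimate you deferred.
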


\begin{proof}
Let $\widetilde{B}_s:=B_s-B_t$ and $\widetilde{\Lambda}_s:= \Lambda_s - \Lambda_t$, for $s\geq t$.
We also recall a useful (elementary) identity
$$
\PP(X_{s-}\leq x,\,\tau\geq s)
=\PP(X_{s-}\leq x,\,\inf_{r\in[0,s)}X_r >0).
$$

\textit{First case}.
Assume that $\lim_{\eta \downarrow 0} \esssuptext{x \in (0,\eta)} \rho(t,x) <1/\alpha$.
Note from \eqref{phys_cond} that, in this case, $X_t=X_{t-}$ and $p(t,\cdot)=\rho(t,\cdot)$.
For any $s\geq t$, we will use the following bound:
\begin{equation*}
\PP \bigl( X_{s-} \leq x , \inf_{r \in [0,s)} X_{r} >0\bigr) 
\leq \PP \Bigl( \widetilde{\Lambda}_{s-} \leq X_{t} {\mathbf 1}_{\{\tau \geq t\}}+ \widetilde{B}_{s} \leq x + \widetilde{\Lambda}_{s-}, \,X_{t} {\mathbf 1}_{\{\tau \geq t\}} >0 \Bigr),
\end{equation*}
which follows from the definition of a physical solution (\ref{Stefan_prob}).
Then, 
\begin{equation*}
\begin{split}
\PP \bigl( X_{s-} \leq x , \inf_{r \in [0,s)} X_{r} >0\bigr) 
&\leq  \PP \Bigl( \widetilde{\Lambda}_{s-} \leq X_{t} {\mathbf 1}_{\{\tau \geq t\}}+ \widetilde{B}_{s} \leq x + \widetilde{\Lambda}_{s-}, \,X_{t} {\mathbf 1}_{\{\tau \geq t\}} >0 \Bigr)\\
&= \int_{\RR}
\PP \Bigl( \widetilde{\Lambda}_{s-} \leq X_{t} {\mathbf 1}_{\{\tau \geq t\}}+ y \leq x + \widetilde{\Lambda}_{s-},  \,X_{t} {\mathbf 1}_{\{\tau \geq t\}} >0 \Bigr)g(s\!-\!t,y)\,\mathrm{d}y,
\end{split}
\end{equation*}
where $g(s,\cdot)$ is the Gaussian kernel of variance $s$ (and zero mean). Let $F$ be the cumulative distribution function of $\rho(t,\cdot)$. 
Then,
\begin{equation*}
\begin{split}
\PP \bigl( X_{s-} \leq 
x , \inf_{r \in [0,s)} X_{r} >0\bigr) 
&\leq
\int_{\RR}
 \bigl( F(x + \widetilde{\Lambda}_{s-} - y) - F(\widetilde{\Lambda}_{s-} - y) \bigr) g(s-t,y)\,\mathrm{d}y
\\
&= \int_{-\infty}^{x+\widetilde{\Lambda}_{s-}}
 \bigl( F(x + \widetilde{\Lambda}_{s-} - y) - F(\widetilde{\Lambda}_{s-} - y) \bigr) g(s-t,y)\,\mathrm{d}y.
\end{split}
\end{equation*}
We split the above term into three parts:
\begin{equation}
\label{eq:main:one}
\begin{split}
\PP \bigl( X_{s-} \leq x , \inf_{r \in [0,s)} X_{r} >0\bigr) 
&\leq \int_{\widetilde{\Lambda}_{s-}}^{x+\widetilde{\Lambda}_{s-}}
 \bigl( F(x + \widetilde{\Lambda}_{s-} - y) - F(\widetilde{\Lambda}_{s-} - y) \bigr) g(s-t,y)\,\mathrm{d}y
 \\
 &\hspace{15pt} 
 +  \int_{-\varepsilon}^{\widetilde{\Lambda}_{s-}}
 \bigl( F(x + \widetilde{\Lambda}_{s-} - y) - F(\widetilde{\Lambda}_{s-} - y) \bigr) g(s-t,y)\,\mathrm{d}y
 \\
&\hspace{15pt} +
 \int_{-\infty}^{-\varepsilon}
 \bigl( F(x + \widetilde{\Lambda}_{s-} - y) - F(\widetilde{\Lambda}_{s-} - y) \bigr) g(s-t,y)\,\mathrm{d}y.
\end{split}
\end{equation}

\smallskip

The first term on the right-hand side of \eqref{eq:main:one} is less or equal to 
\begin{equation*}
F(x) \int_{\widetilde{\Lambda}_{s-}}^{x+\widetilde{\Lambda}_{s-}} g(s-t,y)\,\mathrm{d}y
\leq C_1 x\, \int_{\widetilde{\Lambda}_{s-}}^{x+\widetilde{\Lambda}_{s-}} g(s-t,y)\,\mathrm{d}y,
\end{equation*}
for all $s\geq t$ and all $x\in[0,\delta]$, where $C_1<1/\alpha$ and $\delta>0$ are chosen so that 
$\esssuptext{(0,\delta)} \rho(t,\cdot)\leq C_1$ (which is possible due to 
$\lim_{\eta \downarrow 0} \esssuptext{x \in (0,\eta)} \rho(t,x) <1/\alpha$). 
\smallskip

As for the second term on the right-hand side of (\ref{eq:main:one}), we choose $\epsilon$ to be sufficiently small, so that $\widetilde{\Lambda}_s\leq \delta/3$, for all $s\in[t,\,t+\epsilon]$, $x\in(0,\delta/3]$, and $\varepsilon\in(0,\delta/3]$ (here, we also use the right-continuity of $\Lambda$).
Then, $\esssuptext{(0,x+\widetilde{\Lambda}_{s}+\varepsilon)} \rho(t,\cdot)\leq C_1$
and
\begin{equation*}
F\bigl(x + \widetilde{\Lambda}_{s-} - y\bigr) - F\bigl(\widetilde{\Lambda}_{s-} - y\bigr)
\leq C_1 x, 
\end{equation*}
hence, the second term on the right-hand side of \eqref{eq:main:one} is less or equal to 
$$
C_1 x \int_{-\varepsilon}^{\widetilde{\Lambda}_{s-}} g(s-t,y)\,\mathrm{d}y.
$$

\smallskip

Consider the last term on the right-hand side of (\ref{eq:main:one}).
Due to the fast decay, as $s\downarrow t$, of $g(s-t,x)/g(s-t,y)$, for $x<y<0$, 
$$
\int_{-\infty}^{-2\varepsilon} g(s-t,y)\,\mathrm{d}y
\leq e^{-\varepsilon^2/(2(s-t))} \int_{-\infty}^{-\varepsilon} g(s-t,y)\,\mathrm{d}y,
$$
so that, decreasing if necessary $\epsilon=\epsilon(\varepsilon)>0$, we obtain
$$
\int_{-\infty}^{-2\varepsilon} g(s-t,y)\,\mathrm{d}y
\leq \gamma \int_{
-\infty}^{-\varepsilon} g(s-t,y)\,\mathrm{d}y,
$$
for all $s\in[t,t+\epsilon]$, with $\gamma$ being small enough, so that (using the global boundedness of the density)
 $\gamma\,\|\rho(t,\cdot)\|_{L^{\infty}} + C_1<1/\alpha$.
Then, 
for all $s\in[t,\,t+\epsilon]$, $x\in(0,\delta/3]$, and $\varepsilon=\delta/6$,
\begin{equation*}
\begin{split}
&\int_{-\infty}^{-\varepsilon}
\bigl( F(x + \widetilde{\Lambda}_{s-} - y) - F(\widetilde{\Lambda}_{s-} - y) \bigr) g(s-t,y)\,\mathrm{d}y
\\
&\quad \leq   \|\rho(t,\cdot)\|_{L^{\infty}}\, x\, \int_{-\infty}^{-2\varepsilon} g(s-t,y)\,\mathrm{d}y
+ C_1\, x\, \int_{-2\varepsilon}^{-\varepsilon} g(s-t,y)\,\mathrm{d}y,
\\
&\quad \leq x\,\big(\gamma\,\|\rho(t,\cdot)\|_{L^{\infty}} + C_1 \big) \int_{
-\infty}^{-\varepsilon} g(s-t,y)\,\mathrm{d}y.
\end{split}
\end{equation*}
Collecting the above, we conclude that there exist $\epsilon>0$ and $C_2<1/\alpha$ such that 
\begin{equation*}
\PP \bigl( X_{s-} \leq x , \inf_{r \in [0,s)} X_{r} >0\bigr) \leq C_2\, x
\end{equation*}
holds for all $s\in[t,\,t+\epsilon]$ and $x\in(0,\delta/3]$.
Thus, the statement of the proposition holds with $\beta:=C_{2} \alpha$.

\medskip

\textit{Second case}.
Assume now that $\rho(t,\cdot)$
has a version that is locally monotone in a right neighborhood of any point in $[0,\infty)$. Then, without loss of generality we can assume that it is right-continuous. Resolving the jump (if it occurs), we switch from $\rho(t,\cdot)$ to $p(t,\cdot)$. Since this transition amounts to a shift of variables, we conclude from the assumption that 
$$
p(t,x) = p(t,0) - \psi(x),\quad x\geq0,
$$ 
where $\psi$ is monotone in a right neighborhood of zero (say $[0,\delta]$). By right-continuity, $\lim_{x \downarrow 0} \psi(x)=0$. Obviously, if $p(t,0)<1/\alpha$, we are led back to the first case. If $p(t,0)=1/\alpha$, the fact that we have resolved the jump forces $\psi$ to be non-decreasing and strictly positive in a right neighborhood of zero.
Because of a possible jump at time $t$, the inequality \eqref{eq:main:one} holds provided we now denote by $F$ the cumulative distribution function of $p(t,\cdot)$.
Let us estimate the terms on the right-hand side of \eqref{eq:main:one}.

\smallskip 
 
Repeating the same arguments as in the first case, we conclude that, for any $\varepsilon\in(0,\delta/6]$, there exists an $\epsilon=\epsilon(\varepsilon)$ such that,
for all $s\in[t,t+\epsilon]$ and all $x\in[0,\delta/3]$, $\widetilde{\Lambda}_{s-}\leq \delta/3$ and the first and the last terms on the right-hand side of (\ref{eq:main:one}) add up to at most
$$
\frac{x}{\alpha} \bigg( \int_{-\infty}^{-\varepsilon} g(s-t,y)\,\mathrm{d}y + \int_{\widetilde{\Lambda}_{s-}}^{x+\widetilde{\Lambda}_{s-}} g(s-t,y)\,\mathrm{d}y \bigg).
$$

\smallskip

It only remains to estimate the second term on the right-hand side of (\ref{eq:main:one}). Since $\psi$ is decreasing on $[0,\delta]$, we have:
\begin{equation*}
\begin{split}
F(x+\widetilde{\Lambda}_{s-}-y) - F(\tilde{\Lambda}_{s-}-y) &= \int_{0}^{x} p(t,z+\widetilde{\Lambda}_{s-} - y)\,\mathrm{d}z
\\
&= x\, p(t,0) - \int_0^x \psi(z+\widetilde{\Lambda}_{s-}-y)\,\mathrm{d}z
\leq \frac{x}{\alpha} - x \psi(\widetilde{\Lambda}_{s-}-y),
\end{split}
\end{equation*}
for all $s\in[t,t+\epsilon]$, all $x\in[0,\delta/3]$, and all $y\in[-\delta/3,\widetilde{\Lambda}_{s-}]$. Thus, for any $\varepsilon\in(0,\delta/3]$, we conclude that the second term on the right-hand side of (\ref{eq:main:one}) is less or equal to
$$
x\bigg(\frac{1}{\alpha}\int_{-\varepsilon}^{\widetilde{\Lambda}_{s-}} g(s-t,y)\,\mathrm{d}y - \int_{-\varepsilon}^{\widetilde{\Lambda}_{s-}} \psi(\widetilde{\Lambda}_{s-}-y)\,g(s-t,y)\,\mathrm{d}y\bigg),
$$
for all $s\in[t,t+\epsilon]$ and all $x\in[0,\delta/3]$.
Fixing $\varepsilon=\delta/6$, we notice that, for any $z\in(0,\epsilon)$, there exists $h(z)>0$, such that
$$
\int_{-\varepsilon}^{\widetilde{\Lambda}_{s-}} \psi(\widetilde{\Lambda}_{s-}-y)\,g(s-t,y)\,\mathrm{d}y
\geq \int_{0}^{\varepsilon} \psi(y)\,
g(s-t,\widetilde{\Lambda}_{s-}-y)\,\mathrm{d}y\geq h(z)
$$
holds for all $s\in[t+z,t+\epsilon]$.
Thus, we conclude that, for any $z\in(0,\epsilon)$,
$$
\PP \bigl( X_{s-} \leq x , \inf_{r \in [0,s)} X_{r} >0\bigr) \leq x\,\bigg(\frac{1}{\alpha} - h(z) \bigg)
$$
holds for  all $x\in[0,\delta/3]$ and all $s\in[t+z,t+\epsilon]$.
Thus, the statement of the proposition holds with $\beta(z)=(1-\alpha h(z))^+$.
\end{proof}

\subsection{H\"older continuity of the free boundary.}
\smallskip
Next, we show that, whenever the marginal sub-density $\rho$ at zero is strictly below $1/\alpha$ on a given time interval, the free boundary $\Lambda$ is 1/2-H\"older continuous on the same interval.

\begin{proposition}\label{prop:Holder.1}
Fix an arbitrary $t\geq0$ and assume that there exist $\epsilon,\,\delta>0$ and $\beta\in[0,1)$ such that
$$
\PP(\tau\geq s,\,X_{s-}\leq x) \leq \frac{\beta}{\alpha} x,
$$
for all $x\in[0,\delta]$ and all $s\in[t,t+\epsilon]$.
Then, $\Lambda$ is 1/2-H\"older continuous in $[t,t+\epsilon]$.
\end{proposition}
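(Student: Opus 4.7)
The strategy is to control the increment $\Lambda_s - \Lambda_r = \alpha\,\PP(r < \tau \leq s)$ for $r, s \in [t, t+\epsilon]$ by a multiple of $\sqrt{s-r}$, through a self-consistent inequality on the modulus of continuity
$\omega(h) := \sup\{\Lambda_s - \Lambda_r : r, s \in [t, t+\epsilon],\ 0 \leq s - r \leq h\}$. First I would note that the hypothesis automatically rules out jumps of $\Lambda$ on $[t, t+\epsilon]$: a jump of size $\Delta > 0$ at some $u \in [t, t+\epsilon]$ would force, by the physical condition \eqref{phys_cond}, that $\PP(\tau \geq u, X_{u-} \in (0, \Delta]) \geq \Delta/\alpha$, contradicting the assumed upper bound $\beta \Delta/\alpha$ with $\beta < 1$. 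Consequently $\Lambda$ is continuous on $[t, t+\epsilon]$, $X_r = X_{r-}$ there, and $\omega(h) \to 0$ as $h \to 0$ by uniform continuity.

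Next, for $t \leq r < s \leq t + \epsilon$, combining the trajectorial identity $X_u = X_r + (B_u - B_r) - (\Lambda_u - \Lambda_r)$, $u \geq r$, with the monotonicity of $\Lambda$, I would derive the inclusion
\[
\{r < \tau \leq s\} \subseteq \bigl\{\tau > r,\ X_r \leq (\Lambda_s - \Lambda_r) + N_{r,s}\bigr\},
\]
where $N_{r,s} := \sup_{u \in [r,s]}(B_r - B_u)$ is independent of $\cF_r$, distributed as $|B_{s-r}|$ by the reflection principle, and therefore satisfies $\EE[N_{r,s}] = \sqrt{2(s-r)/\pi}$ and $\PP(N_{r,s} > a) \leq 2 e^{-a^2/(2(s-r))}$. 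I would then condition on $N_{r,s}$, apply the assumption $\PP(\tau \geq r, X_{r-} \leq x) \leq \beta x/\alpha$ with $x = \omega(s-r) + N_{r,s} \leq \delta$ on the event $\{N_{r,s} \leq \delta/2\}$ (taking $h$ small enough that $\omega(h) \leq \delta/2$, which is available by the first step), and the trivial bound $1$ on the complement. This yields
\[
\Lambda_s - \Lambda_r \leq \beta\,\omega(s-r) + \beta\sqrt{\tfrac{2(s-r)}{\pi}} + 2\alpha\,e^{-\delta^2/(8(s-r))},
\]
and taking the supremum over admissible $(r,s)$ with $s-r \leq h$ gives
$(1-\beta)\,\omega(h) \leq \beta\sqrt{2h/\pi} + 2\alpha\,e^{-\delta^2/(8h)}$.
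Since the exponential term is $o(\sqrt h)$ as $h \to 0$, I get $\omega(h) = O(\sqrt h)$ near $0$, which combined with the uniform boundedness $\omega \leq \alpha$ gives $\omega(h) \leq C \sqrt h$ on all of $[0,\epsilon]$, i.e., the desired $1/2$-H\"older continuity.

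The main obstacle is the self-consistent nature of the bound: $\omega(h)$ appears on both sides. It is precisely the strict inequality $\beta < 1$ that allows one to absorb the $\beta\,\omega(h)$ term and close the argument; this is perfectly consistent with the trichotomy in Theorem \ref{thm1}, where the borderline value $\rho(\,\cdot\,,0+) = 1/\alpha$ corresponds exactly to the regime in which $\Lambda$ may fail to be $1/2$-H\"older and jumps can appear. The remaining technical point, ensuring $\omega(h) \leq \delta/2$ on a right neighborhood of $0$ so that the cutoff at $\delta$ in the assumption can be used, follows automatically from the continuity of $\Lambda$ established in the first paragraph.
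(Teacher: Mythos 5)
Your proof is correct and takes a genuinely different route from the paper. The paper constructs an explicit Picard-type iteration of auxiliary processes $X^n$ that monotonically dominate the physical solution (Lemmas \ref{le:PhysSolReg.4} and \ref{le:PhysSolReg.3}), deduces a bound $\widetilde{L} \leq C_L\sqrt{\varepsilon}$ on the limiting drift of this iteration, and then patches right-H\"older bounds at each $t' \in [t,t+\epsilon)$. You instead set up a self-consistent inequality on the global modulus of continuity $\omega(h)$: using the trajectorial inclusion $\{r<\tau\le s\}\subseteq\{\tau>r,\ X_r\le (\Lambda_s-\Lambda_r)+N_{r,s}\}$, the independence of the running maximum $N_{r,s}$ from $\cF_r$, and the linear-in-$x$ hypothesis, you arrive at $(1-\beta)\omega(h)\le \beta\sqrt{2h/\pi}+2\alpha e^{-\delta^2/(8h)}$, which closes the argument because $\beta<1$. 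This is cleaner and more elementary: it avoids both the auxiliary processes and the comparison lemma, and makes transparent exactly where the assumption $\beta<1$ enters as a contraction constant. Two minor points to keep honest in a write-up: (a) the identification of $\{\tau>r, X_r\le x\}$ with the event in the hypothesis uses $\PP(\tau=r)=0$ and $X_r=X_{r-}$, both of which follow from the continuity of $\Lambda$ established in your first step; (b) the hypothesis applied at $x=0$ already forces $\PP(\tau\ge r, X_{r-}=0)=0$, so there is no issue with the boundary of the interval $(0,x]$ versus $[0,x]$.
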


\begin{proof}
The proof follows the strategy outlined in Section 5 of \cite{NadShk1}, 
and relies on a sequence of auxiliary processes, whose limit will be shown to dominate the physical solution.


It is clear that $\PP(\tau\geq t)>0$ for all $t\geq0$, whenever $\PP(\tau>0)>0$. Since the statement of the proposition holds trivially if $\PP(\tau>0)=0$, we assume that $\PP(\tau\geq t)>0$ for all $t\geq0$. Let us fix an arbitrary $\varepsilon\in(0,\epsilon)$ and consider the sequence of processes $X^n$, $n\in\nn$ defined recursively as follows:
\begin{eqnarray}
&& X^1_s = (X_{t-} + \widetilde{B}_{s})\,\bone_{\{\tau\geq t\}},\;\; s\in[0,\varepsilon], \\
&&X^n_s = (X_{t-} + \widetilde{B}_s - L^{n-1})\,\bone_{\{\tau\geq t\}},\;\;s\in[0,\varepsilon],\;\;n\ge2, \\
&& L^n
= \alpha \PP(\tau\geq t) - \alpha \PP\bigl(\tau\geq t,\,\inf_{s\in[0,\varepsilon]} X^n_s > 0 \bigr),
\;\; n\geq 1, \label{eq.PhysSolReg.Ln.def}
\end{eqnarray}
where $\widetilde{B}_s:=B_{t+s}-B_t$, $s\in[0,\varepsilon]$. (Note that $L^n$ does not depend on the time parameter $s$.)

It is easy to see that $X^2_s\leq X^1_s$, for all $s\in[0,\varepsilon]$, with probability one. Then, by induction, we conclude that the sequences $X^n_s$, $n\in\nn$ are non-increasing, for all $s\in[0,\varepsilon]$, with probability one.
Hence, by Lemma \ref{le:PhysSolReg.4} below, for $\varepsilon\in(0,\infty)$ sufficiently small, the sequence $L^n$, $n\in\nn$, (which is non-decreasing by \eqref{eq.PhysSolReg.Ln.def}) has a limit $\widetilde{L}$. 
Hence, the processes $X^n$, $n\in\nn$ converge uniformly on $[0,\varepsilon]$ to the process $\widetilde{X}$ satisfying 
\begin{equation}\label{eq.PhysSolReg.tildeY.def}
\widetilde{X}_s=(X_{t-} + \widetilde{B}_{s} - \widetilde{L})\,\bone_{\{\tau\geq t\}},\;\; s\in[0,\varepsilon],
\end{equation}
where $\widetilde{L} := \lim_{n\to\infty} L^n$.
Notice that $\inf_{s\in[0,\varepsilon]} X^n_s$, $n\in\nn$ tend almost surely to $\inf_{s\in[0,\varepsilon]} \widetilde{X}_s$.
Since the conditional distribution of the latter random variable, given $\{\tau\ge t\}$, has no atoms, we conclude that 
\begin{equation}
\lim_{n\rightarrow\infty}\,\PP\bigl(\tau\geq t,\,\inf_{s\in[0,\varepsilon]} X^n_s > 0\bigr)
= \PP\bigl(\tau\geq t,\,\inf_{s\in[0,\varepsilon]} \widetilde{X}_s > 0\bigr),
\end{equation}
which yields 
\begin{equation}\label{eq.PhysSolReg.tildeL.def}
\widetilde{L}
= \alpha \PP(\tau\geq t) - \alpha \PP\bigl(\tau\geq t,\,\inf_{s\in[0,\varepsilon]}\widetilde{X}_s > 0\bigr).
\end{equation}
By Lemma \ref{le:PhysSolReg.4}, there exist $C_L<\infty$ and $\varepsilon_0\in(0,\epsilon]$ such that
\begin{equation}\label{eq.PhysSolReg.tildeL.Holder}
\widetilde{L} \leq C_L\,\sqrt{\varepsilon},\quad\varepsilon\le\varepsilon_0.
\end{equation}

\smallskip

Combining (\ref{eq.PhysSolReg.tildeL.Holder}) and (\ref{eq.sec2.deltaLambda.est}) in the statement of Lemma 
\ref{le:PhysSolReg.3} below, and recalling that they hold for any $\varepsilon\in(0,\varepsilon_0]$, we conclude that
\begin{equation}
\label{eq:21}
\Lambda_{t+s} - \Lambda_{t} \leq C_L\,\sqrt{s},\;\;
s\in[0,\varepsilon_0].
\end{equation}
The statement of the proposition follows by repeating the above arguments for arbitrary $t'\in[t,t+\epsilon)$ in place of $t$ and recalling that $C_L$ and $\varepsilon_0$ can be chosen independently of $t'$.
\end{proof}

\begin{lemma}\label{le:PhysSolReg.4}
Let the assumptions of Proposition \ref{prop:Holder.1} hold. Then, there exist $C_L<\infty$ and $\varepsilon_0\in(0,\epsilon]$, depending only on $\alpha$, $\beta$, $\delta$, and $\|\rho(t,\cdot)\|_{L^\infty([0,\infty))}$, such that 
\begin{equation}
0\leq L^n \le C_L\,\sqrt{\varepsilon}, 
\end{equation}
for all $n\in\nn$ and all $\varepsilon\in(0,\varepsilon_0]$.
\end{lemma}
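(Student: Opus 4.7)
The strategy is to prove the claim by induction on $n$, exploiting the fact that the hypothesis of Proposition~\ref{prop:Holder.1} provides a strict-contraction factor $\beta<1$. First, since $\widetilde B$ is independent of $\mathcal{F}_{t}$, and hence of the pair $(X_{t-},\bone_{\{\tau\geq t\}})$, and since on $\{\tau\geq t\}$ one has $\inf_{s\in[0,\varepsilon]} X_s^n = X_{t-}-L^{n-1}-M$ with $M:=-\inf_{s\in[0,\varepsilon]}\widetilde B_s$ (adopting the convention $L^0=0$ to cover $n=1$), the definition \eqref{eq.PhysSolReg.Ln.def} rewrites as
\begin{equation*}
L^n \,=\, \alpha\,\PP\bigl(\tau\geq t,\; X_{t-}\le L^{n-1}+M\bigr) \,=\, \alpha\int_0^{\infty} F\bigl(L^{n-1}+m\bigr)\,\mu_M(\mathrm{d}m),
\end{equation*}
where $F(x):=\PP(\tau\ge t,\,X_{t-}\le x)$ and $\mu_M$ is the half-Gaussian law of $M$ with variance $\varepsilon$; in particular $\EE[M]=\sqrt{2\varepsilon/\pi}$.

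Next, the hypothesis of Proposition~\ref{prop:Holder.1} reads $F(x)\le (\beta/\alpha)\,x$ for $x\in[0,\delta]$, while integrating the density bound $F'\leq\|\rho(t,\cdot)\|_{L^\infty}$ beyond $\delta$ yields the global estimate $F(x)\le(\beta/\alpha)\,x + \|\rho(t,\cdot)\|_{L^\infty}\,(x-\delta)^+$ for every $x\ge 0$. Substituting this into the formula above, and using the elementary inequality $(L^{n-1}+M-\delta)^+\le (M-\delta/2)^+$ valid under the inductive assumption $L^{n-1}\le\delta/2$, I obtain
\begin{equation*}
L^n \,\leq\, \beta\,L^{n-1} \,+\, \beta\sqrt{2\varepsilon/\pi} \,+\, \alpha\,\|\rho(t,\cdot)\|_{L^\infty}\,\EE\bigl[(M-\delta/2)^+\bigr],
\end{equation*}
and a standard Gaussian tail computation bounds the last term by $C_1\sqrt{\varepsilon}\,e^{-\delta^2/(8\varepsilon)}$, with $C_1$ depending only on $\alpha$ and $\|\rho(t,\cdot)\|_{L^\infty}$. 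To close the induction, I pick any $C_L>\beta\sqrt{2/\pi}/(1-\beta)$ (which is finite because $\beta<1$) and choose $\varepsilon_0\in(0,\epsilon]$ so small that $C_L\sqrt{\varepsilon_0}\le\delta/2$ and $C_1 e^{-\delta^2/(8\varepsilon_0)}\le (1-\beta)\,C_L-\beta\sqrt{2/\pi}$; starting from $L^0=0$, a one-line induction then propagates $L^{n-1}\le C_L\sqrt{\varepsilon}$ to $L^n\le C_L\sqrt{\varepsilon}$, for every $n\in\nn$ and every $\varepsilon\in(0,\varepsilon_0]$.

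\textbf{Main obstacle.} The only delicate point is that the crude bound $F\le 1$ beyond $\delta$ would destroy the uniformity in $n$: keeping the recursion a genuine $\beta$-contraction requires replacing this trivial tail by an exponentially small one, which is precisely what the Gaussian concentration of the Brownian infimum $M$ supplies once we know inductively that $L^{n-1}\le\delta/2$. The $\sqrt{\varepsilon}$ scaling in the conclusion is then dictated by $\EE[M]=\sqrt{2\varepsilon/\pi}$, and the strict contraction factor $\beta<1$ prevents it from compounding along the iterations.
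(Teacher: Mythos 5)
Your proof is correct and follows essentially the same strategy as the paper's: the Brownian minimum's $\sqrt{\varepsilon}$-scaling supplies the additive term, the hypothesis supplies the contraction factor $\beta<1$, and an induction keeping $L^{n-1}$ below a fixed fraction of $\delta$ closes the recursion. The only implementation difference is that you substitute the global bound $F(x)\leq(\beta/\alpha)x+\|\rho(t,\cdot)\|_{L^\infty}(x-\delta)^+$ and control the excess with a Gaussian tail estimate, whereas the paper splits the integral at $y=L^{n-1}$ and applies the hypothesis $\int_0^{L^{n-1}}\rho(t,y)\,\mathrm{d}y\leq(\beta/\alpha)L^{n-1}$ directly, first showing $L^n\leq\delta$ inductively and then summing the resulting geometric series.
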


\begin{proof} 
We have the estimates 
\begin{equation} \label{eq.sec2.le1.eq1}
\begin{split}
0&\leq \PP(\tau\ge t) - \PP\bigl(\tau\geq t,\,\inf_{s\in[0,\varepsilon]} X^1_s > 0 \bigr)
 = \int_0^\infty \Big(1 - \PP\bigl(\inf_{s\in[0,\varepsilon]} \widetilde{B}_s > -y\bigr)\Big)\,\rho(t,y)\,\mathrm{d}y \\
&\leq 2\sqrt{\varepsilon} \int_0^{\infty} \Phi\left(- y\right)\,\rho(t,y\sqrt{\varepsilon})\,\mathrm{d}y
\leq 2\sqrt{\varepsilon}\,C_{\rho} \int_0^{\infty} \Phi\left(- y\right)\,\mathrm{d}y
=\sqrt{\varepsilon}\,C_\rho\,\sqrt{\frac{2}{\pi}}=: \sqrt{\varepsilon}\,C_0,
\end{split}
\end{equation}
where
$$
C_{\rho}:=\|\rho(t,\cdot)\|_{L^\infty},
$$
and $\Phi$ stands for the standard Gaussian cumulative distribution function.

For $n\geq 2$, we find
\begin{equation}\label{eq.PhysSolReg.4.eq1}
\begin{split}
\frac{1}{\alpha}L^n
&= \PP(\tau\ge t) - \PP\bigl(\tau\geq t,\,\inf_{s\in[0,\varepsilon]} X^n_s > 0 \bigr) \\
& = \int_0^\infty \Big(1-\PP\bigl(\inf_{s\in[0,\varepsilon]} \widetilde{B}_s - L^{n-1}> -y\bigr) \Big)\,\rho(t,y)\,\mathrm{d}y \\
& \leq 
\int_0^{L^{n-1}} \rho(t,y)\,\mathrm{d}y
+2\sqrt{\varepsilon} \int_0^{\infty} \Phi\left( - y\right)\,\rho(t,y\sqrt{\varepsilon} +L^{n-1})\,\mathrm{d}y
\\ 
& \leq 
\int_0^{L^{n-1}} \rho(t,y)\,\mathrm{d}y
+\sqrt{\varepsilon} \,C_{0}.
\end{split}
\end{equation}
Assume that $\varepsilon$ is sufficiently small, so that $\alpha C_{0} \sqrt{\varepsilon}\leq (1-\beta)\delta$.
Then, (\ref{eq.sec2.le1.eq1}) implies $L^1\leq \delta$ and, hence, by the main assumption in the statement of Proposition \ref{prop:Holder.1},
$$
\int_0^{L^{1}} \rho(t,y)\,\mathrm{d}y \leq \frac{\beta}{\alpha} L^1,
$$
and
$$
L^2 \leq \beta\, L^1 + \alpha C_{0} \sqrt{\varepsilon}
\leq \delta.
$$
Thus, by induction, $L^n\leq\delta$, for all $n$. Repeating the above estimate, we obtain
$$
L^n \leq \beta\, L^{n-1} + \alpha C_{0} \sqrt{\varepsilon},\quad n\ge 2,
$$
which yields 
$$
L^n \leq  \alpha\, C_0 \, \sqrt{\varepsilon} \left(1 + \frac{1}{1-\beta} \right),\quad n\ge 1
$$
and completes the proof of the lemma. 
\end{proof}

\begin{lemma}\label{le:PhysSolReg.3}
Let the assumptions of Proposition \ref{prop:Holder.1} hold. 
Then,
\begin{equation}
\label{eq.sec2.deltaLambda.est}
\Lambda_{t+s} - \Lambda_{t} \leq \widetilde{L},\;\;
s\in[0,\epsilon].
\end{equation}
\end{lemma}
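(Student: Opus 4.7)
The goal is to show $\Delta(s) := \Lambda_{t+s} - \Lambda_t \leq \widetilde{L}$ for all $s \in [0,\varepsilon]$, where I interpret the $\epsilon$ in the statement as the parameter $\varepsilon$ used to construct $\widetilde{X}$ and $\widetilde{L}$ in the proof of Proposition \ref{prop:Holder.1}. The plan is a bootstrap argument: I isolate the set $\{s \in [0,\varepsilon] : \Delta(s) \leq \widetilde{L}\}$ and show that if its supremum were strictly less than $\varepsilon$, a comparison with the ghost process $\widetilde{X}$ on that sub-interval would produce a strict inequality contradicting continuity at the supremum.

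I first observe that the hypothesis $\PP(\tau \geq s, X_{s-} \leq x) \leq (\beta/\alpha) x < x/\alpha$ for $x \in (0,\delta]$ and $s \in [t,t+\epsilon]$, combined with the physical jump condition \eqref{phys_cond}, forces $\Lambda_s - \Lambda_{s-} = 0$ at every $s \in [t,t+\epsilon]$. Hence $\Lambda$ is continuous on $[t,t+\epsilon]$, so $\Lambda_t = \Lambda_{t-}$, $\Delta$ is continuous and non-decreasing with $\Delta(0) = 0$, and on $\{\tau \geq t\}$ one has $X_{t+s} = X_{t-} + \widetilde{B}_s - \Delta(s)$. Next, define $S := \sup\{s \in [0,\varepsilon] : \Delta(s) \leq \widetilde{L}\}$, so that $\Delta(S) \leq \widetilde{L}$ by continuity; the claim reduces to $S = \varepsilon$. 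I dispose of the degenerate case $\widetilde{L} = \alpha\,\PP(\tau \geq t)$ separately, for which $\Delta(s) \leq \alpha\,\PP(\tau \geq t) = \widetilde{L}$ for all $s$; henceforth I assume $\widetilde{L} < \alpha\,\PP(\tau \geq t)$, equivalently $\PP(\tau \geq t, \inf_{r \in [0,\varepsilon]} \widetilde{X}_r > 0) > 0$.

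Suppose for contradiction $S < \varepsilon$. Continuity and monotonicity of $\Delta$ give $\Delta(S) = \widetilde{L}$. Since $\Delta(r) \leq \widetilde{L}$ for $r \in [0,S]$, $X_{t+r} \geq \widetilde{X}_r$ on $\{\tau \geq t\}$, so
\begin{equation*}
\Delta(S) = \alpha\,\PP\bigl(\tau \geq t,\,\inf_{r \in [0,S]} X_{t+r} \leq 0\bigr) \leq \alpha\,\PP\bigl(\tau \geq t,\,\inf_{r \in [0,S]} \widetilde{X}_r \leq 0\bigr).
\end{equation*}
The gap between this upper bound and $\widetilde{L} = \alpha\,\PP(\tau \geq t,\,\inf_{r \in [0,\varepsilon]} \widetilde{X}_r \leq 0)$ equals $\alpha\,\PP(\tau \geq t,\,\inf_{r \in [0,S]} \widetilde{X}_r > 0,\,\inf_{r \in [S,\varepsilon]} \widetilde{X}_r \leq 0)$. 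Under the standing assumption, $\PP(\tau \geq t, \inf_{r \in [0,S]} \widetilde{X}_r > 0) \geq \PP(\tau \geq t, \inf_{r \in [0,\varepsilon]} \widetilde{X}_r > 0) > 0$; conditional on this event, the independent Brownian increments $\widetilde{B}_{\cdot} - \widetilde{B}_S$ on the positive-length interval $[S,\varepsilon]$ drive $\widetilde{X}$ below zero with positive probability by the reflection principle applied to the fresh Brownian motion starting from $\widetilde{X}_S > 0$. Hence $\Delta(S) < \widetilde{L}$, contradicting $\Delta(S) = \widetilde{L}$, so $S = \varepsilon$.

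The main obstacle will be justifying the strict inequality at the end: showing that $\widetilde{X}$ has positive probability of surviving $[0,S]$ and then crossing zero on $(S,\varepsilon]$. Setting aside the degenerate case handles the survival half, and the crossing half is a standard Brownian first-passage fact, but making the conditioning precise (for instance, via the strong Markov property of $\widetilde{B}$ at the deterministic time $S$) requires some care.
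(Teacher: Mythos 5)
Your proof is correct and essentially reproduces the paper's argument: both proceed by contradiction at the first time $\Lambda_{t+\cdot}-\Lambda_t$ reaches $\widetilde L$, use the pathwise domination $X_{t+r}\ge\widetilde X_r$ on that sub-interval (valid because the hypothesis rules out jumps of $\Lambda$ on $[t,t+\epsilon]$), and close the gap via the positive-probability event on which $\widetilde X$ crosses zero on $[0,\varepsilon]$ while $X$ survives. Your rephrasing of the final step as a direct probability bound, rather than the paper's comparison of indicator functions, is cosmetic, and your reading of the $\epsilon$ in the lemma's display as the parameter $\varepsilon$ underlying $\widetilde L$ and $\widetilde X$ is the correct one (the paper later applies the lemma at $s=\varepsilon$ for each $\varepsilon\le\varepsilon_0$).
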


\begin{proof}
First, we notice that $\Lambda$ is continuous on $[t,t+\epsilon]$ and $\Lambda_{t-}=\Lambda_t$, due to the assumption of the proposition and \eqref{phys_cond}.
Suppose that there exists an $s\in[0,\epsilon]$ such that $\Lambda_{t+s}-\Lambda_t > \widetilde{L}$. Since $\widetilde{L}>0$, we must have $s>0$.
Due to the continuity of $\Lambda$ we can further find $s' \in [0,\epsilon)$ such that $\Lambda_{t+s'} - \Lambda_t = \widetilde{L}$ and $\Lambda_{t+s''}- \Lambda_t < \widetilde{L}$ for all $s''\in[0,s')$. Therefore, for any $s''\in[0,s']$, the definitions of $X$, $\widetilde{X}$, and the properties of Brownian motion, give
\begin{equation*}
\mathbf{1}_{\{\tau>t+s''\}} - \mathbf{1}_{\{\tau\geq t,\,\inf_{r\in[0,\varepsilon]}\widetilde{X}_r > 0\}} \geq 0,
\quad \PP\bigl(\mathbf{1}_{\{\tau>t+s''\}} - \mathbf{1}_{\{\tau\geq t,\,\inf_{r\in[0,\varepsilon]} \widetilde{X}_r > 0\}} >0 \bigr)>0.
\end{equation*}
Taking $s''=s'$ and taking expectations in the left inequality, we end up with $\Lambda_{t+s'} - \Lambda_t < \widetilde{L}$, which is the desired contradiction.
\end{proof}

\subsection{H\"older continuity of the marginal density at zero.}
\label{subse:regLambda.1}
\smallskip
Finally, we show that the 1/2-H\"older continuity of $\Lambda$ implies the H\"older continuity of the marginal density at $0$.

\begin{proposition}
\label{prop:reg:Holder}
Fix an arbitrary $t\geq0$ and assume that there exists an $\epsilon>0$ such that $\Lambda$ is 1/2-H\"older continuous in $(t,t+\epsilon)$.
Then, for any $\eta\in(0,\epsilon/2)$, there exist constants $C \geq 0$ and $\chi \in (0,1)$ such that
\begin{equation*}
p(s,x) \leq C x^{\chi}
\end{equation*}
holds for all $s \in [t+\eta,t+\epsilon-\eta]$ and almost every $x>0$.
\end{proposition}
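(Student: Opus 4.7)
The plan is to reinterpret $p$ in ``lab coordinates'' and establish boundary H\"older regularity for the heat equation with $1/2$-H\"older moving boundary via a Krylov--Safonov-type oscillation estimate. To this end, I would set $u(s,x) := p(s, x - \Lambda_s)$ for $s \in (t, t+\epsilon)$ and $x > \Lambda_s$, extended by zero below the boundary. Probabilistically, $u(s, x)$ is the density of $\hat X_s := X_{0-} + B_s$ restricted to the event $\{\hat X_r > \Lambda_r,\ 0 \leq r \leq s\}$; hence $u$ is non-negative, bounded by $C_\rho := \|\rho(t, \cdot)\|_{L^\infty}$, vanishes on the moving boundary $\{x = \Lambda_s\}$, and agrees a.e.\ with a weak solution of $\partial_s u = \tfrac12 \partial_{xx} u$ on the open parabolic domain $\Omega := \{(s, x) : t < s < t+\epsilon,\ x > \Lambda_s\}$ (this follows from the standard transition-density representation for Brownian motion killed at a continuous moving barrier, and requires only the continuity of $\Lambda$ granted by Lemma \ref{le:PhysSolReg.3}).

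For any fixed $s^* \in [t+\eta, t+\epsilon - \eta]$, consider the parabolic cylinders $Q_r := (s^* - r^2, s^*] \times (\Lambda_{s^*} - Ar, \Lambda_{s^*} + Ar)$ with $A := C_L + 1$, where $C_L$ is the $1/2$-H\"older constant of $\Lambda$ on $(t, t+\epsilon)$ granted by Proposition \ref{prop:Holder.1}, and $r \in (0, r_0)$ for some $r_0$ depending only on $\eta, \epsilon, C_L$. The H\"older bound $|\Lambda_s - \Lambda_{s^*}| \leq C_L r$ on $(s^* - r^2, s^*]$ ensures that the set $Q_r \setminus \Omega = Q_r \cap \{x \leq \Lambda_s\}$, on which $u \equiv 0$, has parabolic Lebesgue measure at least $\sigma\,|Q_r|$ for some $\sigma > 0$ depending only on $C_L$, uniformly in $r$ and $s^*$. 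A Krylov--Safonov-type $L^\varepsilon$/weak-Harnack estimate \cite{KrySaf} for bounded non-negative caloric functions vanishing on a set of uniformly positive parabolic density then yields $\theta \in (0, 1)$, depending only on $C_L$, such that $\|u\|_{L^\infty(Q_{r/2})} \leq \theta\,\|u\|_{L^\infty(Q_r)}$ for all $r \in (0, r_0)$.

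Iterating along $r_n := r_0 / 2^n$ gives $\|u\|_{L^\infty(Q_{r_n})} \leq \theta^n C_\rho$, which upgrades to $\|u\|_{L^\infty(Q_r)} \leq C r^\chi$ for all $r \in (0, r_0)$, with $\chi := -\log_2 \theta \in (0, 1)$ and $C$ depending only on $C_\rho, C_L, r_0$. By a Lebesgue-differentiation argument applied to the time slice $s = s^*$, this translates, for a.e.\ small $x > 0$, into $p(s^*, x) = u(s^*, x + \Lambda_{s^*}) \leq C' x^\chi$, with the constants uniform in $s^* \in [t+\eta, t+\epsilon - \eta]$, as required. The main obstacle is the oscillation decay step at a merely $1/2$-H\"older (not Lipschitz) moving boundary: standard boundary estimates for parabolic equations typically require more boundary regularity, so one has to exploit the uniformly positive parabolic density of $\{u = 0\} \cap Q_r$ directly. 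A concrete self-contained alternative is to construct an explicit supersolution barrier $W(s, x) := M (x - \underline{\Lambda}_{s^*, r})^\chi$, where $\underline{\Lambda}_{s^*, r} := \inf_{[s^* - r^2, s^*]} \Lambda$; since $W$ is time-independent and concave in $x$ for $\chi \in (0, 1)$, it satisfies $\partial_s W - \tfrac12 \partial_{xx} W \geq 0$, and comparison against $u$ on the parabolic boundary of $Q_r$ via the parabolic maximum principle (with $M$ proportional to $\|u\|_{L^\infty(Q_r)}$ and an auxiliary contribution handling the initial time slice) reproduces the oscillation decay without invoking Krylov--Safonov directly.
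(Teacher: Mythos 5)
Your proposal captures the same essential mechanism as the paper's proof: the $1/2$-H\"older control on $\Lambda$ forces the zero set of the (killed) density to occupy a uniform fraction of each parabolic cylinder touching the boundary, which drives a geometric oscillation decay and hence a H\"older bound. But the technical route is genuinely different: you pass to \emph{lab coordinates}, where the killed density $u(s,x)=p(s,x-\Lambda_s)$ solves the drift-free heat equation on the moving domain $\{x>\Lambda_s\}$, whereas the paper stays in the moving frame, where $p$ solves a Fokker--Planck equation with drift $\dot{\Lambda}$; since $\dot{\Lambda}$ is not yet known to exist, the paper mollifies $\Lambda\rightsquigarrow\Lambda^n$, proves the decay for $p^n$ via a Feynman--Kac/exit-time argument, and passes $n\to\infty$. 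Your lab-coordinate formulation neatly avoids the drift term and the reason the paper gives for mollifying, which is a real simplification in principle.

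The gap is in the oscillation-decay step itself. Both the Krylov--Safonov ``measure-to-point'' estimate (applied to $\|u\|_\infty-u$, or to $u$ extended by zero) and the barrier comparison require that $u$, extended by zero below $\Lambda$, be a genuine subsolution across $\{x=\Lambda_s\}$, or equivalently that $u$ be continuous up to the boundary with $u(s,\Lambda_s+)=0$. At this stage of the argument that is precisely what is \emph{not} known: $p(s,\cdot)$ is only a measurable density and no boundary continuity has been established. Your barrier version has the same issue twice over: the parabolic maximum principle needs $u\le W$ on the lateral piece $\{x=\Lambda_s\}$, which presupposes pointwise boundary values of $u$; and, as you acknowledge, $W(s^*-r^2,\cdot)=M(\cdot-\underline{\Lambda}_{s^*,r})^\chi$ vanishes at the left endpoint of the initial slice (since $\underline{\Lambda}_{s^*,r}$ may equal $\Lambda_{s^*-r^2}$), so without the unspecified ``auxiliary contribution'' the comparison fails exactly where it matters. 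The paper's mollification is not only a device to make $\dot\Lambda^n$ exist; it is also what makes $p^n$ a classical solution continuous up to the (now smooth) boundary, so that the Feynman--Kac identity \eqref{eq:feynman:kac:p} and the stopping-time argument are rigorous, and the uniform decay then survives the limit $n\to\infty$. To repair your argument within the lab-coordinate framework, the most economical route is to keep your key idea but borrow the mollification: replace $\Lambda$ by $\Lambda^n$ uniformly $1/2$-H\"older, let $u^n$ be the Brownian density killed at $\{x=\Lambda^n_s\}$ (continuous up to the smooth boundary, vanishing there), run either your KS estimate or a properly patched barrier (e.g.\ adding a caloric term that dominates $\|u^n\|_\infty$ on $\{s=s^*-r^2\}$ and decays inside) for $u^n$, and then pass $n\to\infty$ as in the paper's Third Step.
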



\begin{proof}
As before, we assume without loss of generality that ${\mathbb P}(\tau>s)>0$ holds for all $s\geq0$.

\textit{First Step}.
The strategy is based upon Krylov and Safonov estimates, as implemented in the proof of Lemma 5.5 in \cite{DIRT2}. 
However, there is a significant difference with the proof in \cite{DIRT2} since, at this stage, the function $\Lambda$ is not known to be differentiable on $(t,t+\epsilon)$: we only know that it is $1/2$-H\"older continuous.
To overcome the lack of differentiability of $\Lambda$, we use the following mollification argument. 
For every $n \geq 1$, we 
choose $\Lambda^n$ as an increasing smooth process on $[0,\infty)$, starting from $0$ at time $0$, such that, 
for any $s \in (t,t+\epsilon)$, $\lim_{n \rightarrow \infty} \Lambda_{s}^n = \Lambda_{s}$, the convergence being uniform on any compact subset of $(t,t+\epsilon)$. Without any loss of generality, we can assume that 
the processes $\{\Lambda^n\}_{n \geq 1}$ are uniformly $1/2$-H\"older continuous on $[t+\eta/2,t+\epsilon-\eta/2]$.

	
Then, for any $n \geq 1$, we let
\begin{equation}
\label{eq:Holder:proof:Xn}
X_{s}^n := X_{t+\eta/2} - \bigl( \Lambda^n_{s} - \Lambda^n_{t+\eta/2}\bigr)  + B_{s} - B_{t+\eta/2}, \quad s \in [t+\eta/2,t+\epsilon-\eta/2],
\end{equation}
together with $\tau^n:= \inf\{ s \geq t+\eta/2 : X_{s}^n \leq 0\}$. 
Using standard arguments, it is easy to deduce that, for any $s \in (t+\eta/2,t+\epsilon-\eta/2)$, the restriction of the distribution of $X_{s \wedge \tau^n}^n$ to $(0,\infty)$
admits a (sub-)density $p^n(s,\cdot)$, which satisfies the Fokker-Planck equation:
\begin{equation}
\label{eq:fokker:planck:regularized}
\partial_{s} p^n- \frac12 \partial_{xx} p^n - \dot{\Lambda}^n \partial_{x} p^n = 0, \quad 
(s,x) \in (t+\eta/2,t+\epsilon-\eta/2)\times(0,\infty),
\end{equation}
where $\dot{\Lambda}^n$ is the time derivative of the regularized function $\Lambda^n$. Recall that 
$$
p^n\in C^{1,2}\left((t+\eta/2,t+\epsilon-\eta/2) \times (0,\infty)\right)
$$
and is continuous on $(t+\eta/2,t+\epsilon-\eta/2) \times [0,\infty)$, 
with the Dirichlet boundary condition $p^n(s,0)=0$ for $s \in (t+\eta/2,t+\epsilon-\eta/2)$.

\medskip
	
\textit{Second Step}.
We now prove that, for any $\eta\in(0,\epsilon/2)$, there exist two positive constants 
$C$ and $\chi$ such that, for any $n \geq 1$, $s \in [t+\eta,t+\epsilon-\eta]$, and $x >0$,  
\begin{equation}
\label{eq:bound:holder:decay}
p^n(s,x) \leq C \min(1,x^\chi).
\end{equation}
In order to prove \eqref{eq:bound:holder:decay}, we fix arbitrary $t_{0} \in (t+\eta/2,t+\epsilon-\eta/2]$ and $x_{0} >0$, and consider, for any $n \geq 1$, the process $Y^n$ given by:
\begin{equation*}
\mathrm{d}Y_{s}^n = \dot{\Lambda}_{t_{0}-s}^n\,\mathrm{d}s + \mathrm{d}B_{s}, \quad s \in [0,t_0-t-\eta/2],
\end{equation*}
with $Y_{0}^n=x_{0}$ as the initial condition. 	
Using \eqref{eq:fokker:planck:regularized}, we deduce from It\^o's formula that 
\begin{equation}
\label{eq:feynman:kac:p}
p^n(t_{0},x_{0}) = {\mathbb E}
\bigl[ p^n (t_{0} - \varrho^n ,Y^n_{\varrho^n})\bigr]
= {\mathbb E} \bigl[ p^n(t_{0} - \varrho^n,Y^n_{\varrho^n}) {\mathbf 1}_{\{Y^n_{\varrho^n} >0\}} \bigr],
\end{equation}
where $\varrho^n$ is any stopping time not exceeding $\varrho_{0}^n \wedge \delta^2$,
with an arbitrary fixed $\delta^2 \in (0,t_{0}-t-\eta/2)$ and with $\varrho_{0}^n:=\inf\{ s>0 : Y_{s}^n \leq 0\}$. 

\smallskip	

Next, we consider another free parameter $L \geq 1$, whose value is determined below in terms of $\eta$, and choose
$\varrho^n = \inf\{s > 0 : Y_{s}^n \geq L \delta \}\wedge \varrho^n_0 \wedge \delta^2$. Then, \eqref{eq:feynman:kac:p} yields
\begin{equation*}
p^n(t_{0},x_{0}) \leq \bigl( 1 - {\mathbb P}(Y^n_{\varrho^n}=0) \bigr) 
\sup_{(s,y) \in {\mathcal Q}(\delta,L)} p^n(s,y),
\end{equation*}
with 
\begin{equation*}
{\mathcal Q}(\delta,L) := [t_{0} - \delta^2,t_{0}] \times [0,L \delta]. 
\end{equation*}
Denoting by $\kappa$ a common $1/2$-H\"older bound of the paths $(\Lambda^n)_{n \geq 1}$ on $[t+\eta/2,t+\epsilon-\eta/2]$, we have, for any $s \in [0,\varrho^n]$ and $x_0\leq \delta$,
\begin{equation*}
Y_{s}^n \leq \delta + \kappa \delta + B_s.
\end{equation*}
Therefore,
\begin{equation*}
\begin{split}
\{Y_{\varrho^n}^n = 0\} \supset &\Bigl\{ \inf_{0 \leq s \leq \delta^2}  B_{s}  < -(1+\kappa) \delta \Bigr\} 
\cap
\Bigl\{ \sup_{0 \leq s \leq \delta^2}  B_{s}  < (L - (1+\kappa)) \delta \Bigr\}.
\end{split}
\end{equation*}
Choosing $L=2(1+\kappa)$, we easily deduce by a scaling argument that there exists a constant $c \in (0,1)$ only depending on 
$\kappa$ (in particular, $c$ is independent of $n$, $\delta$, $t_{0}$ and $x_{0}$) such that 
\begin{equation*}
{\mathbb P} \bigl( Y_{\varrho^n}^n = 0 \bigr) \geq c,
\end{equation*}
from which we conclude that 
\begin{equation*}
p^n(t_{0},x_{0}) \leq ( 1 - c ) 
\sup_{t_{0}-\delta^2 \leq s \leq t_{0},\, 0 \leq y \leq L \delta} p^n(s,y). 
\end{equation*}
The above holds true under the sole assumption that $x_{0} \leq \delta$ and $t+\eta/2+\delta^2 < t_{0} \leq t+\epsilon - \eta/2$. 
Assuming $2\eta< t_{0}-t$ (notice that we can always make $\eta$ arbitrarily small) and iterating the above estimates, we deduce that, 
as long as $\delta^2(1+L^2+\cdots+L^{2k}) \leq \eta$, we have
\begin{equation*}
p^n(t_{0},x_{0}) \leq ( 1 - c )^{k+1} 
\sup_{t_{0}-(1+L^2+\cdots+L^{2k})\delta^2 \leq s \leq t_{0},\, 0 \leq y \leq \delta L^{k+1}} p^n(s,y).
\end{equation*} 
Hence, as long as $\delta^2 L^{2(k+1)} \leq \eta$, we have
\begin{equation*}
p^n(t_{0},x_{0}) \leq ( 1 - c )^k 
\sup_{(s,y) \in [t+\eta,t+\epsilon-\eta/2] \times [0,\sqrt{\eta}]} p^n(s,y). 
\end{equation*} 
That is, the above bound holds true if 
$k+1 \leq \ln(\delta^{-2} \eta)/\ln(L^2)$, which leads to 
(choosing $k+1 = \lfloor \ln(\delta^{-2} \eta)/\ln(L^2) \rfloor$) 
\begin{equation*}
\begin{split}
p^n(t_{0},x_{0}) &\leq (1-c)^{-2} ( 1 - c )^{\ln(\delta^{-2} \eta)/\ln(L^2)} 
\sup_{(s,y) \in [t+\eta,t+\epsilon-\eta/2] \times [0,\sqrt{\eta}]} p^n(s,y)
\\
&= (1-c)^{-2} \bigl( \delta/\sqrt{\eta} \bigr)^{\chi} 
\sup_{(s,y) \in [t+\eta,t+\epsilon-\eta/2] \times [0,\sqrt{\eta}]} p^n(s,y),
\end{split}
\end{equation*} 
with $\chi := -  \ln(1-c)/\ln(L)$. For $x_{0} \in (0,\sqrt{\eta})$, we can choose $\delta = x_{0}$, which yields
\begin{equation*}
\begin{split}
p^n(t_{0},x_{0}) \leq C\, x_{0}^{\chi} 
\sup_{(s,y) \in [t+\eta,t+\epsilon-\eta/2] \times [0,\sqrt{\eta}]} p^n(s,y),
\end{split}
\end{equation*}  
for a constant $C$ only depending on $\eta$. 
	
	
In order to complete the proof of \eqref{eq:bound:holder:decay}, it suffices to provide a bound for 
$$
\sup_{(s,y) \in [t+\eta,t+\epsilon-\eta/2] \times [0,\infty)} p^n(s,y),
$$ 
uniformly over $n \geq 1$. 
The latter follows easily from the following observation. For any Borel $A\subset(0,\infty)$ and $s\geq t+\eta/2$, 
\begin{equation*}
\begin{split}
{\mathbb P} ( X_{s \wedge \tau^n}^n \in A) \leq {\mathbb P} \bigl( X_{t+\eta/2} - (\Lambda^n_{s} - \Lambda^n_{t+\eta/2}) + B_{s} - B_{t+\eta/2} \in A  \bigr),
\end{split}
\end{equation*}
and the latter is clearly less or equal to $1/\sqrt{2 \pi (s-t-\eta/2)} \vert A \vert$, where $\vert A \vert$ stands for the Lebesgue measure of $A$.
We deduce that $p^n(s,\cdot) \leq 1/\sqrt{2 \pi (s-t-\eta/2)}$, which completes the proof of \eqref{eq:bound:holder:decay}. 

\medskip
	
\textit{Third Step}.
In order to complete the proof, it remains to take the limit as $n \rightarrow \infty$. 
Recall \eqref{eq:Holder:proof:Xn}, together with the identity
\begin{equation*}
X_{s} = X_{t+\eta/2} - \bigl( \Lambda_{s} - \Lambda_{t+\eta/2}) + \bigl(B_{s}-B_{t+\eta/2}\bigr), \quad s \in [t+\eta/2,t+\epsilon-\eta/2].
\end{equation*} 
Since $(\Lambda^n)_{n \geq 1}$ converges to $\Lambda$ uniformly on $[t+\eta/2,t+\epsilon-\eta/2]$, we deduce that, for any 
$\eta \in (0,\epsilon)$, the sequence of laws $(\PP \circ ((X^n_{s})_{s \in [t+\eta/2,t+\epsilon - \eta/2]})^{-1})_{n \geq 1}$ (seen as probability measures on $C([t+\eta/2,t+\epsilon-\eta/2];\RR)$) converges in the weak sense to 
$\PP \circ ((X_{s})_{s \in [t+\eta/2,t+\epsilon- \eta/2]})^{-1}$. Since the process $X$ goes (with probability 1) into the negative when touching $0$, we deduce that 
$(\PP \circ ((X^n_{s})_{s \in [t+\eta/2,t+\epsilon - \eta/2]},\tau^n)^{-1})_{n \geq 1}$ (seen as probability measures on $C([t+\eta/2,t+\epsilon-\eta/2];{\mathbb R}) \times {\mathbb R})$ converges weakly to 
$\PP \circ ((X_{s})_{s \in [t+\eta/2,t+\epsilon- \eta/2]},\tau)^{-1}$.
Hence, for any $s \in [t+\eta/2,t+\epsilon-\eta/2]$ and for any bounded and continuous real-valued function $\varphi$ on $\RR$, with support in $(0,\infty)$,
\begin{equation*}
\lim_{n \rightarrow \infty} {\mathbb E} \bigl[ \varphi \bigl(X_{s \wedge \tau^n}^n\bigr) \bigr]
= {\mathbb E} \bigl[ \varphi \bigl( X_{s \wedge \tau} \bigr) \bigr]
= 
 \int_{0}^{\infty} \varphi \bigl(x\bigr) p(s,x)\,\mathrm{d}x. 
\end{equation*}
From the above, we easily deduce that $p$ inherits the bound \eqref{eq:bound:holder:decay}, which completes the proof. 
\end{proof}

Combining Propositions \ref{prop:sec2.p0.upperbd}, \ref{prop:Holder.1}, and \ref{subse:regLambda.1}, we obtain the following corollary, which summarizes the results of this section.

\begin{corollary}
\label{cor:2.6}
Fix an arbitrary $t\geq0$ and assume that $\rho(t,\cdot)$ satisfies at least one of the following two conditions: 
(i) $\lim_{\eta \downarrow 0} \esssuptext{x \in (0,\eta)} \rho(t,x) <1/\alpha$, or (ii) 
$\rho(t,\cdot)$
has a version that is locally monotone in a right neighborhood of any point in $[0,\infty)$.
Then, there exists an $\epsilon>0$ such that $\Lambda$ is 1/2-H\"older continuous on $(t,t+\epsilon)$ and $p(s,\cdot)$ (has a {version} that) is vanishing and H\"older continuous at $0$, uniformly over $s$ in any compact sub-interval of $(t,t+\epsilon)$.
\end{corollary}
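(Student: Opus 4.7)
The plan is to chain together the three propositions of this section in order, paying close attention to the domains on which each applies. Under either hypothesis (i) or (ii), I will first invoke Proposition \ref{prop:sec2.p0.upperbd} to obtain constants $\epsilon_0, \delta > 0$ and a function $\beta : (0,\epsilon_0) \to [0,1)$ such that
$$
\PP(\tau \geq s, X_{s-} \leq x) \leq \frac{\beta(z)}{\alpha}\, x, \quad x \in [0,\delta],\ s \in [t+z, t+\epsilon_0],
$$
for every $z \in (0, \epsilon_0)$.

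Next, fixing any $z \in (0,\epsilon_0)$ and applying Proposition \ref{prop:Holder.1} with starting time $t+z$ in place of $t$ and the constant $\beta = \beta(z) \in [0,1)$, I obtain $1/2$-H\"older continuity of $\Lambda$ on $[t+z, t+\epsilon_0]$. Sending $z \downarrow 0$, this delivers $1/2$-H\"older continuity of $\Lambda$ on every compact sub-interval of $(t, t+\epsilon)$, where $\epsilon := \epsilon_0$; equivalently, H\"older continuity on the open interval $(t, t+\epsilon)$ (the H\"older constant may deteriorate as $s \downarrow t$, but this is consistent with the statement).

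For the boundary behavior of $p$, I will select any compact interval $[a,b] \subset (t, t+\epsilon)$, choose $t' \in (t, a)$, and note that $\Lambda$ is $1/2$-H\"older continuous on some open neighborhood $(t', t'+\epsilon')$ containing $[a,b]$. Applying Proposition \ref{prop:reg:Holder} with $t'$ in place of $t$ and with $\eta > 0$ small enough that $[a,b] \subset [t'+\eta, t'+\epsilon'-\eta]$, I extract the desired bound $p(s,x) \leq C x^{\chi}$ valid for $s \in [a,b]$ and almost every $x > 0$, which produces a version of $p(s,\cdot)$ that vanishes and is H\"older continuous at $0$ uniformly in $s \in [a,b]$.

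There is no genuine mathematical difficulty here, only bookkeeping of quantifiers. The sole subtleties are that Proposition \ref{prop:Holder.1} requires the \emph{constant} $\beta$ rather than a function, which forces me to shrink the domain to $[t+z, t+\epsilon_0]$ for each $z > 0$ (and hence to lose the left endpoint $t$), and that Proposition \ref{prop:reg:Holder} retains an inner buffer $\eta > 0$ away from the endpoints, which is precisely why the H\"older control on $p$ can only be stated uniformly on compact sub-intervals of $(t, t+\epsilon)$.
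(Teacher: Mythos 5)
Your proposal is correct and matches the paper's (implicit) argument, which simply chains Propositions \ref{prop:sec2.p0.upperbd}, \ref{prop:Holder.1} and \ref{prop:reg:Holder}. The bookkeeping you supply — applying Proposition \ref{prop:Holder.1} at each shifted starting time $t+z$ with constant $\beta(z)$, and then applying Proposition \ref{prop:reg:Holder} on a slightly retracted window $(t',t'+\epsilon')$ to hit a given compact $[a,b]$ — is exactly what is needed to make the statement precise; your observation that, in case (ii), the H\"older constant may deteriorate as one approaches $t$ is consistent with the paper (indeed Remark~\ref{rmk.conn1} and Remark 1.3 make clear that $\Lambda$ is in general \emph{not} $1/2$-H\"older right-continuous at $t$ when $\lim_{x\downarrow 0}\rho(t,x)=1/\alpha$, and Corollary~\ref{cor:3.1} only ever invokes Corollary~\ref{cor:2.6} at times where case (i) holds, so that a uniform constant is available there).
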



\section{Lipschitz and Higher Order Regularity}
\label{se:3}

In this section, we keep the same notation as in the previous one: $X=(X_{t})_{t \geq 0}$ is a physical solution of  \eqref{Stefan_prob}; $p(t,\cdot)$ is the density of the restriction of the distribution of $X_{t} \, \bone_{\{\tau\geq t\}}$ to $(0,\infty)$; and $\rho(t,\cdot)$ is the density of the restriction of the distribution of $X_{t-} \,  
\bone_{\{\tau\geq t\}}$ to $(0,\infty)$. 

Our objective is to provide further regularity properties of $p$: Lipschitz property at the boundary, regularity of the gradient up to the boundary and (a form of) smoothness/analyticity inside the domain. Throughout the section, we use repeatedly the following notation: 
For a given $T>0$ (which shall be understood as a finite time horizon) and a given $t \in [0,T)$, we let 
$$T_{reg}(t) := \inf \bigl\{ s > t : \lim_{\eta \downarrow 0} \esssupmath{y \in (0,\eta)}  \ \rho(s,y) >0 \bigr\} \wedge T.$$
As a consequence of Corollary 
\ref{cor:2.6}, we deduce:

\begin{corollary}
\label{cor:3.1}
Fix an arbitrary time horizon $T>0$, together with 
a time $t \in [ 0,T)$, and assume that $\rho(t,\cdot)$ has a version that is
locally monotone in a right neighborhood of any point in $[0,\infty)$. Then, 
$T_{reg}(t) >t$. Moreover, for any $\eta \in (0,(T_{reg}(t)-t)/2)$, $\Lambda$ is 
$1/2$-H\"older continuous on $[t+\eta,T_{reg}(t)-\eta]$
and there exist a constant $C_{t,T,\eta} \geq 0$ and an exponent $\chi_{t,T,\eta}>0$ such that 
\begin{equation*}
p(s,x) \leq C_{t,T,\eta} \min( x^{\chi_{t,T,\eta}},1), \quad s \in [t+\eta,T_{reg}(t)-\eta], \ x \geq 0.
\end{equation*}
\end{corollary}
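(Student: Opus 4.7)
The plan is to apply Corollary \ref{cor:2.6} in a bootstrap fashion: once at time $t$ (under hypothesis (ii)) to obtain an initial right-neighborhood of $t$ with the desired regularity and to deduce $T_{reg}(t) > t$, then at every $s \in (t, T_{reg}(t))$ (where hypothesis (i) becomes automatic) to obtain local regularity that can be glued by compactness over $[t+\eta, T_{reg}(t) - \eta]$.

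Applying Corollary \ref{cor:2.6} at $t$ with hypothesis (ii), I obtain some $\epsilon_{0} > 0$ such that $\Lambda$ is $1/2$-H\"older continuous on $(t, t+\epsilon_{0})$ and $p(\cdot,\cdot)$ has a version vanishing and H\"older continuous at $0$, uniformly on compact sub-intervals of $(t, t+\epsilon_{0})$. The continuity of $\Lambda$ on $(t, t+\epsilon_{0})$ implies $X$ has no jumps there, so $X_{s-}=X_{s}$ a.s.\ and hence $\rho(s,\cdot)=p(s,\cdot)$ a.e.\ on $(0,\infty)$ for all $s \in (t, t+\epsilon_{0})$. Consequently $\lim_{\eta\downarrow 0} \esssupmath{y\in(0,\eta)} \rho(s,y)=0$ on $(t, t+\epsilon_{0})$, and the very definition of $T_{reg}(t)$ yields $T_{reg}(t) \geq \min(t+\epsilon_{0},T) > t$.

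For every $s \in (t, T_{reg}(t))$, the definition of $T_{reg}(t)$ forces $\lim_{\eta\downarrow 0} \esssupmath{y\in(0,\eta)} \rho(s,y) = 0 < 1/\alpha$, which is hypothesis (i) of Corollary \ref{cor:2.6} at $s$; applying it produces an open interval $I_{s} \subset (t, T_{reg}(t))$ containing (a right neighborhood of) $s$ on which $\Lambda$ is $1/2$-H\"older continuous and $p$ satisfies a bound $p(r,x) \leq C_{s}\min(x^{\chi_{s}},1)$ uniformly on compact sub-intervals of $I_{s}$. The compact set $[t+\eta, T_{reg}(t)-\eta] \subset (t, T_{reg}(t))$ is covered by this family of intervals, and a finite subcover yields closed sub-intervals $J_{1},\ldots,J_{N}$ whose union still covers $[t+\eta, T_{reg}(t)-\eta]$ and which carry uniform $1/2$-H\"older bounds $\kappa_{i}$ for $\Lambda$ and uniform bounds $p(s,x) \leq C_{i}\min(x^{\chi_{i}},1)$ for $p$. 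Setting $\chi_{t,T,\eta} := \min_{i}\chi_{i}$ and $C_{t,T,\eta} := \max_{i} C_{i}$, the desired bound on $p$ follows (using $x^{\chi_{i}} \leq x^{\chi_{t,T,\eta}}$ for $x\in[0,1]$ and $\chi_{i}\geq\chi_{t,T,\eta}$), and the uniform $1/2$-H\"older bound on $\Lambda$ over $[t+\eta, T_{reg}(t)-\eta]$ follows by a standard patching argument combined with the global boundedness $\Lambda\leq\alpha$ to absorb widely-separated pairs.

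The main obstacle is this last compactness step, because Corollary \ref{cor:2.6} only provides uniform control on \emph{compact} sub-intervals of the (open) neighborhoods $I_{s}$ it produces. One therefore has to slightly shrink each $I_{s}$ before extracting the finite subcover, and verify that the shrunken cover still reaches the endpoints $t+\eta$ and $T_{reg}(t)-\eta$; in particular, one must ensure that the chain of intervals $I_{s}$ can be made to reach the right endpoint without the available $\epsilon_{s}$'s collapsing to zero, which is where one genuinely uses the fact that $T_{reg}(t)-\eta$ still lies strictly inside $(t, T_{reg}(t))$. Once this bookkeeping is done, the corollary is a direct consequence of applying Corollary \ref{cor:2.6} point by point on $(t, T_{reg}(t))$, with the "opening" application at $t$ being the only place where hypothesis (ii) is actually used.
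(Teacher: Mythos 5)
Your proposal is correct and follows essentially the same route as the paper's proof: invoke Corollary \ref{cor:2.6} once under hypothesis (ii) at $t$ to deduce $T_{reg}(t)>t$, then observe that hypothesis (i) holds automatically (with limit $0$) at every $s\in(t,T_{reg}(t))$, apply Corollary \ref{cor:2.6} again at each such $s$, and glue via compactness. The bookkeeping concern you raise at the end is precisely what the paper handles by applying Corollary \ref{cor:2.6} at points of the slightly larger interval $[t+\eta/2,T_{reg}(t)-\eta/2]$ and then extracting a finite subcover of the strictly smaller compact $[t+\eta,T_{reg}(t)-\eta]$.
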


\begin{proof}
Fix $\eta$ as in the statement and identify $\rho(t,\cdot)$ with its locally monotone version.
{It is an immediate consequence of  
Corollary \ref{cor:2.6}
that $T_{reg}(t) >t$. Moreover,}
by construction, we know that, for any $s \in [t+\eta/2,T_{reg}(t)-\eta/2]$,  
{$\lim \esssuptext{y \downarrow 0} \rho(s,y)=0$}. 
Therefore, by Corollary \ref{cor:2.6} {again},
for any $s \in [t+\eta/2,T_{reg}(t)-\eta/2]$, 
we can find three constants $C_{s},\varepsilon_{s}>0$ and $\chi_{s} \in (0,1)$ such that $\Lambda$ is 
$1/2$-H\"older continuous on $(s,s+\varepsilon_{s})$, the H\"older semi-norm being less than $C_{s}$, and $p$ satisfies
\begin{equation*}
p(r,x) \leq C_{s} \min(x^{\chi_{s}},1), \quad r \in (s,s+\varepsilon_{s}), \ x \geq 0. 
\end{equation*}
{By compactness,} we can find $N \geq 1$, $s_{1},\ldots,s_{N} \in [t+\eta/2,T_{reg}-\eta/2]$ so that $[t+\eta,T_{reg}-\eta] \subset \cup_{i=1}^N (s_{i},s_{i}+\varepsilon_{s_{i}})$. It remains to let 
\begin{equation*}
C = \max_{i = 1,\ldots,N} C_{s_{i},T,\eta}, \quad \chi = \inf_{i=1,\ldots,N} \chi_{s_{i},T,\eta}. 
\end{equation*}
We easily deduce that $\Lambda$ is $1/2$-H\"older continuous on $[t+\eta,T_{reg}(t)-\eta]$, 
the H\"older semi-norm being less than $C$, and that $p$ satisfies 
$p(r,x) \leq C \max (1,x^{\chi})$, $r \in [t+\eta,T_{reg}(t)-\eta]$, $x \geq 0$. 
\end{proof}

\subsection{Lipschitz regularity}
\label{subse:3.1}
We now prove
\begin{proposition}
	\label{prop:reg:barrier}
	Fix an arbitrary time horizon $T>0$ together with a time $t \in [0,T)$ and assume that (a version of) $\rho(t,\cdot)$ is 
locally monotone in a right neighborhood of any point in $[0,\infty)$. Then,  the function $(t,T_{reg}(t)) \ni s \mapsto \Lambda_{s}$ is continuously differentiable and, for any 
	$\eta \in (0,(T_{reg}(t)-t)/2)$, there exists a constant $C_{t,T,\eta} \geq 0$ such that, for almost every $s \in [t+\eta,T_{reg}(t) - \eta]$, 
	\begin{equation*}
	\dot{\Lambda}_{s} \leq C_{t,T,\eta}.
	\end{equation*}
	In particular, $\Lambda$ is $C_{t,T,\eta}$-Lipschitz continuous on $[t+\eta,T_{reg}(t)-\eta]$. 
	
	Moreover, $p\in C^{1,2}((t,T_{reg}(t)) \times (0,\infty))$. 
	For any $\eta \in (0,(T_{reg}(t)-t)/2)$,
	it is bounded and continuous on $[t+\eta,T_{reg}(t)-\eta] \times [0,\infty)$ and the space derivative $\partial_{x} p$ is also bounded and continuous on $[t+\eta,T_{reg}(t)-\eta] \times [0,\infty)$. In particular, for any $\eta \in (0,(T_{reg}(t)-t)/2)$, there exists a constant $C_{t,T,\eta}' \geq 0$ such that, for any $s \in [t+\eta,T_{reg}(t)-\eta]$ and $x \geq 0$, 
	\begin{equation*}
	p(s,x) \leq C_{t,T,\eta}' \min(x,1).
	\end{equation*}
\end{proposition}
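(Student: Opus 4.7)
The plan is to bootstrap the Hölder regularity of Corollary \ref{cor:3.1} to Lipschitz regularity by iteratively sharpening the probabilistic comparison of Section \ref{se:2}, and then to invoke standard parabolic theory once the drift $\dot{\Lambda}$ is controlled. First, I would fix a time $r \in [t+\eta/2, T_{reg}(t)-\eta/2]$ and rerun the recurrence of Lemma \ref{le:PhysSolReg.4} with the sharper Hölder bound $\rho(r,y) \leq C y^{\chi}$ (with $\chi>0$ from Corollary \ref{cor:3.1}) in place of the crude $L^\infty$ bound. The two integrals controlling $L^n/\alpha$ are then estimated by $C(L^{n-1})^{1+\chi}$ and $C(\varepsilon^{(1+\chi)/2} + \sqrt{\varepsilon}(L^{n-1})^{\chi})$, respectively. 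Feeding in the base bound $L^{n-1} \leq C\sqrt{\varepsilon}$ from Lemma \ref{le:PhysSolReg.4} itself and passing to the limit in $n$ yields $\widetilde{L} \leq C\varepsilon^{(1+\chi)/2}$, and hence $\Lambda_{r+\varepsilon}-\Lambda_r \leq C\varepsilon^{(1+\chi)/2}$; that is, $\Lambda$ acquires a Hölder exponent strictly above $1/2$ on the compact subinterval.

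With this improved regularity of $\Lambda$, I would re-run the Krylov–Safonov argument of Proposition \ref{prop:reg:Holder}: the drift contribution $\kappa \delta$ to the auxiliary process $Y^n$ there becomes $o(\delta)$ when $\Lambda$ is Hölder-$\gamma$ with $\gamma>1/2$ on scale $\delta^2$, so the parameter $L$ can be taken close to the scale-free value $2$, and the boundary decay exponent of $p$ at $\{x=0\}$ strictly improves. Alternating the sharpened recurrence and the sharpened Krylov–Safonov iteration finitely many times, the Hölder exponent of $\Lambda$ is pushed high enough that one may change variables via $y = x + \Lambda_s - \Lambda_r$ to reduce the equation to the driftless heat equation $\partial_s \widetilde{p} = \frac12 \partial_{yy} \widetilde{p}$ on a Hölder-regular moving domain; a standard boundary-point argument on this reduced problem (together with the global $L^\infty$ bound on $p$) then yields the Lipschitz decay $p(s,x) \leq C'_{t,T,\eta}\min(x,1)$ and the bound $\dot{\Lambda}_s \leq C_{t,T,\eta}$ on $[t+\eta, T_{reg}(t)-\eta]$.

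Once $\dot{\Lambda}$ is bounded on the compact subinterval, the regularized equation \eqref{eq:fokker:planck:regularized} is uniformly parabolic with bounded coefficients. Interior parabolic Schauder estimates applied to $p^n$ produce uniform $C^{1,2}_{\mathrm{loc}}$ bounds; passing to the limit $n\to\infty$ together with uniqueness of the Fokker–Planck equation yields $p \in C^{1,2}((t,T_{reg}(t))\times(0,\infty))$. Continuous extension of $\partial_x p$ up to the boundary $\{x=0\}$ follows from boundary Schauder estimates for the Dirichlet problem with zero boundary data, using the Hölder regularity of $\dot{\Lambda}$ bootstrapped from the representation $\dot{\Lambda}_s = \frac{\alpha}{2} \partial_x p(s,0)$ together with the interior regularity just obtained. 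Continuous differentiability of $\Lambda$ on $(t,T_{reg}(t))$ is then immediate.

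The principal obstacle is the iterative bootstrap in the first two paragraphs, because the self-referential dependence of $\dot{\Lambda}$ on $\partial_x p(\cdot,0)$ rules out a single linear-barrier argument of the form $\phi(s,x)=Ax$. A secondary delicate point is the final transition from Hölder to Lipschitz decay of $p$: this requires sufficiently high Hölder regularity of $\Lambda$ for the boundary point argument on the straightened domain to close, which is precisely why the iterative scheme must be carried out until the exponent enters the regime where standard parabolic estimates for Hölder boundaries apply.
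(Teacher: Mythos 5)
The paper does not take this route at all: its proof of Proposition \ref{prop:reg:barrier} hinges on importing two external results, namely \cite[Theorem 1.7]{HLS} (local well-posedness in a weighted space ${\mathcal S}(K,\epsilon)$, giving the Lipschitz bound on $\Lambda$ after a short initial layer via Lemma \ref{lem:Lip:1}) and the representation formula and bound $\lvert \partial_x p^n \rvert \leq C(s-t-\eta)^{-1/2}$ from \cite[Lemmas 2.1 and 3.1, Propositions 3.2 and 4.2]{DIRT3}, which give the linear decay of $p$ and the continuity of $\partial_x p$ up to the boundary. Your proposal tries to re-derive all of this from the elementary tools in Section \ref{se:2}, which is a genuinely different and more self-contained route. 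Parts of it are correct: re-running the recurrence of Lemma \ref{le:PhysSolReg.4} with $\rho(r,y) \leq C y^{\chi}$ in place of the $L^\infty$ bound does give $\widetilde{L} \leq C \varepsilon^{(1+\chi)/2}$, hence $\Lambda$ Hölder-$(1+\chi)/2$; and re-running it once more with the linear bound $\rho \leq C y$ (if you had it) gives $\widetilde{L} \leq C\varepsilon$ and hence Lipschitz. So steps (a') and (c') of your chain are sound.

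However, there are two genuine gaps. First, the central step — getting linear decay $p(s,x)\leq Cx$ from the knowledge that $\Lambda$ is Hölder-$\gamma$ with $\gamma>1/2$ — is exactly where the difficulty of the whole proposition lives, and your proposal dismisses it as ``a standard boundary-point argument.'' It is not: a linear barrier $w(s,x)=A(x-\Lambda_{s_0}+\kappa(s_0-s)^\gamma)$ has $\partial_s w - \tfrac12\partial_{xx}w = -A\kappa\gamma(s_0-s)^{\gamma-1}<0$, so it is a \emph{sub}solution, and the naive comparison fails; one needs either a carefully tuned non-linear parabolic barrier or a boundary-Harnack-type iteration, and the constant one extracts must be shown uniform over $s$ in the compact subinterval. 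This is, in substance, the content the paper buys from \cite[Theorem 1.7]{HLS}. Your related remark that ``alternating the sharpened recurrence and the sharpened Krylov–Safonov iteration finitely many times pushes the Hölder exponent of $\Lambda$ high enough'' is also misleading: the Krylov–Safonov exponent $\chi = -\ln(1-c)/\ln L$ saturates strictly below $1$ no matter how flat $\Lambda$ becomes (since $c$ is bounded away from $1$), so the bootstrap stalls at some $\gamma_\infty = (1+\chi_\infty)/2 <1$. The only reason this is not fatal is that a single recurrence iteration already yields $\gamma>1/2$, which is all the boundary argument could hope to use — but that boundary argument is precisely the missing piece.

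Second, the concluding regularity bootstrap is circular. You propose to obtain Hölder regularity of $\dot\Lambda$ from the identity $\dot\Lambda_s = \tfrac{\alpha}{2}\partial_x p(s,0)$ and boundary Schauder estimates, but boundary Schauder estimates for the Dirichlet problem require Hölder continuity of the drift $\dot\Lambda$ as input — exactly what you are trying to conclude. Moreover, the identity $\dot\Lambda_s = \tfrac{\alpha}{2}\partial_x p(s,0)$ is itself not free; it is established in the paper (Lemma \ref{le:lambda.pos}) via \cite[Lemma 5.3]{DIRT1} after one already knows continuity of $\partial_x p$ at the boundary. The paper breaks the circle with the explicit integral representation \eqref{eq:formula:partialxpn} for $\partial_x p^n$ and the $n$-uniform a priori bound $\lvert \partial_x p^n(s,x)\rvert \leq C(s-t-\eta)^{-1/2}$, which make it possible to pass to the limit $n\to\infty$ and then $x\downarrow 0$ before any Schauder theory is invoked. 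Your outline would need to supply an analogous non-circular mechanism.
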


Following the proof of Proposition
\ref{prop:reg:Holder}, we may assume throughout that ${\mathbb P}(\tau>t)>0$ for any $t >0$. 
The proof of Proposition \ref{prop:reg:barrier} relies upon the recent results of \cite{HLS}. First, we state the following lemma, which follows directly from \cite{HLS}. 

\begin{lemma}
	\label{lem:Lip:1}
	Fix $T$ and $t$ as in the statement of Proposition \ref{prop:reg:barrier}. Then,	
	for any $\eta \in (0,(T_{reg}(t)-t)/2)$, there exist three positive constants $K_{t,T,\eta}$, $\epsilon_{t,T,\eta} \in (0,\eta)$ and 
	$\chi_{t,T,\eta} \in (0,1)$ such that, 
	for any $s \in [t+\eta,T_{reg}(t)-\eta]$,
	the function $(s,s + \epsilon_{t,T,\eta}) \ni r \mapsto \Lambda_{r}$ is absolutely continuous and satisfies
	\begin{equation*}
	\esssupmath{r \in (s,s+\epsilon_{t,T,\eta})} (r-s)^{(1-\chi_{t,T,\eta})/2}\dot{\Lambda}_{r} \leq K_{t,T,\eta}. 
	\end{equation*} 
\end{lemma}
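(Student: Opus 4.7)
The idea is to treat each $s \in [t+\eta, T_{reg}(t) - \eta]$ as a fresh initial time and invoke a short-time regularity theorem of \cite{HLS}. The two inputs needed for such a restart are precisely what Corollary \ref{cor:3.1} supplies, uniformly in $s$ over the compact interval: there exist $C \geq 0$ and $\chi \in (0,1)$, depending only on $t,T,\eta$, such that
\begin{equation*}
p(s, x) \leq C \min(x^{\chi}, 1), \quad s \in [t+\eta/2, T_{reg}(t)-\eta/2], \ x \geq 0,
\end{equation*}
and $\Lambda$ is $1/2$-H\"older continuous on $[t+\eta/2, T_{reg}(t)-\eta/2]$. In particular, the sub-density $p(s, \cdot)$ is bounded, vanishes at the origin with a H\"older modulus of order $\chi$, and these bounds do not degrade as $s$ varies over $[t+\eta, T_{reg}(t)-\eta]$.

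Next, fix such an $s$ and regard the physical solution on $[s, \infty)$ as a physical solution of \eqref{Stefan_prob} restarted from the sub-probability law with density $p(s, \cdot)$, the missing mass $1 - \int_{0}^\infty p(s, y)\,\mathrm{d}y$ being already absorbed; the physical jump rule \eqref{phys_cond} is inherited by this restart. The short-time regularity theorem of \cite{HLS} then applies, since its hypotheses (a bounded initial sub-density with H\"older decay $Cx^{\chi}$ at zero) are met. It produces a time horizon $\epsilon > 0$ and a constant $K < \infty$, depending only on $\alpha$, $C$, $\chi$, and an $L^{\infty}$-bound on $p(s,\cdot)$ (itself uniform in $s$ by Lemma 5.1 of \cite{NadShk1}), such that $r \mapsto \Lambda_{r}$ is absolutely continuous on $(s, s+\epsilon)$ and
\begin{equation*}
\dot{\Lambda}_{r} \leq K (r-s)^{-(1-\chi)/2} \quad \text{for a.e.\ } r \in (s, s+\epsilon).
\end{equation*}
Since all ingredients governing $\epsilon$ and $K$ are uniform in $s$, we may set $\epsilon_{t,T,\eta} := \epsilon \wedge \eta$, $K_{t,T,\eta} := K$, and $\chi_{t,T,\eta} := \chi$, yielding the claim.

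The main obstacle is essentially bookkeeping rather than analysis: one has to identify exactly which statement of \cite{HLS} is being invoked, verify that its hypotheses match our restarted data $(p(s, \cdot), \Lambda\vert_{[s,\infty)})$ for every $s \in [t+\eta, T_{reg}(t)-\eta]$, and track the dependence of the constants so that the resulting $\epsilon$ and $K$ are genuinely independent of $s$. The singular endpoint $r = s$ in the estimate on $\dot{\Lambda}_{r}$ reflects the fact that, at this point in the paper, we have not yet ruled out an infinite instantaneous freezing rate $\dot{\Lambda}_{s+}$ at the restart time; removing this singularity is exactly the role of the next step in the proof of Proposition \ref{prop:reg:barrier}.
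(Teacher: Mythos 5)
Your proposal is correct and follows the same strategy as the paper: restart the solution at each $s \in [t+\eta, T_{reg}(t)-\eta]$, feed the uniform H\"older decay bound from Corollary \ref{cor:3.1} into the short-time theory of \cite{HLS}, and track that the resulting $\epsilon$ and $K$ depend only on the uniform constants. The one place you are slightly less precise than the paper is in naming the exact mechanism: the paper invokes \cite[Theorem 1.7]{HLS} to produce a solution lying in the weighted space $\mathcal{S}(K,\epsilon)$ (hence with the stated singular bound on $\dot{\Lambda}$), and then separately invokes \cite[Theorem 1.8]{HLS} to argue that the physical solution at hand must \emph{coincide} with that $\mathcal{S}(K,\epsilon)$-solution on $[0,\epsilon]$ — without the latter step, the regularity would a priori hold only for some solution, not necessarily the physical one under consideration.
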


\begin{proof}
	By Corollary \ref{cor:3.1}, there exist a constant $C_{t,T,\eta}$ and an exponent $\chi_{t,T,\eta}$ such that
	\begin{equation}
	\label{eq:p:Holder:infinity}
	p(s,x) \leq C_{t,T,\eta} \min(x^{\chi_{t,T,\eta}},1), \quad x \geq 0,\ s \in [t+\eta,T_{reg}(t)-\eta].  
	\end{equation} 
	For a given $s \in [t+\eta,T_{reg}(t)-\eta]$, we now consider the process $(X_{r+s},\Lambda_{r+s}-\Lambda_{s})_{r \geq 0}$.
	It is a solution of the state equation \eqref{Stefan_prob}, 
	{with 
	$p(s,\cdot)$ as the initial (sub-)density on $(0,\infty)$.}	
	Equation \eqref{eq:p:Holder:infinity}
	implies that the latter (sub-)density 
	has a H\"older decay at the boundary and is bounded on $[0,\infty)$. 
%
	The fact that the constant on the right-hand side of \eqref{eq:p:Holder:infinity} remains independent of $s$ is the key point to invoke the results of  \cite{HLS}. 
	
	For the sake of completeness, we introduce the following space (whose definition is taken from \cite{HLS}):
	For two constants $A$ and $\epsilon$, we denote by ${\mathcal S}(A,\epsilon)$ the collection of elements $\ell$ of $H^1((0,\epsilon))$ (the space of absolutely continuous paths on $(0,\epsilon)$ whose derivative is square integrable) such that $\esssuptext{r \in [0,\epsilon]} r^{(1-\chi_{t,T,\eta})/2} \dot{\ell}(r) \leq A$. Then, \cite[Theorem 1.7]{HLS} says that there exist a constant $K$ and a time $\epsilon>0$, only depending on $t$, $T$ and $\eta$ (through the constants in \eqref{eq:p:Holder:infinity})\footnote{We draw the reader's attention to the fact that the statement in \cite{HLS} is not entirely clear on the dependence of $\epsilon$ upon the shape of $p$, but a careful inspection of the argument shows that our claim is indeed correct.} such that the state equation \eqref{Stefan_prob}, whose initial condition has $p(s,\cdot)$
as its sub-density on $(0,\infty)$, has a unique solution in the space 
	${\mathcal S}(K,\epsilon)$. A priori, uniqueness is within ${\mathcal S}(K,\epsilon)$, but 
	\cite[Theorem 1.8]{HLS} shows that our solution $(\Lambda_{r+s}-\Lambda_{s})_{r \geq 0}$ must coincide with 
	the one in ${\mathcal S}(K,\epsilon)$ on $[0,\epsilon]$. This completes the proof. 
\end{proof} 

\begin{proof}[Proof of Proposition \ref{prop:reg:barrier}] \textit{First Step.} Replacing $\eta$ by $\eta/2$ and choosing $r\in(s+\epsilon_{t,T,\eta/2}/2, s+\epsilon_{t,T,\eta/2})$ in the supremum appearing in the statement of Lemma \ref{lem:Lip:1}, we deduce that $\Lambda$ is Lipschitz continuous on $[t+\eta,T_{reg}(t)-\eta]$, proving one of the statements in Proposition \ref{prop:reg:barrier}.
\vskip 4pt
	
\textit{Second Step.}
	Next, we deduce that $p$ has linear decay in $x$ at the boundary $x=0$ uniformly in $s \in [t+\eta,T_{reg}(t)-\eta]$, 
	for $\eta \in (0,(T_{reg}(t)-t)/2)$. To this end, we proceed as in the proof of Proposition 
	\ref{prop:reg:Holder} and consider mollified versions $((\Lambda^n_{s})_{s \in [t,T_{reg}(t))})_{n \geq 1}$
	of $\Lambda$. 
	By the first step, we can assume that, for a given $\eta \in (0,(T_{reg}(t)-t)/2)$, the functions 
	$((\Lambda^n_{s})_{s \in [t+\eta,T_{reg}(t)-\eta]})_{n \geq 1}$ are Lipschitz continuous, uniformly in $n$. 
	With a slight abuse of notation, we still denote the Lipschitz constant by $C_{t,T,\eta}$. 
	
	Consider the collection of stochastic processes:
	\begin{equation*}
	X^n_{s} = X_{t+\eta} - \bigl( \Lambda_{s}^n - \Lambda^n_{t+\eta} \bigr) + B_{s} - B_{t+\eta}, \quad 
	s \geq t+\eta,
	\end{equation*} 
	for $n \geq 1$.
	For each $n \geq 1$ and any $s \geq t+\eta$, 
	we denote by $p^n(s,\cdot)$ the density of the {restriction of the distribution  of $X^n_{s} \, \bone_{\{s \geq \tau^n\}}$ to $(0,\infty)$}, where 
	$\tau^n := \inf\{ r \geq s : X^n_{r} \leq 0\}$. Following \cite[Lemma 4.2]{DIRT1}, we know that 
	$p^n$ is continuous on $(t+\eta,\infty) \times [0,\infty)$ and that it is a classical solution of the PDE
	\begin{equation}
	\label{eq:PDE:pn}
	\partial_{t} p^n- \frac12 \partial_{xx} p^n - \dot{\Lambda}^n \partial_{x} p^n = 0, \quad s>t+\eta, \ x >0.
	\end{equation}

	By \cite[Lemmas 2.1 and 3.1]{DIRT3} (up to an obvious modification, as the absorption herein occurs at the boundary of $(0,\infty)$ and not at the boundary of $(-\infty,1)$), we know that, for each $n \geq 1$ and each $s \in (t+\eta,T_{reg}(t)-\eta]$, the function $p^n(s,\cdot)$ is differentiable at any point $x \geq 0$. Moreover,
	\begin{equation}
	\label{eq:formula:partialxpn}
	\begin{split}
	\partial_{x} p^n(s,x) &= \int_{0}^{\infty} \partial_{y} q(s-(t+\eta),z,x)\, p^n(t+\eta,z)\,\mathrm{d}z 
	\\
	&\hspace{12pt}+ \int_{t+\eta}^{s}   \int_{0}^{\infty} \dot{\Lambda}^n_{r}\, 
	\partial_{x} p^n(r,z)\, \partial_{y} q(s-r,z,x)\,\mathrm{d}r\,\mathrm{d}z,
	\end{split}
	\end{equation} 
	where 
	\begin{equation*}
	q(r,x,y) = g(r,x-y) - g(r,x+y), \quad r >0, \ x,y >0,
	\end{equation*}
	is the kernel of the heat equation with absorption at $x=0$, and the function $g(r,\cdot)$ denotes the usual 
	Gaussian kernel of variance $r$ (and of zero mean). We make the following key observations. First, we know that each 
	$p^n$ solves 
	\eqref{eq:PDE:pn}. As the $(p^n)_{n \geq 1}$ are bounded uniformly in $n$, we deduce from standard results on the smoothing effect of the heat equation that, on any closed ball included in $(t+\eta,T_{reg}(t)-\eta) \times (0,\infty)$, the functions $((s,x) \mapsto p^n(s,x))_{n \geq 1}$
	are in $C^{(1+\alpha)/2,1+\alpha}((t+\eta,T_{reg}(t)-\eta) \times (0,\infty))$, for some $\alpha \in (0,1)$, uniformly in $n \geq 1$ (namely $p^n$ and $\partial_{x} p^n$ are locally H\"older continuous in time and space, uniformly in $n \geq 1$). As $p^n$ converges to 
	$p$ on $(t+\eta,T_{reg}(t)-\eta) \times (0,\infty)$ ({see Proposition \ref{prop:reg:Holder}}), this shows that $p$ is differentiable in $x$ on $(t+\eta,T_{reg}(t)-\eta) \times (0,\infty)$
	and that $(\partial_{x} p^n(s,x))_{n \geq 1}$ converges to $\partial_{x} p(s,x)$ for any $(s,x) \in (t+\eta,T_{reg}(t)-\eta) \times (0,\infty)$. 
	
	Another observation is that \cite[Propositions 3.2 and 4.2]{DIRT3} imply the existence of a constant $C$ (possibly depending on $t$, $T$, $\eta$, but independent of $n$) such that, for any $n \geq 1$ and $(s,x) \in (t+\eta,T_{reg}(t)-\eta) \times (0,\infty)$, 
	\begin{equation}
	\label{eq:bound:partialxpn}
	\vert \partial_{x} p^n(s,x) \vert \leq \frac{C}{\sqrt{s-(t+\eta)}}.
	\end{equation}
	Also, we have, for all $r \in (t+\eta,s)$ {and $z>0$},
	\begin{equation}
	\label{eq:bound:partialxq}
	\vert \partial_{y} q(s-r,z,x) \vert \leq \frac{C}{s-r} \exp \Bigl( - \frac{\vert x-z \vert^2}{C(s-r)} \Bigr). 
	\end{equation}
	In particular, we have the following bound for the second integrand in \eqref{eq:formula:partialxpn} (allowing for a new value of the constant $C$):
	\begin{equation*}
	\bigl\vert \dot{\Lambda}^n_{r} 
	\partial_{x} p^n(r,z) \partial_{y} q(s-r,z,x)
	\bigr\vert \leq \frac{C}{\sqrt{r-(t+\eta)}\,(s-r)}\exp \Bigl( - \frac{\vert x-z \vert^2}{C(s-r)} \Bigr),
	\end{equation*}
	which is integrable in $(r,z) \in (t+\eta,s) \times (0,\infty)$. Since, after passing to a subsequence, $\dot{\Lambda}^n$ converges almost
	everywhere (in time) to $\dot{\Lambda}$, we can 
	take the limit in \eqref{eq:formula:partialxpn}
	as $n \rightarrow \infty$ and deduce that 
	\begin{equation}
	\label{eq:formula:partialxpn:2}
	\begin{split}
	\partial_{x} p(s,x) &= \int_{0}^{\infty} \partial_{y} q(s-(t+\eta),z,x)\,p(t+\eta,z)\,\mathrm{d}z 
	\\
	&\hspace{12pt}+ \int_{t+\eta}^{s}   \int_{0}^{\infty} \dot{\Lambda}_{r}\, 
	\partial_{x} p(r,z)\, \partial_{y} q(s-r,z,x)\,\mathrm{d}r\,\mathrm{d}z.
	\end{split}
	\end{equation}   
	Moreover, taking the limit in \eqref{eq:bound:partialxpn}, we also have 
	\begin{equation}
	\label{eq:bound:partialxp}
	\vert \partial_{x} p(s,x) \vert \leq \frac{C}{\sqrt{s-(t+\eta)}}, \quad s \in \bigl(t+\eta,T_{reg}(t)-\eta\bigr), \  x >0. 
	\end{equation}
	Therefore, we can take the limit as $x \downarrow 0$ in \eqref{eq:formula:partialxpn:2}. We deduce that, for any 
	$s \in (t+\eta,T_{reg}(t)-\eta)$, $\partial_{x} p(s,x)$ has a limit as $x \downarrow 0$. In particular, 
	$p(s,\cdot)$ is differentiable at the point 0 and $\partial_{x} p(s,0)$ is given by \eqref{eq:formula:partialxpn:2}. Since $\eta$ is arbitrary, this is true for any 
	$s \in (t,T_{reg}(t))$. Moreover, we deduce from 
	\eqref{eq:bound:partialxp} (with $\eta$ replaced by $\eta/2$), that 
	for any $\eta \in (0,(T_{reg}(t)-t)/2)$, we can find 
	a constant $C_{t,T,\eta}'$ such that
	\begin{equation}
	\label{eq:Lip:2}
	p(s,x) \leq C_{t,T,\eta}' \min(x,1), \quad s \in [t+\eta,t+T_{reg}(t)-\eta], \ x \geq 0.
	\end{equation} 
	\vskip 4pt
	
	\textit{Fourth Step.}
	By combining 
	the conclusions of the second and third steps, we deduce that, for any $\eta \in (0,(T_{reg}(t)-t)/2)$, the function
	$p(t+\eta,\cdot)$ is differentiable on $[0,\infty)$ and satisfies 
	\eqref{eq:Lip:2}. 
	In other words, it satisfies all the assumptions of the existence and uniqueness result stated in 
	\cite[Theorem 4.1]{DIRT1} (which is stated in a slightly different framework, but which obviously applies in our setting). 
	This latter result says that there exists a unique solution $(\tilde X,\tilde \Lambda)$ to the state equation \eqref{Stefan_prob} whose initial condition has 
	$p(t+\eta,\cdot)$ as its sub-density on $(0,\infty)$,
	and it is such that $\tilde \Lambda$ is continuously differentiable on $[t+\eta,t+\eta+\epsilon]$, for some $\epsilon \in (0,T_{reg}(t)-t-2\eta)$ only depending on the parameter $C_{t,T,\eta}'$ in \eqref{eq:Lip:2}. By \cite[Theorem 1.8]{HLS}, the process $\tilde \Lambda$ must coincide with $\Lambda-\Lambda_{t+\eta}$
	on $[t+\eta,t+\eta+\epsilon]$, which shows that $\Lambda$ is continuously differentiable on $(t,T_{reg}(t))$.  
	
	Moreover,  \eqref{eq:bound:partialxp} shows that $\partial_{x} p$ is bounded on $[t+\eta,T_{reg}(t)-\eta] \times [0,\infty)$, for any 
	$\eta \in (0,(T_{reg}(t)-t)/2)$.  
	
	Lastly, using the fact that, for $s>r$ (and with a new $C<\infty$),
	\begin{equation*}
	\bigl\vert \partial_{t} \partial_{y} q(s-r,z,x) \bigr\vert \leq \frac{C}{(s-r)^{2}} \exp\Bigl( - \frac{\vert x-z \vert^2}{C(s-r)} \Bigr),
	\end{equation*}
	we see from \eqref{eq:bound:partialxq} that, for $s>s'>r$,
	\begin{equation*}
	\begin{split}
	&\bigl\vert \partial_{y} q(s-r,z,x) - \partial_{y} q(s'-r,z,x)  \bigr\vert 
	\\
	&\leq \biggl\vert \int_{s}^{s'} \partial_{t} \partial_{y} q(s''-r,z,x)\, \mathrm{d} s'' \biggr\vert^{1/8}
	\Bigl(  \bigl\vert \partial_{y} q(s-r,z,x) \bigr\vert^{7/8} + \bigl\vert \partial_{y} q(s'-r,z,x)  \bigr\vert^{7/8}
	\Bigr)
	\\
	&\leq \frac{C \vert s-s'\vert^{1/8}}{(s'-r)^{9/8}} \exp\Bigl( - \frac{\vert x-z \vert^2}{C(s'-r)} \Bigr).
	\end{split}
	\end{equation*}
	Plugging the above bound in \eqref{eq:formula:partialxpn}, we deduce that, for any $\eta \in (0,(T_{reg}(t)-t)/2)$, 
	$\partial_{x} p$ is continuous in $s$, uniformly over $x$, for $(s,x)\in[t+\eta,T_{reg}(t)-\eta] \times [0,\infty)$. We conclude that 
	$\partial_{x} p$ is continuous on $(t,T_{reg}(t)) \times [0,\infty)$.  
\end{proof}

\subsection{{Smoothness and analyticity} inside the domain}
\label{subse:3.2}
Whilst Proposition 
	\ref{prop:reg:barrier} mostly concerns regularity at the boundary, the next result addresses the regularity of $p$ \textit{inside the domain}.

\begin{proposition}\label{prop:analytic.1}
For a physical solution $(X,\Lambda)$ of \eqref{Stefan_prob},
following 
\eqref{u_to_p}, we denote 
\begin{equation*}
u(t,x):=p(t,x-\Lambda_t), \quad t >0, \ x > \Lambda_{t}. 
\end{equation*}
Then, $u$ is  {$C^{\infty}$ in $(t,x)$} and satisfies $\partial_t u = \frac{1}{2}\partial_{xx} u$ pointwise on $\mathring{D}:=\{(t,x)\in(0,\infty)^2: x\!>\!\Lambda_t\}$.
Moreover, for any $(t,x) \in \mathring{D}$, there is a neighborhood of $(t,x)$ in ${\mathbb R} \times {\mathbb C}$ on 
which there exist extensions of
$u$ and its time derivatives of all orders, such that the extensions are analytic in the space variable 
with jointly continuous space derivatives of all orders. In particular, 
for any $t>0$, the function $u(t,\cdot)$ is real analytic in $x$ on $(\Lambda_{t},\infty)$, and the functions $p(t,\cdot)$, $\rho(t,\cdot)$ are real analytic in $x$ on $(0,\infty)$.
\end{proposition}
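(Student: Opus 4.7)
My plan is to show that $u$ is a distributional solution of $\partial_t u = \tfrac12 \partial_{xx} u$ on $\mathring{D}$, upgrade this to a pointwise $C^\infty$ solution via hypoellipticity of the heat operator (Weyl's lemma in the form of \cite[p.~90, Step 4]{McK}), and then extract analyticity in the spatial variable from \cite[Theorem 1]{Kom}.

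For the distributional formulation, I would introduce the continuous companion process $Y_s := X_{0-} + B_s$, so that $X_s = Y_s - \Lambda_s$ and $\tau = \inf\{s \geq 0 : Y_s \leq \Lambda_s\}$. A direct change of variables identifies $u(t,\cdot)$ with the sub-density on $(\Lambda_t,\infty)$ of $Y_t \bone_{\{\tau \geq t\}}$, namely, of Brownian motion killed at the moving barrier $\Lambda$. Given $\phi \in C_c^\infty(\mathring{D})$, I choose $s_1 < s_2$ and a buffer $\eta > 0$ such that $\mathrm{supp}(\phi) \subset (s_1,s_2) \times \{y > \Lambda_r + \eta\}$, and apply It\^o's formula to $\phi(s,Y_s)$ between $s_1$ and $\tau \wedge s_2$. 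All boundary terms vanish: at $s_1,s_2$ because $\phi$ does; at $\tau$ because $Y_\tau = \Lambda_\tau$ sits below the support of $\phi$; and the martingale integral has zero expectation. Taking expectations and rewriting the remaining term against the sub-density $u(r,\cdot)$ yields
\begin{equation*}
0 \,=\, \int_{s_1}^{s_2} \int_{\RR} \Bigl(\partial_r \phi + \tfrac{1}{2}\partial_{xx}\phi\Bigr)(r,x)\, u(r,x)\, \mathrm{d}x\, \mathrm{d}r,
\end{equation*}
which is precisely the distributional form of $\partial_t u = \tfrac{1}{2}\partial_{xx} u$ on $\mathring{D}$.

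Since $u$ is locally bounded on $\mathring{D}$ (cf.~Proposition \ref{prop:reg:barrier} on the intervals where it applies, plus an elementary heat kernel estimate in general), Weyl's lemma produces a smooth version of $u$ satisfying the heat equation pointwise. The identification of $u$ with its smooth version is consistent across jump times of $\Lambda$: at such a time $t$ one has $p(t,y) = \rho(t,y + (\Lambda_t - \Lambda_{t-}))$, and combining this with the shift defining $u$ gives $u(t,x)= u(t-,x)$, so $u$ is continuous in $t$ through the jump. Applying \cite[Theorem 1]{Kom} to this classical $C^\infty$ solution on the open set $\mathring{D}$ then yields, around every $(t_0,x_0) \in \mathring{D}$, a neighborhood in $\RR \times \CC$ on which $u$ and all its time derivatives admit extensions that are analytic in the space variable with jointly continuous spatial derivatives. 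In particular, $u(t,\cdot)$ is real analytic on $(\Lambda_t,\infty)$, and translating by $\Lambda_t$ transfers this to $p(t,\cdot) = u(t,\cdot + \Lambda_t)$ on $(0,\infty)$. For $\rho(t,\cdot)$, I use the identity $\rho(t,y) = u(t-,y+\Lambda_{t-})$ on $(0,\infty)$, together with the fact that $u(t-,\cdot)$ inherits analyticity on $(\Lambda_{t-},\infty)$ as the limit of the analytic extensions of $u(s,\cdot)$ as $s \uparrow t$.

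The main obstacle is the first step: the possibility that $\Lambda$ may jump is bypassed by lifting the analysis to the continuous process $Y$ and encoding the free boundary entirely in $\bone_{\{\tau \geq s\}}$. The buffer $\eta$ between $\mathrm{supp}(\phi)$ and the moving obstacle $\{y = \Lambda_r\}$ automatically excludes absorbed trajectories from the It\^o computation, so no regularity of $\Lambda$ beyond right-continuity and monotonicity is needed to derive the distributional equation; once it is in hand, hypoellipticity and parabolic analyticity do all the remaining work.
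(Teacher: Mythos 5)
The first half of your argument — deriving the distributional equation via the companion process and It\^o's formula, then invoking Weyl's lemma (\cite[p.~90, Step 4]{McK}) to obtain a $C^\infty$ solution of the heat equation on $\mathring{D}$ — is essentially the same as the paper's proof; the paper phrases the change of variables slightly differently but the idea is identical, and your observation about the buffer $\eta$ isolating the support from the moving obstacle is exactly what makes the It\^o computation go through.

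The gap is in the second half, where you apply \cite[Theorem 1]{Kom} directly to the smooth solution $u$ on $\mathring{D}$. The paper does \emph{not} do this, and with good reason: the Komatsu citation is an analyticity theorem for a Cauchy--Dirichlet problem on a strip with \emph{zero} (or at least time-analytic) lateral boundary data. When you restrict $u$ to a rectangle $[t_1,t_2]\times[x_1,x_2]\subset\mathring{D}$, the lateral traces $\varphi_i:=u(\cdot,x_i)$ are only $C^\infty$ in time, not time-analytic (solutions of the heat equation generically are not time-analytic), so Komatsu's hypotheses fail. This is precisely why the paper introduces Lemma~\ref{lem:3.5}: it decomposes the solution as $w=w^0+\Delta$, where $w^0$ has zero lateral data and compactly supported smooth initial data (so Komatsu applies and gives joint $(t,x)$-analyticity of $w^0$), while $\Delta$ carries the merely $C^\infty$ lateral data. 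The remainder $\Delta$ is then analyzed by Laplace transforming in time, solving the resulting ODE $\partial_{xx}\widehat\Delta=2\lambda\widehat\Delta$ explicitly in terms of exponentials, and proving the quantitative decay $|\widehat\Delta(\lambda,x)|\le (C/|\lambda|^2)\exp(-\varepsilon\sqrt{|\lambda|}/4)$ for $x$ in a complex strip — this exponential estimate is what allows the Laplace inversion integral to converge for complex $x$ and what delivers the locally uniform-in-time complex extension that the proposition asserts (and that Section~\ref{se:4} needs). None of that appears in your proposal, so the central analyticity/complex-extension claim is not established. Relatedly, your treatment of $\rho(t,\cdot)$ at discontinuity times of $\Lambda$ — "$u(t-,\cdot)$ inherits analyticity as the limit of the analytic extensions of $u(s,\cdot)$" — also needs the uniform control over those complex extensions that Lemma~\ref{lem:3.5} supplies; the paper justifies it by first constructing a $C^\infty$ extension of $u$ up to the slice $\{t\}\times[x_1,x_2]$ via interior heat-equation estimates and only then rerunning the Laplace-transform machinery.
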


At first sight, the fact that $u$ solves the heat equation could appear as a simple reformulation of 
the fact that $p$ satisfies the equation \eqref{Dir_problem}, but it is not! As we have already explained, $p$ may have time discontinuities (and, hence, may not be smooth in time) at those times $t$ when $ \lim_{\eta \downarrow 0} \esssupmath{y \in (0,\eta)}  \ \rho(t,y) \geq 1/\alpha$. This is in contrast with the above statement: Therein, we assert that $u$ is {smooth} in $\mathring{D}$ whatever the behavior of $\rho$ at the boundary.
Regarding the smoothness in the sole space variable $x$, the real analyticity of $u$ in $x$  was already pointed out in the very recent preprint 
\cite{LS2}. In fact, the real analyticity in $x$ of the solutions to the heat equation is a  general result in PDE theory, see for instance \cite[exercise 8.4.7]{KrylovHolder}. Here, our result says more since we not only prove the real analyticity in $x$ of $u$ and of its time derivatives, but also extend $u$ to a complex domain in $x$, locally uniformly in $t$. We use the latter fact in 
Section \ref{se:4}. Also, our arguments are different from those used in \cite{LS2}, as we have developed our analysis independently. 

Lastly, it is worth noticing that the analyticity of $\rho(t,\cdot)$ is a not a direct consequence of the analyticity of $u(t,\cdot)$; indeed, the definition of $u$ yields $\rho(t,x) = u(t,x+\Lambda_{t{-}})$, but the latter only makes sense when 
$x+\Lambda_{t-} > \Lambda_{t}$, that is, $x > \Lambda_{t} - \Lambda_{t-}$. Hence, an additional (small) argument is needed to prove that, at any given discontinuity time $t$ of $\Lambda$, $u(t,\cdot)$ may be analytically extended 
to the entire $(\Lambda_{t-},\infty)$. 

\begin{proof}
Let $0<t_1<t_2<\infty$ and $0<x_1<x_2<\infty$ fulfill $[t_1,t_2]\times[x_1,x_2]\subset\mathring{D}$. Then, $u$ is a generalized solution of the PDE $\partial_t u = \frac{1}{2}\partial_{xx} u$ on $[t_1,t_2]\times[x_1,x_2]$. Indeed, for any $\varphi\in C_c^\infty((t_1,t_2)\times(x_1,x_2))$,
\begin{equation*}
\begin{split}
\int_{t_1}^{t_2} \int_{x_1}^{x_2} u\,\Big(\partial_t\varphi+\frac{1}{2}\partial_{xx}\varphi\Big)\,\mathrm{d}x\,\mathrm{d}t
&=\int_{t_1}^{t_2} \int_{x_1-\Lambda_t}^{x_2-\Lambda_t} p(t,y)\,\Big(\partial_t\varphi+\frac{1}{2}\partial_{xx}\varphi\Big)(t,y+\Lambda_t)\,\mathrm{d}y\,\mathrm{d}t 
\\
&=\int_{t_1}^{t_2} \E\Big[\Big(\partial_t\varphi+\frac{1}{2}\partial_{xx}\varphi\Big)(t,X_t+\Lambda_t)\,\mathbf{1}_{\{\tau \geq t\}}\Big]\,\mathrm{d}t 
\\
&=\E\bigg[\int_{t_1\wedge\tau}^{t_2\wedge\tau} \Big(\partial_t\varphi+\frac{1}{2}\partial_{xx}\varphi\Big)(t,X_0+B_t)\,\mathrm{d}t \bigg] 
\\
&=\E\big[\varphi(t_2\wedge\tau,X_0+B_{t_2\wedge\tau})
-\varphi(t_1\wedge\tau,X_0+B_{t_1\wedge\tau})\big]=0,
\end{split}
\end{equation*}
where in the second-last equality we used It\^o's formula and the optional sampling theorem (see e.g.~\cite[chapter II, corollary 3.6]{RY}), and in the last equality we used the fact that the support of $\varphi$ is included in $(t_{1},t_{2}) \times (x_{1},x_{2})$. We conclude that $u\in C^\infty([t_1,t_2]\times[x_1,x_2])$ and $\partial_t u = \frac{1}{2}\partial_{xx} u$ pointwise on $(t_1,t_2)\times(x_1,x_2)$ by virtue of Weyl's lemma in the form of \cite[p.~90, step 4]{McK}. The analyticity of $u$, and of its time derivatives, in space is now a consequence of Lemma \ref{lem:3.5} below, applied to the function
\begin{equation}
w:\;[0,\infty)\times[x_1,x_2]\to\rr,\quad (t,x)\mapsto \widetilde{w}(t+\epsilon,x), 
\end{equation}
where $\widetilde{w}$ is the classical solution of the Cauchy-Dirichlet problem
\begin{equation}
\label{eq:35}
\begin{split}
& \partial_t\widetilde{w}=\frac{1}{2}\partial_{xx}\widetilde{w}\quad\text{on}\quad [\epsilon,\infty)\times[x_1,x_2], 
\quad \textrm{with} \
  \left\{ \begin{array}{l}
\widetilde{w}(\epsilon,x)=u(\epsilon,x),\quad x_1\le x\le x_2, \\
\widetilde{w}(t,x_1)=u(t,x_1)\,\varphi(t),\quad t\ge\epsilon, \\
\widetilde{w}(t,x_2)=u(t,x_2)\,\varphi(t),\quad t\ge\epsilon, 
\end{array}
\right.
\end{split}
\end{equation}
and with $\varphi\in C^\infty([\epsilon,\infty))$ satisfying $\varphi\equiv 1$ on $[\epsilon,t_2]$ and $\varphi\equiv0$ on $[t_2+\epsilon,\infty)$, for some $\epsilon\in(0,t_1)$ such that $[0,t_2+\epsilon]\times[x_1,x_2]\subset \mathring{D}$. Clearly, $\tilde w$ coincides with $u$ on $[\epsilon,t_{2}]$. The analyticity of $p$ in $x \in (0,\infty)$ at positive times follows easily.

It now remains to address the analyticity of $\rho(t,\cdot)$ in $x \in (0,\infty)$, for $t>0$.
Clearly, it suffices to treat the case $\Lambda_{t} > \Lambda_{t-}$ and $x \in (0,\Lambda_{t}- \Lambda_{t-}]$. 
With the same notation as before, we consider a rectangle $[t_{1},t_{2}] \times [x_{1},x_{2}]$, with $t_{1}>0$, $t_{2}=t$ and $x_{1} > \Lambda_{t-} = \Lambda_{t_{2}-}$. For $s \in [t_{1},t_{2})$ and $x \in (x_{1},x_{2})$, we have
\begin{equation*}
u(s,x) = {\mathbb E} \bigl[ u\bigl( s- \sigma \wedge (s-t_{1}), x + B_{\sigma \wedge (s-t_{1})}\bigr) \bigr],
\end{equation*}
where $\sigma:= \inf\{ r \geq 0 : \, x + B_{r} \not \in (x_{1},x_{2})\}$. Since 
${\mathbb P}(\sigma >0)=1$ and since $u$ is continuous and bounded on $[t_{1},t_{2}) \times [x_{1},x_{2}]$, 
the right-hand side of the above has a limit as $s \uparrow t_{2}=t$. 
In fact, by interior estimates for the heat equation, we have 
uniform bounds on the derivatives (of any order) of $u$ on any 
$[t_{1}',t_{2}) \times [x_{1}',x_{2}']$, with $t_{1} < t_{1}' < t_{2}$ and $x_{1} < x_{1}' < x_{2}' < x_{2}$. 
As we may play with the choice of $t_{1}$, $x_{1}$ and $x_{2}$, we have 
uniform bounds on the derivatives (of any order) of $u$ on $[t_{1},t_{2}) \times [x_{1},x_{2}]$. This says that $u$
has a $C^{\infty}([t_{1},t_{2}] \times [x_{1},x_{2}])$-extension. Denoting this extension by $\tilde u$, we can repeat 
\eqref{eq:35} by extending $\tilde u(s,x_{1})$ and $\tilde u(s,x_{2})$ in a constant way for $s \geq t_{2}$. 
We deduce that $\tilde{u}(t,\cdot)$ is analytic on $(\Lambda_{t-},\infty)$ and, thus, $\rho(t,\cdot)$ is analytic on $(0,\infty)$. 
\end{proof}

\begin{lemma}
\label{lem:3.5}
Let $0<x_1<x_2<\infty$ and $w\in C^\infty([0,\infty)\times[x_1,x_2])$ be a classical solution of 
\begin{equation}
\partial_t w=\frac{1}{2}\partial_{xx}w\quad\text{on}\quad [0,\infty)\times[x_1,x_2]
\end{equation}
with $\varphi_1:=w(\cdot,x_1)\in C^\infty_c([0,\infty))$ and $\varphi_2:=w(\cdot,x_2)\in C^\infty_c([0,\infty))$. Then, for any $(t,x) \in (0,\infty) \times (x_{1},x_{2})$, there is a neighborhood of $(t,x)$ in ${\mathbb R} \times {\mathbb C}$ to which
$w$ and its time derivatives of any order can be extended, the extensions being analytic in the space variable with jointly continuous space derivatives. In particular, $w$ is real analytic in $x$ on $(x_1,x_2)$.
\end{lemma}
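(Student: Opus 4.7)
The plan is to express $w$ via an explicit integral representation against the Dirichlet heat kernel on $[x_1,x_2]$ and then to read off analyticity in $x$ from the method-of-images expansion of that kernel.

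First, I fix $(t_0,x_0) \in (0,\infty) \times (x_1,x_2)$ and pick $t_\ast \in (0,t_0)$. Let $G(\sigma,x,y)$ denote the heat kernel on $[x_1,x_2]$ with Dirichlet conditions at both endpoints. Applying Green's identity to $w$ and $(s,y) \mapsto G(t-s,x,y)$ on $[t_\ast,t]\times[x_1,x_2]$, and using that $G$ vanishes at $y \in \{x_1,x_2\}$, yields the representation
\begin{equation*}
w(t,x) = \int_{x_1}^{x_2} G(t-t_\ast, x, y)\,w(t_\ast,y)\,\mathrm{d}y + \frac{1}{2}\int_{t_\ast}^t \bigl[\varphi_1(s)\,\partial_y G(t-s,x,x_1) - \varphi_2(s)\,\partial_y G(t-s,x,x_2)\bigr]\,\mathrm{d}s,
\end{equation*}
valid for all $t > t_\ast$ and $x \in (x_1,x_2)$.

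Next, I invoke the classical method-of-images formula
\begin{equation*}
G(\sigma,x,y) = \sum_{n\in\ZZ}\bigl[g(\sigma,\,x-y+2nL) - g(\sigma,\,x+y-2x_1+2nL)\bigr],
\end{equation*}
with $L := x_2-x_1$ and $g(\sigma,z):=(2\pi\sigma)^{-1/2}\exp(-z^2/(2\sigma))$. For fixed $\sigma>0$, every summand is an entire function of its $z$-argument (hence of $x$), and elementary Gaussian bounds show that the series, together with its $(\sigma,x,y)$-derivatives of every order, converges absolutely and uniformly on sets of the form $[\varepsilon,M] \times K \times [x_1,x_2]$ for any compact $K \subset \CC$. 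In particular, both $G(\sigma,\cdot,y)$ and $\partial_y G(\sigma,\cdot,x_i)$ for $i=1,2$ admit holomorphic extensions to a fixed complex neighborhood of $[x_1,x_2]$, uniformly in $\sigma$ bounded away from $0$.

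Substituting these complex extensions into the representation produces a function holomorphic in $x$ on a complex neighborhood of $x_0$, jointly continuous in $(t,x)$ for $t$ in a real neighborhood of $t_0$, and agreeing with $w$ on the real axis; this is the desired extension. For the time derivatives, I differentiate the representation in $t$: since $G$ and $\partial_y G$ are smooth in $\sigma$ for $\sigma > 0$ and still admit an image expansion with Gaussian decay in $n$, the same argument gives analytic extensions of all $\partial_t^k w$ in $x$, and joint continuity of the spatial derivatives of every order follows from the uniform convergence of the series and its termwise derivatives. The main technical obstacle will be the bookkeeping required to justify differentiation under the sum and the integral in the complex strip, but this reduces to routine dominated-convergence arguments supported by the Gaussian decay of $g(\sigma,\,z+2nL)$ as $|n|\to\infty$, uniformly for $z$ in bounded subsets of $\CC$.
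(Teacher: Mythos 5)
Your approach is genuinely different from the paper's and, in outline, sound. The paper decomposes $w = w^0 + \Delta$: for $w^0$, which has initial data compactly supported in the interior and zero boundary data, it invokes the time-space analyticity theorem of Komatsu; for $\Delta$, it takes a Laplace transform in time, solves the resulting linear ODE in $x$ explicitly in terms of $\sinh$ and exponentials, and then inverts the Laplace transform with a careful $\exp(-\varepsilon\sqrt{|\lambda|}/4)$-type decay bound. Your route instead writes $w$ via a Duhamel representation against the Dirichlet heat kernel $G$ on $[x_1,x_2]$ and reads off complex analyticity in $x$ from the method-of-images expansion of $G$. This is more elementary and concrete, and it avoids both Komatsu's theorem and the inverse Laplace transform machinery. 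Both deliver the same conclusion.

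However, there is a gap in the argument as written, and your final paragraph misdiagnoses where the technical work lies. The boundary term
\begin{equation*}
\frac12 \int_{t_*}^t \bigl[\varphi_1(s)\,\partial_y G(t-s,x,x_1) - \varphi_2(s)\,\partial_y G(t-s,x,x_2)\bigr]\,\mathrm{d}s
\end{equation*}
involves $\sigma := t-s$ ranging all the way down to $0$, whereas your stated uniform bounds for the image series are explicitly for $\sigma$ on $[\varepsilon,M]$, i.e., \emph{bounded away from zero}. As $\sigma\downarrow 0$ and $x$ is allowed to wander off the real axis, $\partial_y G(\sigma,x,x_i)$ is not controlled by those bounds, and without an integrable dominant near $\sigma=0$ you cannot conclude that the boundary integral defines a holomorphic function of $x$. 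The decay as $|n|\to\infty$, which you single out as the main obstacle, is essentially trivial here; the real work is the off-diagonal short-time estimate. Concretely, writing $\partial_y G(\sigma,x,x_1) = -2\sum_{n\in\ZZ} g'\bigl(\sigma,\,x-x_1+2nL\bigr)$, one must check that for $x$ in a complex disc around $x_0\in(x_1,x_2)$ of radius smaller than, say, $\tfrac12\min(x_0-x_1,\,x_2-x_0)$, one has $\mathrm{Re}\bigl[(x-x_1+2nL)^2\bigr] \geq c>0$ uniformly over $n$, so that $\vert g'(\sigma,\,x-x_1+2nL)\vert \leq C_n\,\sigma^{-3/2} e^{-c_n/\sigma}$ with $\sum_n C_n <\infty$ and $\inf_n c_n>0$; this makes the $\sigma$-integral converge absolutely, uniformly in $x$ on that disc, and restores the dominated-convergence argument. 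Without this observation the proof does not close, and with imaginary part too large (relative to the distance to the endpoints) the exponent flips sign and the estimate fails — which is exactly why the complex neighborhood must shrink as $x_0$ approaches $x_1$ or $x_2$.
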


\begin{proof}
\textit{First Step.}
We pick any $x_1<\overset{\circ}{x}_1<\overset{\circ}{x}_2<x_2$ and any $[0,1]$-valued function $\varphi_0\in C^\infty([x_1,x_2])$ compactly supported in $(x_1,x_2)$ with $\varphi_0\equiv 1$ on $[\overset{\circ}{x}_1,\overset{\circ}{x}_2]$. Then, by \cite[chapter IV, Theorem 5.2]{Lady} there exists a unique classical solution $w^0\in C^\infty([0,\infty)\times[x_1,x_2])$ of the Cauchy-Dirichlet problem 	
\begin{equation}
\begin{split}
& \partial_t w^0=\frac{1}{2}\partial_{xx}w^0\quad\text{on}\quad [0,\infty)\times[x_1,x_2], 
\\
&
\textrm{with}
\
\left\{\begin{array}{l} 
w^0(0,x)=w(0,x)\,\varphi_0(x),\quad x_1\le x\le x_2, 
\\
w^0(t,x_1)=w^0(t,x_2)=0,\quad t\ge 0.
\end{array}
\right.
\end{split}
\end{equation}
Moreover, $w^0$ is real analytic in $(t,x)$ on $(0,\infty)\times[x_1,x_2]$ by \cite[Theorem 1]{Kom}. In particular, for any $(t,x) \in (0,\infty) \times (x_{1},x_{2})$, the function $w^0$ has a complex analytic extension to 
a complex neighborhood of $(t,x)$. Hence, the conclusions of the lemma hold for $w^0$. Thus, it suffices to show that the conclusions of the lemma also hold for $\Delta:=w-w^0$. 

\medskip
	
\textit{Second Step.} The function $\Delta\in C^\infty([0,\infty)\times[x_1,x_2])$ is a classical solution of the Cauchy-Dirichlet problem
\begin{equation}
\label{eq:Delta}
\begin{split}
& \partial_t \Delta=\frac{1}{2}\partial_{xx}\Delta\quad\text{on}\quad [0,\infty)\times[x_1,x_2], 
\\
&
\textrm{with} \
\left\{
\begin{array}{l} 
\Delta(0,x)=w(0,x)(1-\varphi_0(x)),\quad x_1\le x\le x_2, 
\\
\Delta(t,x_1)=\varphi_1(t),\quad t\ge0, 
\\
\Delta(t,x_2)=\varphi_2(t),\quad t\ge0.
\end{array}
\right.
\end{split}
\end{equation}
In particular, $\Delta,\partial_t\Delta,\partial_{tt}\Delta\in C^\infty([0,\infty)\times[x_1,x_2])$ are all classical solutions of the forward heat equation on $[0,\infty)\times[x_1,x_2]$ and, by the maximum principle, all three are globally bounded in absolute value. The latter is also true for $\partial_{xx}\Delta=2\partial_t\Delta\in C^\infty([0,\infty)\times[x_1,x_2])$, so that, with $\cc_+:=\{\lambda\in\cc:\,\mathrm{Re}\,\lambda>0\}$, the Laplace transform in time 
\begin{equation}
\label{eq:widehat:Delta}
\widehat{\Delta}(\lambda,x):=\int_0^\infty e^{-\lambda t}\,\Delta(t,x)\,\mathrm{d}t,\quad (\lambda,x)\in\cc_+\times[x_1,x_2], 
\end{equation}
solves
({using the fact that $\Delta(0,x)=0$ for $x \in [\overset{\circ}{x}_1,\overset{\circ}{x}_2]$})
\begin{equation}\label{ODE_Laplace}
\begin{split}
(\partial_{xx}\widehat{\Delta})(\lambda,\cdot)=2\lambda\widehat{\Delta}(\lambda,\cdot)\quad\text{on}\quad[\overset{\circ}{x}_1,\overset{\circ}{x}_2],
\end{split}
\end{equation} 
for all $\lambda\in\cc_+$. The solution of the linear \textit{ordinary} differential equation \eqref{ODE_Laplace} reads
\begin{equation}
\label{eq:widehat:Delta:cc}
\begin{split}
&\widehat{\Delta}(\lambda,x)=C_1(\lambda)\,e^{\sqrt{2\lambda}x}+C_2(\lambda)\,e^{-\sqrt{2\lambda}x}, \\
&C_1(\lambda)=\frac{1}{2\sinh(\sqrt{2\lambda}(\overset{\circ}{x}_1-\overset{\circ}{x}_2))}
\Big(\widehat{\Delta}(\lambda,\overset{\circ}{x}_1)\,e^{-\sqrt{2\lambda}\overset{\circ}{x}_2}-\widehat{\Delta}(\lambda,\overset{\circ}{x}_2)\,e^{-\sqrt{2\lambda}\overset{\circ}{x}_1}\Big), \\
&C_2(\lambda)=\frac{1}{2\sinh(\sqrt{2\lambda}(\overset{\circ}{x}_1-\overset{\circ}{x}_2))}
\Big(
-
\widehat{\Delta}(\lambda,\overset{\circ}{x}_1)\,e^{\sqrt{2\lambda}\overset{\circ}{x}_2}+\widehat{\Delta}(\lambda,\overset{\circ}{x}_2)\,e^{{\sqrt{2\lambda}}\overset{\circ}{x}_1}\Big),
\end{split}
\end{equation}
where $\sqrt{2\lambda}$ is the principal square root of $2\lambda$. 
It is easy to see that, for any $\lambda\in\cc_+$, $\widehat{\Delta}(\lambda,\cdot)$  {extends to a holomorphic function on} $(\overset{\circ}{x}_1,\overset{\circ}{x}_2)+i\rr$, where $i^2=-1$. 

\medskip

\textit{Third Step.} Next, we apply the complex Laplace inversion formula, e.g., in the form of 
 {\cite[chapter 19]{Rudin}}, to find
\begin{equation}\label{Lapl_inv}
\Delta(t,x)=\frac{1}{2\pi i}\int_{r-i\infty}^{r+i\infty} e^{t\lambda}\,\widehat{\Delta}(\lambda,x)\,\mathrm{d}\lambda,\quad(t,x)\in(0,\infty)\times[\overset{\circ}{x}_1,\overset{\circ}{x}_2],
\end{equation}
for all $r>0$, with the integral on the right-hand side of \eqref{Lapl_inv} being absolutely convergent. {The proof of \eqref{Lapl_inv} is in fact quite straightforward. Whenever $\lambda= r + i v$, \begin{equation*}
\widehat{\Delta}(r + i v,x)=\int_{-\infty}^\infty 
e^{-i v t}\,
 {\mathbf 1}_{(0,\infty)}(t) e^{-r t} \Delta(t,x)\,\mathrm{d}t.
\end{equation*}
Up to the scaling factor $\sqrt{2\pi}$, the function 
$v \mapsto \widehat{\Delta}(r+iv,x)$ is the Fourier transform of the integrable function 
$t \mapsto {\mathbf 1}_{(0,\infty)}(t) e^{-rt } \Delta(t,x)$. Formula 
\eqref{Lapl_inv} then follows if we can prove that $v \mapsto \widehat{\Delta}(r+iv,x)$ is integrable. 
This is where the regularity properties of $\Delta$ come in. 
Indeed, 
using the fact that $\Delta(0,x)=0$
and that $\partial_{t} \Delta(0,x)= \tfrac12 \partial_{xx} \Delta(0,x) =0$
 for the value of $x$ in consideration, we have, for $\lambda \in \cc_{+}$,
\begin{equation}
\label{eq:bound:hat Delta}
\vert \widehat{\Delta}(\lambda,x) \vert =  \biggl\vert \frac1{\lambda^2} \int_0^\infty e^{-\lambda t}\,\partial_{tt} \Delta(t,x)\,\mathrm{d}t \biggr\vert \leq 
\frac1{\vert \lambda \vert^2} \frac{1}{\mathrm{Re}\,\lambda} \sup_{x \in [\overset{\circ}{x}_1,\overset{\circ}{x}_2]} \sup_{t \geq 0} \vert \partial_{tt} \Delta(t,x) \vert,
\end{equation}
which implies that the right-hand side of \eqref{Lapl_inv} is absolutely convergent.}

{Our next goal is to use \eqref{Lapl_inv}
to extend $\Delta(t,\cdot)$ analytically to a complex neighborhood of
$(\overset{\circ}{x}_1,\overset{\circ}{x}_2)$ for a given $t >0$, that is, to extend $\Delta$ to pairs
$(t,x) \in (0,\infty) \times \cc$ such that $\text{Re} \ x \in 
(\overset{\circ}{x}_1,\overset{\circ}{x}_2)$
and 
$|\text{Im} \, x|$ is small (but possibly non-zero). 
This is in fact not so straightforward because, at this stage of the proof, nothing guarantees \textit{a priori}
that 
the right-hand side of 
\eqref{Lapl_inv} is absolutely convergent whenever $x$ has a non-trivial imaginary part.}

{In the fourth step below, we prove that, for any 
$\varepsilon 
\in (0,(\overset{\circ}{x}_2-\overset{\circ}{x}_1)/2)$, 
$\lambda \in \cc$ such that 
$\text{Re}(\lambda) \geq 1$,
and $x \in \cc$ such that 
$\text{Re}(x) \in (\overset{\circ}{x}_1+\varepsilon,\overset{\circ}{x}_2-\varepsilon)$
and $\vert \text{Im} \ x\vert \leq  {\varepsilon/2}$, 
\begin{equation}
\label{eq:exp:bound:5}
\vert
\widehat{\Delta}(\lambda,x) \vert \leq
\frac{C}{\vert \lambda \vert^2} 
\exp \Bigl( - \frac{\varepsilon}4  \sqrt{\vert \lambda \vert} \Bigr).
\end{equation}
Observing from \eqref{eq:widehat:Delta:cc} that the integrand in the right-hand side of
\eqref{Lapl_inv} is  holomorphic in $x \in \cc_{+}$ and 
using the Cauchy representation formula for holomorphic functions 
together with the Lebesgue differentiation theorem under the integral sign,  
we deduce that $\Delta(t,\cdot)$ and its time derivatives $\partial_{t}^{k} \Delta(t,\cdot)$, $k \geq 1$, are holomorphic on the domain $\{x \in \cc : 
\ 
\text{Re}(x) \in 
(\overset{\circ}{x}_1+\varepsilon,\overset{\circ}{x}_2-\varepsilon),
\ \vert \text{Im} \ x\vert \leq \varepsilon/2\}$. Moreover, for any $k \in {\mathbb N}$, the function $\partial_{t}^k \Delta$ is continuous on $\{(t,x) \in (0,\infty)\times {\mathbb C}\!:\,\text{Re}(x)\in 
(\overset{\circ}{x}_1+\varepsilon,\overset{\circ}{x}_2-\varepsilon),\,
\vert \text{Im} \ x\vert < {\varepsilon/2}\}$ and, by Cauchy's formula, the same is true for all the derivatives 
$\partial_{t}^k \partial_{x}^\ell \Delta$, $k,\ell \in {\mathbb N}$.}  Choosing $\varepsilon>0$ as small as needed, this proves in particular the desired real analyticity of $\Delta(t,\cdot)$ in $x$ on $(\overset{\circ}{x}_1,\overset{\circ}{x}_2)$.
\medskip

{\textit{Fourth Step.}
We now prove the desired strengthening \eqref{eq:exp:bound:5} of 
\eqref{eq:bound:hat Delta}. 

Our first observation is that, in 
\eqref{eq:Delta}, $\partial_{x} \Delta$ is bounded on the whole $[0,\infty) \times [\overset{\circ}{x}_1,\overset{\circ}{x}_2]$. This follows from the interior gradient estimates for the heat equation (see e.g. \cite[chapter IV, Theorem 10.1]{Lady}) and from the fact that $\Delta$ itself is globally bounded on the whole 
$[0,\infty) \times [x_{1},x_{2}]$. By induction, the same holds for higher order derivatives. We deduce that $\partial_{t} \partial_{x} \Delta$ and $\partial_{tt} \partial _{x} \Delta$ are also bounded on $[0,\infty) \times [\overset{\circ}{x}_1,\overset{\circ}{x}_2]$. 
As a consequence, we can differentiate under the integral in the right-hand side of 
\eqref{eq:widehat:Delta} and obtain a similar representation formula for 
$\partial_{x} 
\widehat{\Delta}(\lambda,x)$. Duplicating the proof 
of 
\eqref{eq:bound:hat Delta}, we conclude that there exists a constant $C$ such that 
\begin{equation*}
\bigl\vert \partial_{x} \widehat{\Delta}(\lambda,x) \bigr\vert \leq \frac{C}{\vert \lambda \vert^2\,\text{Re} \ \lambda}, \quad \lambda \in \cc_{+},
\ x \in [\overset{\circ}{x}_1,\overset{\circ}{x}_2].
\end{equation*}

Next, we differentiate the first line in 
\eqref{eq:widehat:Delta:cc} with respect to $x \in [\overset{\circ}{x}_1,\overset{\circ}{x}_2]$. We deduce that 
\begin{equation*}
\sqrt{2 \lambda} 
\widehat{\Delta}(\lambda,x) + \partial_{x}
\widehat{\Delta}(\lambda,x) = 2 \sqrt{2 \lambda} C_{1}(\lambda) e^{\sqrt{2\lambda} x}. 
\end{equation*}
Modifying the value of the constant $C$ if necessary, we obtain, for $\text{Re}(\lambda) \geq 1$, 
\begin{equation}
\label{eq:exp:bound:1}
\bigl\vert  C_{1}(\lambda) e^{\sqrt{2\lambda} x} \bigr\vert \leq \frac{C}{\vert \lambda \vert^2}, \quad 
 x \in [\overset{\circ}{x}_1,\overset{\circ}{x}_2]
\end{equation}
and, similarly, 
\begin{equation}
\label{eq:exp:bound:2}
\bigl\vert  C_{2}(\lambda) e^{\textcolor{blue}{-}\sqrt{2\lambda} x} \bigr\vert \leq \frac{C}{\vert \lambda \vert^2}, \quad 
 x \in [\overset{\circ}{x}_1,\overset{\circ}{x}_2].
\end{equation}
Next, we consider $x \in \cc$ with $\text{Re} \ x \in [\overset{\circ}{x}_1,\overset{\circ}{x}_2]$ and write
\begin{equation}
\label{eq:exp:bound:3}
\widehat{\Delta}(\lambda,x)=C_1(\lambda)\,e^{\sqrt{2\lambda}\overset{\circ}{x}_2}
\,e^{\sqrt{2\lambda}(x-\overset{\circ}{x}_2)}
+C_2(\lambda)\,e^{-\sqrt{2\lambda}\overset{\circ}{x}_1}
\,e^{\sqrt{2\lambda}(\overset{\circ}{x}_1-x)}.
\end{equation}
For $\lambda \in \cc$ with $\text{Re}(\lambda) \geq 1$, we write
$\sqrt{2\lambda} = a + ib$, with $a>0$. Then, $a^2-b^2 \geq 2$ and
\begin{equation*}
\text{Re} \bigl( \sqrt{2 \lambda} (x- \overset{\circ}{x}_2) \bigr) = a \, 
\text{Re} ( x- \overset{\circ}{x}_2) - b\,\text{Im}(x).   
\end{equation*}
If $\vert \text{Im} \ x\vert \leq  
\text{Re} ( \overset{\circ}{x}_2-x)/2$, then
\begin{equation*}
\text{Re} \bigl( \sqrt{2 \lambda} (x- \overset{\circ}{x}_2) \bigr) = a \, 
\text{Re} ( x- \overset{\circ}{x}_2) - b \, \text{Im}(x)
\leq - \frac{a}{2}\,    
\text{Re} (  \overset{\circ}{x}_2-x)
\leq - \frac1{4} \sqrt{\vert \lambda \vert}\, \text{Re} (  \overset{\circ}{x}_2-x).
\end{equation*}
By the same argument,
if $\vert \text{Im}(x)\vert \leq  
\text{Re} (x- \overset{\circ}{x}_1)/2$, then
\begin{equation*}
\text{Re} \bigl( \sqrt{2 \lambda} ( \overset{\circ}{x}_1-x) \bigr) 
\leq - \frac14 \sqrt{\vert \lambda \vert}\, \text{Re} ( x- \overset{\circ}{x}_{1}).
\end{equation*}
Hence, the conclusion is that, for $\varepsilon \in (0,(\overset{\circ}{x}_2-\overset{\circ}{x}_1)/2)$,
$\text{Re}(x) \in (\overset{\circ}{x}_1+\varepsilon,\overset{\circ}{x}_2-\varepsilon)$
and $\vert \text{Im} \ x\vert \leq  {\varepsilon/2}$, 
\begin{equation*}
\text{Re} \bigl( \sqrt{2 \lambda} (x- \overset{\circ}{x}_2) \bigr) \leq - \frac{\varepsilon}4 \sqrt{\vert \lambda \vert}, 
\quad 
\text{Re} \bigl( \sqrt{2 \lambda} ( \overset{\circ}{x}_1-x) \bigr) \leq - \frac{\varepsilon}4 \sqrt{\vert \lambda \vert}, 
\end{equation*}
which, along with
\eqref{eq:exp:bound:1},
\eqref{eq:exp:bound:2}
and 
\eqref{eq:exp:bound:3},
yields
\begin{equation*}
\vert
\widehat{\Delta}(\lambda,x) \vert \leq
\frac{C}{\vert \lambda \vert^2} 
\exp \Bigl( - \frac{\varepsilon}4  \sqrt{\vert \lambda \vert} \Bigr).
\end{equation*}
This completes the proof.}
\end{proof}

\subsection{Further properties of the gradient}
\label{subse:3.3}

By combining 
Propositions	\ref{prop:reg:barrier}
and 
\ref{prop:analytic.1}, we obtain the following proposition.

\begin{proposition}\label{le:px.cont}
Fix an arbitrary time horizon $T>0$ together with a time $t \in [0,T]$ and assume that $\rho(t,\cdot)$ is 
locally monotone in a right neighborhood of any point in $[0,\infty)$.
Then, $\partial_x p$ is continuous on $(t,T_{reg}(t)) \times[0,\infty)$ and satisfies, on $(t,T_{reg}(t))\times(0,\infty)$, 
\begin{equation}
\label{PDE:partialxp}
\partial_t(\partial_x p)=\frac{1}{2}\partial_{xx}(\partial_x p)+\dot{\Lambda}_t\,\partial_x(\partial_x p).
\end{equation}
\end{proposition}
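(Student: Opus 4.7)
The plan is to combine the two preceding propositions: Proposition \ref{prop:reg:barrier} already furnishes continuity of $\partial_x p$ up to the boundary on nested closed sub-strips, while Proposition \ref{prop:analytic.1} supplies the interior smoothness needed to differentiate the Fokker--Planck equation once more.

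First, I would simply observe that, for every $\eta \in (0,(T_{reg}(t)-t)/2)$, Proposition \ref{prop:reg:barrier} yields the continuity (and boundedness) of $\partial_x p$ on $[t+\eta,T_{reg}(t)-\eta] \times [0,\infty)$. Since $(t,T_{reg}(t)) = \bigcup_{\eta>0} [t+\eta,T_{reg}(t)-\eta]$, continuity of $\partial_x p$ on $(t,T_{reg}(t)) \times [0,\infty)$ follows at once.

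Next, to establish \eqref{PDE:partialxp}, I would start from the Fokker--Planck equation for $p$ itself on $(t,T_{reg}(t)) \times (0,\infty)$. This is obtained by differentiating the identity $p(s,y) = u(s,y+\Lambda_s)$ in $s$: since $\Lambda \in C^1((t,T_{reg}(t)))$ by Proposition \ref{prop:reg:barrier} and $u \in C^\infty(\mathring{D})$ solves $\partial_t u = \tfrac12 \partial_{xx} u$ by Proposition \ref{prop:analytic.1}, the chain rule gives $\partial_s p = (\partial_t u) + \dot{\Lambda}_s (\partial_x u) = \tfrac12 \partial_{yy} p + \dot{\Lambda}_s \partial_y p$, with all derivatives of $u$ evaluated at $(s,y+\Lambda_s)$. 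Differentiating this equation once more in $y$ formally produces \eqref{PDE:partialxp}, provided we can interchange $\partial_s$ and $\partial_y$ applied to $p$.

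The only subtlety — and the step I would treat most carefully — is this commutation. Because $\Lambda$ is merely $C^1$, one cannot invoke a Schwarz-type theorem for $p$ directly in the $(s,y)$ variables. The way around this is to push all time derivatives back onto $u$: writing $\partial_s \partial_y p = \partial_s \bigl((\partial_x u)(s,y+\Lambda_s)\bigr) = (\partial_{tx}u) + \dot{\Lambda}_s (\partial_{xx} u)$ and $\partial_y \partial_s p = \partial_y \bigl((\partial_t u)(s,y+\Lambda_s) + \dot{\Lambda}_s (\partial_x u)(s,y+\Lambda_s)\bigr) = (\partial_{xt}u) + \dot{\Lambda}_s (\partial_{xx} u)$, the equality $\partial_s\partial_y p = \partial_y \partial_s p$ reduces to $\partial_{tx}u = \partial_{xt}u$, which holds by the joint $C^\infty$ smoothness of $u$ supplied by Proposition \ref{prop:analytic.1}. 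Using this identity together with $\partial_t u = \tfrac12\partial_{xx} u$ (so that $\partial_{tx}u = \tfrac12 \partial_{xxx}u = \tfrac12 \partial_{yyy} p$), the differentiated equation reads $\partial_s(\partial_y p) = \tfrac12 \partial_{yy}(\partial_y p) + \dot{\Lambda}_s \, \partial_y(\partial_y p)$ on $(t,T_{reg}(t)) \times (0,\infty)$, which is exactly \eqref{PDE:partialxp}.
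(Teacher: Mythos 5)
Your proposal is correct and takes essentially the same route as the paper: continuity follows from Proposition \ref{prop:reg:barrier} by exhausting $(t,T_{reg}(t))$ by closed sub-intervals, and the PDE is obtained by pushing time derivatives onto $u$ (where one has $C^\infty$ smoothness and the heat equation $\partial_t u = \tfrac12\partial_{xx}u$) and translating back to $p$. The paper computes $\partial_{tx}p(t,x)=\frac{\mathrm{d}}{\mathrm{d}t}\partial_x u(t,x+\Lambda_t)$ directly in one chain of equalities; you arrive at the identical result but spell out the commutation $\partial_s\partial_y p=\partial_y\partial_s p$ as a separate step — a helpful clarification rather than a different argument.
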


\begin{proof}
%
The first part is a straightforward consequence of 
Proposition	\ref{prop:reg:barrier}. Equation \eqref{PDE:partialxp}
follows from the following computation: 
\begin{equation}
\begin{split}
&\;\partial_{tx}p(t,x)=\frac{\mathrm{d}}{\mathrm{d}t}\partial_x u(t,x+\Lambda_t)
=\partial_{xt} u(t,x+\Lambda_t)+\partial_{xx}u(t,x+\Lambda_t)\,\dot{\Lambda}_t \\
& =\frac{1}{2}\partial_{xxx}u(t,x+\Lambda_t)+\partial_{xx}u(t,x+\Lambda_t)\,\dot{\Lambda}_t
=\frac{1}{2}\partial_{xxx}p(t,x)+\partial_{xx}p(t,x)(t,x)\,\dot{\Lambda}_t,
\end{split}
\end{equation}
which holds 
pointwise on $(t,T_{reg}(t))\times(0,\infty)$. 
\end{proof}


Next, we deduce the following result.

\begin{lemma}\label{le:lambda.pos}
Fix an arbitrary time horizon $T>0$ together with a time $t \in [0,T]$ and assume that $\rho(t,\cdot)$ is 
locally monotone in a right neighborhood of any point in $[0,\infty)$.
Then, for any $s\in(t,T_{reg}(t))$, we have $\dot{\Lambda}_s=\tfrac{\alpha}2 \partial_x p(s,0) >0$.
\end{lemma}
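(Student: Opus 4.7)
The plan is to establish the two statements in the lemma, namely the identity $\dot{\Lambda}_s = \tfrac{\alpha}{2}\partial_x p(s,0)$ and the strict positivity of this quantity, in turn.

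For the identity, I would start from $\Lambda_s = \alpha\,\PP(\tau\le s) = \alpha\bigl(1-\int_0^\infty p(s,y)\,\mathrm{d}y\bigr)$ and differentiate in $s$. Using the Fokker-Planck equation $\partial_s p = \tfrac{1}{2}\partial_{yy}p + \dot{\Lambda}_s\partial_y p$ on $(t,T_{reg}(t))\times(0,\infty)$ (a consequence of Propositions \ref{prop:reg:barrier}, \ref{prop:analytic.1}, \ref{le:px.cont}), integrating over $(0,\infty)$, and using $p(s,0)=0$ together with Gaussian decay of $p(s,\cdot)$ and $\partial_x p(s,\cdot)$ at infinity (coming from the stochastic bound $X_s \le X_{0-}+B_s$ and standard tail estimates), integration by parts yields
\[
\frac{\mathrm{d}}{\mathrm{d}s}\int_0^\infty p(s,y)\,\mathrm{d}y = -\tfrac{1}{2}\partial_x p(s,0).
\]
The exchange of $\mathrm{d}/\mathrm{d}s$ with the integral is justified by the local uniform bounds on $\partial_s p$ supplied by the PDE together with the boundedness of $p$, $\partial_x p$ and $\dot{\Lambda}$ on compact subintervals.

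For the strict positivity $\partial_x p(s,0)>0$, I plan to invoke Hopf's boundary-point lemma applied to the parabolic operator $\partial_t - \tfrac{1}{2}\partial_{xx} - \dot{\Lambda}_t\partial_x$ on $(t,T_{reg}(t))\times(0,\infty)$. Proposition \ref{prop:reg:barrier} gives the needed regularity: $\dot{\Lambda}$ is continuous on the time interval, while $p$ is $C^{1,2}$ in the interior, continuous up to $x=0$ (with continuous $\partial_x p$), and satisfies $p(\cdot,0)=0$ with $p\ge0$. To legitimately apply Hopf at a point $(s,0)$, I first need strict positivity of $p$ in a backwards parabolic neighborhood; this follows from the strong maximum principle, together with the observation that $p(s',\cdot)\not\equiv 0$ for any $s'\in(t,T_{reg}(t))$, since $\int_0^\infty p(s',y)\,\mathrm{d}y = \PP(\tau\ge s')>0$. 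The interior sphere condition at the boundary point $x=0$ is trivial in one dimension. Hopf's lemma then produces the strict inequality $-\partial_x p(s,0)<0$, i.e.\ $\partial_x p(s,0)>0$, which combined with the first step completes the proof.

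I expect the only mild issue to be verifying the applicability of the parabolic Hopf and strong maximum principles with a merely continuous drift coefficient $\dot{\Lambda}_t$; this is standard (see e.g.\ Friedman, \emph{Partial Differential Equations of Parabolic Type}, or the corresponding chapters of \cite{Lady}), so the argument is genuinely short once the regularity results of Sections \ref{se:2} and \ref{se:3} are in place.
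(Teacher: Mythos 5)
Your proposal is correct but takes a genuinely different route from the paper for the strict positivity. For the identity $\dot{\Lambda}_s = \tfrac{\alpha}{2}\partial_x p(s,0)$, the paper simply invokes \cite[Lemma 5.3]{DIRT1} (after verifying its hypotheses via the fourth step of the proof of Proposition \ref{prop:reg:barrier}), whereas you rederive it from scratch by integrating the Fokker--Planck equation; these are essentially interchangeable, and your integration-by-parts computation is sound given the decay and regularity facts you cite (though the integrability of $\partial_{yy}p$ over $(0,\infty)$ needed for the exchange of derivative and integral is glossed over slightly more than one would like --- a Gaussian-tail argument as in Lemma \ref{lem:3.8} handles it). For the strict positivity $\partial_x p(s,0)>0$, the paper takes the explicitly probabilistic path: it rewrites the target as $\tfrac{\mathrm{d}^2}{\mathrm{d}x^2}\big\vert_{x=0}\,\PP(\inf_{r\in[0,s]}X_r>0,\,X_s\in(0,x])$, lower bounds this by restarting the solution from $p(t+\eta,\cdot)$, and evaluates it with the reflection-principle formula for the joint law of a Brownian motion and its running minimum; the result is a manifestly positive integral $\int p(t+\eta,y)f(y-\widetilde{\Lambda}_s)\,\mathrm{d}y$. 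You instead appeal to the strong parabolic maximum principle (using $\int_0^\infty p(s',\cdot)\,\mathrm{d}y=\PP(\tau>s')>0$ to rule out internal zeros) and the parabolic Hopf boundary-point lemma. Both are valid. Your PDE route is arguably cleaner and more familiar to the Stefan-problem community, but it leans on the boundary regularity ($p$ and $\partial_x p$ continuous up to $x=0$, $\dot{\Lambda}$ continuous) established in Proposition \ref{prop:reg:barrier} to legitimately apply Hopf; the paper's explicit probabilistic computation is fully self-contained and avoids invoking any comparison machinery at the price of being less transparent about \emph{why} the derivative must be positive.
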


\begin{proof}
First, we verify the identity 
$\dot{\Lambda}_s=\tfrac{\alpha}2 \partial_x p(s,0)$ for 
$s \in (t,T_{reg}(t))$. 
To do so, we follow the proof of Proposition \ref{prop:reg:barrier}. For any $\eta \in (0,(T_{reg}(t)-t)/2)$, we recall from the fourth step of the proof that the initial sub-density $p(t+\eta,\cdot)$ satisfies  
	\cite[Lemma 5.3]{DIRT1} (which is stated in a slightly different framework). Choosing therein $\dot{\Lambda}$ as a drift,
the latter result states that, for $s \in (t+\eta,T_{reg}(t)-\eta)$, 
\begin{equation*}
\frac{\mathrm{d}}{\mathrm{d}s}\,{\mathbb P}( \tau \leq s  ) = \frac{1}{2} \partial_{x} p(s,0),
\end{equation*} 
which yields  
\begin{equation*}
\dot{\Lambda}_{s} = \frac{\alpha}{2} \partial_{x} p(s,{0}
).
\end{equation*}

For the second part of the statement, it suffices to show that, for every $s\in(t,T_{reg}(t))$, 
\begin{equation}\label{cdf_quad}
\frac{\mathrm{d}^2}{\mathrm{d}x^2}\Big\vert_{x=0}\,\PP\big(\inf_{r\in[0,s]} X_r>0,\,X_s\in(0,x]\big) >0. 
\end{equation}
%
%
To verify \eqref{cdf_quad} we employ the explicit formula for the joint distribution of the value and the running maximum of a standard Brownian motion (see e.g. \cite[chapter 2, Proposition 8.1]{KaSh}) and find that, for any $\eta \in (0,(T_{reg}(t)-t)/2)$ and $s \in (t+\eta,T_{reg}(t)- \eta)$, the following holds with $\widetilde{\Lambda}:=\Lambda-\Lambda_{t+\eta}$:
\begin{equation}
\begin{split}
& \;\PP\big(\inf_{r\in[0,s]} X_r>0,\,X_s\in(0,x]\big) \\
& \ge \int_{\widetilde{\Lambda}_s}^\infty p(t+\eta,y)\;\PP\big(y+\inf_{r\in[t+\eta,s]} (B_r-B_{t+\eta}) \ge \widetilde{\Lambda}_s,\;y+B_s-B_{t+\eta}\in(\widetilde{\Lambda}_s,\widetilde{\Lambda}_s+x]\big)\,\mathrm{d}y \\
& = \int_{\widetilde{\Lambda}_s}^\infty p(t+\eta,y)\int_{y-\widetilde{\Lambda}_s-x}^{y-\widetilde{\Lambda}_s} \int_0^{y-\widetilde{\Lambda}_s} \frac{2(2b-a)}{\sqrt{2\pi (s-t-\eta)^3}}\,e^{-\frac{(2b-a)^2}{2(s-t-\eta)}}\,\mathrm{d}b\,\mathrm{d}a\,\mathrm{d}y \\
&=:\int_{\widetilde{\Lambda}_s}^\infty p(t+\eta,y)\,{F}(x,y)\,\mathrm{d}y. 
\end{split}
\end{equation}
{Next, we write (with ${f}$ defined implicitly by the above):
\begin{equation*}
\begin{split}
F(x,y) = \int_{y-\widetilde{\Lambda}_s-x}^{y-\widetilde{\Lambda}_s} \int_{0}^{y-\widetilde{\Lambda}_s} f(2b-a) \,\mathrm{d}b\,\mathrm{d}a
&= \int_{0}^x \int_{-(y-\widetilde{\Lambda}_s)/2}^{(y-\widetilde{\Lambda}_s)/2}f(2b+a) \,\mathrm{d}b\,\mathrm{d}a \\
&=
\int_{0}^x \int_{-(y-\widetilde{\Lambda}_s)/2}^{(y-\widetilde{\Lambda}_s)/2} \int_{0}^a f'(2b+v) 
\,\mathrm{d}v \, \mathrm{d}b\,\mathrm{d}a,
\end{split}
\end{equation*}
where the last equality follows from the fact that $f$ is odd. Therefore, Fubini's theorem yields:
\begin{equation*}
\begin{split}
\int_{\widetilde{\Lambda}_s}^\infty p(t+\eta,y)\,{F}(x,y)\,\mathrm{d}y
&= \int_{0}^x \int_{0}^a \int_{{\widetilde{\Lambda}_{s}}}^{\infty} p(t+\eta,y) \int_{-(y-\widetilde{\Lambda}_s)/2}^{(y-\widetilde{\Lambda}_s)/2} 
f'(2b+v) 
\,\mathrm{d}b \, \mathrm{d}y \,\mathrm{d}v \,\mathrm{d}a
\\
&=  \int_{0}^x \int_{0}^a \int_{{\widetilde{\Lambda}_{s}}}^{\infty} p(t+\eta,y)\, 
\frac{ f( y - \tilde \Lambda_{s} +v) - 
f( - (y - \tilde \Lambda_{s}) +v)}2 \,\mathrm{d}y \,\mathrm{d}v \,\mathrm{d}a.
\end{split}
\end{equation*}
Thus, we obtain 
\begin{equation*}
\frac{d^2}{dx^2}\Big\vert_{x=0}\,\PP\big(\inf_{r\in[0,s]} X_r>0,\,X_s\in(0,x]\big)
\geq \int_{{\widetilde{\Lambda}_{s}}}^{\infty} p(t+\eta,y)\, 
f( y - \tilde \Lambda_{s})\,\mathrm{d}y. 
\end{equation*}
It remains to observe that
\begin{equation*}
\int_{\widetilde{\Lambda}_s}^{\infty} p(t+\eta,y)\, 
 f( y - \widetilde{\Lambda}_{s}) \, \mathrm{d}y>0,
\end{equation*}
in view of the probabilistic interpretation of the latter integral.} 
\end{proof}

We conclude this section by verifying the absolute integrability of $\partial_xp(t,\cdot)$.

\begin{lemma}
\label{lem:3.8}
For any $t>0$, 
$\lim_{x \to \infty} p(t,x)=0$ and
$\partial_xp(t,\cdot)\in L^1((0,\infty))$.
\end{lemma}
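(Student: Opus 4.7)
Plan for the proof.

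For the first statement, the plan is to exploit a stochastic domination by free Brownian motion. Since $X_t=X_{0-}+B_t-\Lambda_t$ on $\{\tau \geq t\}$, for every Borel $A\subset(0,\infty)$ we have
$$\PP(X_t\in A,\,\tau\geq t) \leq \PP(X_{0-}+B_t-\Lambda_t\in A).$$
The density of $X_{0-}+B_t-\Lambda_t$ at $x$ equals $(f\ast g_t)(x+\Lambda_t)$, where $g_t$ denotes the centered Gaussian kernel with variance $t$ and $f$ is extended by zero on $(-\infty,0)$. Combining the almost everywhere bound on sub-densities with the continuity of both sides on $(0,\infty)$ (continuity of $p(t,\cdot)$ being provided by Proposition \ref{prop:analytic.1}) yields the pointwise inequality $p(t,x)\leq (f\ast g_t)(x+\Lambda_t)$ for $x>0$. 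Because $f\in L^1(\RR)$ (as the density of a probability measure) and $g_t\in C_0(\RR)$, the convolution $f\ast g_t$ belongs to $C_0(\RR)$, which gives $\lim_{x\to\infty} p(t,x)=0$.

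For the second statement, I would fix $t_0\in(0,t)$ close enough to $t$ so that, by Proposition \ref{prop:reg:barrier}, $\Lambda$ is $C^1$ on $[t_0,t]$ with $K:=\sup_{[t_0,t]}\dot{\Lambda}_r<\infty$. Starting from the Duhamel representation \eqref{eq:formula:partialxpn:2} of $\partial_x p(t,x)$ with initial time $t_0$, and using the elementary Gaussian estimate $\int_0^\infty |\partial_y q(\tau,z,x)|\,\mathrm{d}x\leq C/\sqrt{\tau}$ (immediate from $q(\tau,y,x)=g(\tau,x-y)-g(\tau,x+y)$), integration in $x\in(0,\infty)$ yields the scalar Volterra inequality
$$\phi(t)\leq \frac{C}{\sqrt{t-t_0}} + CK\int_{t_0}^t \frac{\phi(r)}{\sqrt{t-r}}\,\mathrm{d}r, \qquad \phi(r):=\|\partial_x p(r,\cdot)\|_{L^1((0,\infty))}.$$
Iterating (the iterated kernel $\int_{t_0}^t (t-r)^{-1/2}(r-s)^{-1/2}\,\mathrm{d}r=\pi$ makes the resulting series absolutely convergent) then gives $\phi(t)<\infty$, provided $\phi$ is known a priori to be locally integrable on $[t_0,t]$.

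The main obstacle is precisely this a priori integrability, since the pointwise bound \eqref{eq:bound:partialxp} is uniform in $x$ but not summable. I would resolve it by passing through the mollified approximations $(\Lambda^n)_{n\geq 1}$ constructed in the proof of Proposition \ref{prop:reg:barrier}, choosing the mollification so that each $\Lambda^n$ inherits the Lipschitz constant $K$ on $[t_0,t]$. For each $n$, the sub-density $p^n(r,\cdot)$ is that of a diffusion with bounded smooth drift $-\dot{\Lambda}^n_r$, hence enjoys a Gaussian tail in $x$, as does $\partial_x p^n(r,\cdot)$; consequently $\phi^n(r):=\|\partial_x p^n(r,\cdot)\|_{L^1}<\infty$ for every $r\in[t_0,t]$. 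The same Volterra inequality as above holds for $\phi^n$ with the same constants, so the iteration yields $\sup_{r\in[t_0,t]}\phi^n(r)\leq C'$ uniformly in $n$. Fatou's lemma, combined with the pointwise convergence $\partial_x p^n\to\partial_x p$ established in the proof of Proposition \ref{prop:reg:barrier}, then gives $\phi(t)\leq C'<\infty$, completing the proof.
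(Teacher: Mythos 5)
Your argument for the first part is correct, though it takes a slightly different (and arguably cleaner) route than the paper: you use the domination $p(t,x)\le (f\ast g_{t})(x+\Lambda_{t})$ together with $L^1\ast C_0\subset C_0$, whereas the paper conditions on $X_t$ and writes $p(t,y)=g(t,x-y-\Lambda_t)\,\hat p(t,y)$, where $\hat p(t,y)$ is the probability that a Brownian bridge stays positive. Both are legitimate.

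The second part, however, has a genuine gap at the very first step. You assert that one can choose $t_0\in(0,t)$ so that, by Proposition \ref{prop:reg:barrier}, $\Lambda$ is $C^1$ on $[t_0,t]$ with bounded derivative. This is not available. Proposition \ref{prop:reg:barrier} (a) carries the hypothesis that $\rho(t_0,\cdot)$ be locally monotone in a right neighborhood of each point — a hypothesis that is only verified much later, in Lemma \ref{zero_number}, and is \emph{not} among the standing assumptions of Section \ref{se:3} (Lemma \ref{lem:3.8} carries no such hypothesis, unlike Proposition \ref{le:px.cont} and Lemma \ref{le:lambda.pos} in the same subsection); and (b) even under that hypothesis only yields $\Lambda\in C^1$ on the open interval $(t_0,T_{\mathrm{reg}}(t_0))$, with no guarantee that $T_{\mathrm{reg}}(t_0)>t$ nor that $\Lambda$ is $C^1$ up to and including $t$. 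In general, $t$ may be a jump time of $\Lambda$, or jump times may accumulate at $t$ from the left (a possibility the paper explicitly allows in Remark \ref{rmk.conn1}), in which case no $t_0<t$ of the kind you need exists. Consequently the Duhamel representation \eqref{eq:formula:partialxpn:2} with initial time $t_0$, and the ensuing Volterra iteration, cannot be set up. The paper avoids this completely by conditioning on the terminal value: it writes $p(t,y)=g(t,x-y-\Lambda_t)\,\hat p(t,y)$ with $\hat p(t,\cdot)$ a $[0,1]$-valued nondecreasing differentiable function, so that $\partial_y p(t,\cdot)=\partial_y g(t,x-\cdot-\Lambda_t)\hat p(t,\cdot)+g(t,x-\cdot-\Lambda_t)\partial_y\hat p(t,\cdot)$, and each summand is manifestly in $L^1((0,\infty))$ — with uniform bounds, so the general initial law can be integrated out via Fubini. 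This requires no regularity of $\Lambda$ whatsoever and is the key idea your proposal is missing.
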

\begin{proof}
Let $(X,\Lambda)$ be a physical solution of \eqref{Stefan_prob}.
First, we assume that $X_{0-}=x>0$ and, for any $0<y_1<y_2<\infty$, obtain:
$$
\PP\bigl(X_t\,\bone_{\{\inf_{r\in[0,t]} X_r>0\}} \in [y_1,y_2]\bigr)
=\PP\bigl(\inf_{r\in[0,t]} X_r>0\,\big\vert\, X_t\in[y_1,y_2]\bigr) \,
\PP\bigl( X_t\in[y_1,y_2] \bigr).
$$
Recall that a Brownian motion started at $x$ and conditioned to be equal to $z$ at the terminal time $t$ is a Brownian bridge from $x$ to $z$ on $[0,t]$.
Denote as before by $g(t,x-\cdot)$ the density at time $t$ of a Brownian motion started at $x$, and take a limit as $y_1,y_2\rightarrow y>0$, to obtain
$$
p(t,y) = g(t,x -y-\Lambda_t)\, \PP \Bigl( \inf_{r\in[0,t]} \bigl(x + B_r + r(y + \Lambda_t - x - B_t)/t - \Lambda_r\bigr)>0\Bigr).
$$
({See also \cite[Proposition 4]{GasSotVal} for a direct derivation of this formula.})
By Proposition \ref{prop:analytic.1}, we conclude that $p(t,\cdot)\in C^{\infty}((0,\infty))$. Moreover, $\lim_{x \to \infty} p(t,x)=0$. 
Next, we notice that the function
$$
\hat{p}(t,\cdot) : (0,\infty) \ni y \mapsto \PP \Bigl( \inf_{r\in[0,t]} \bigl(x + B_r + r(y + \Lambda_t - x - B_t)/t - \Lambda_r\bigr)>0\Bigr)
$$
is $[0,1]$-valued and non-decreasing. Since $\hat{p}(t,\cdot)$ is continuously differentiable (thanks to $p(t,\cdot)\in C^{\infty}((0,\infty))$) and $g(t, \textcolor{blue}{\cdot})$ is smooth, 
we have $\partial_x\hat{p}(t,\cdot)\in L^1((0,\infty))$, with the norm bounded uniformly over $t>0$.

For a general $X_{0-}$, we have
$$
p(t,y)=
\int_0^{\infty} g(t,x-y-\Lambda_{t})\,\PP \Bigl( \inf_{r\in[0,t]} \bigl(x + B_r + r(y + \Lambda_t - x - B_t)/t - \Lambda_r\bigr)>0\Bigr)\,\mu(\mathrm{d}x),
$$
where $\mu$ is the distribution of the positive part of $X_{0-}$.
Using the observation at the end of the preceding paragraph, as well as the fast decay (in $x$) 
of $g(t,x)$ and of its $x$-derivative, we easily obtain the statement of the lemma by means of Fubini's theorem. 
\end{proof}


\section{Proof of Theorem \ref{thm1}}
\label{se:4}
In addition to the results of Sections \ref{se:2} and \ref{se:3}, we need the next lemma for our proof of Theorem \ref{thm1}.

\subsection{The number of monotonicity-changing points}

The next lemma is at the core of our analysis.

\begin{lemma}\label{zero_number}
Let $X_{0-}$ admit a density $f$ on ${(0,\infty)}$ 
that changes monotonicity finitely often on compacts {of $[0,\infty)$}. Then, the same applies to $X_{t-}\,\mathbf{1}_{\{\tau\ge t\}}$, $t>0$, for every physical solution $(X,\Lambda)$ of \eqref{Stefan_prob} started from $X_{0-}$. 
\end{lemma}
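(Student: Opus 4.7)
The plan is to combine the real analyticity of $\rho(t,\cdot)$ on $(0,\infty)$ established in Proposition \ref{prop:analytic.1} with a Sturmian analysis of the zero set of $\partial_x u$ inside $\mathring{D}$, in the spirit of \cite[proof of theorem 5.1]{AF} announced in the introduction. Since $\rho(t,\cdot)$ is real analytic on $(0,\infty)$ (and not identically zero, the opposite case being trivial), its derivative $\partial_x \rho(t,\cdot)$ has only isolated zeros in $(0,\infty)$. This already yields finitely many monotonicity changes of $\rho(t,\cdot)$ on every $[\varepsilon,M]$ with $\varepsilon>0$, so the remaining task is to preclude accumulation of monotonicity changes at $x=0$.

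For the latter, I would pass to $u(s,x)=p(s,x-\Lambda_s)$ on $\mathring{D}$ and study $v:=\partial_x u$, which by Proposition \ref{prop:analytic.1} satisfies $\partial_s v=\tfrac12\partial_{xx}v$ pointwise on $\mathring{D}$. Via the identity $\rho(t,x)=u(t,x+\Lambda_{t-})$, valid for $x>\Lambda_t-\Lambda_{t-}$, monotonicity changes of $\rho(t,\cdot)$ on $(\Lambda_t-\Lambda_{t-},M]$ correspond bijectively to sign changes of $v(t,\cdot)$ on a bounded sub-interval of $(\Lambda_t,\infty)$; the residual interval $(0,\Lambda_t-\Lambda_{t-}]$, non-empty only when $t$ is a jump time of $\Lambda$, is controlled by the analyticity of $\rho(t,\cdot)$ on $(0,\infty)$ together with a boundary inspection as $x\downarrow 0$. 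The heart of the argument is therefore to bound the sign changes of $v(t,\cdot)$ on bounded sub-intervals of $(\Lambda_t,\infty)$.

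Here I would invoke a Sturm/Angenent-type principle: on a space-time rectangle in which $v$ is a classical solution of the heat equation and $v$ does not vanish on the lateral boundary, the number of sign changes of $v(s,\cdot)$ in the cross-section is non-increasing in $s$. Initially $v(0,\cdot)=f'$ has finitely many sign changes on every compact by hypothesis, and I would propagate this finiteness forward through the smooth portions of $\mathring{D}$, exploiting the positivity $v(s,\Lambda_s)=(2/\alpha)\dot\Lambda_s>0$ granted by Lemma \ref{le:lambda.pos} on regular intervals to prevent new sign changes from being spawned at the left boundary. A jump of $\Lambda$ at time $s$ acts merely as a spatial translation of the law of $X_s$, so no sign changes are created or destroyed across a jump; concatenating the Sturmian bounds across the (at most countable) partition of $[0,t]$ into smooth sub-intervals and jump times would yield the required finiteness at time $t$.

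The principal obstacle is the rigorous deployment of the Sturmian argument in the present irregular setting, where the left boundary of $\mathring{D}$ is only right-continuous and where blow-ups and jump times may a priori accumulate. Classical Angenent-type theorems assume a fixed smooth domain with non-vanishing boundary data, so I would need to patch together local applications on smooth sub-rectangles, verify that no additional sign changes are introduced as $\Lambda$ transitions in and out of the singular regime, and handle the subtle circularity that Lemma \ref{le:lambda.pos} presupposes precisely the kind of local monotonicity of $\rho$ that Lemma \ref{zero_number} is meant to provide. This interdependence would most likely be broken by a bootstrap along an appropriate time partition, anchored at $t=0$ by the given regularity of $f$ and extended inductively across the smooth and singular pieces of $\Lambda$.
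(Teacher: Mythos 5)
Your proposal correctly identifies the relevant reference (\cite{AF}) and the relevant object of study — the zero set of $v=\partial_x u$ — and you have honestly flagged the circularity with Lemma~\ref{le:lambda.pos}. But the argument as outlined would not close, for three concrete reasons that the paper has to work quite hard to handle.

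First, the circularity is not resolved by a ``bootstrap along an appropriate time partition'' if that partition is taken forward in time from $t=0$: jump times and blow-up times of $\Lambda$ may accumulate, and a forward induction over countably many smooth pieces does not terminate. The paper avoids this entirely by a contradiction argument: it defines $t_*$ as the infimum of times where property (P) fails, observes that (P) then holds on $[t_*/2,t_*)$ (so Lemma~\ref{le:lambda.pos} is legitimately available there), and derives a uniform bound $M$ on the number of monotonicity changes of $p(s,\cdot)$ over $s\in[t_*/2,t_*)$, which passes to the limit $s\uparrow t_*$ by a diagonalization and convexity argument. Without some version of this ``last good time'' structure, the interdependence you point out remains fatal.

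Second, the Sturm/Angenent non-increase principle as you invoke it requires non-vanishing data on the \emph{lateral} boundaries of a rectangle. On the right boundary $\{x=M\}$, $v(\cdot,M)$ is a time-function and there is no a priori reason for it to be non-vanishing on a whole time interval. The paper does not invoke the off-the-shelf Sturm statement; it instead uses the \emph{local} description of the zero set of a caloric function (Lemma~\ref{lem:aux:1}, which is the Angenent-type ingredient, but extracted in a version that handles analyticity in $x$ only), and constructs a zigzag curve $\theta$ along which $\partial_x u$ has finitely many zeros. This replaces the vertical boundary $\{x=M\}$ and is not cosmetic.

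Third, and most substantially, the positivity $v(s,\Lambda_s+)>0$ controls the boundary only on regular intervals; the real danger is that two backward zero curves accumulate at the \emph{same} boundary point $(s_*,\Lambda_{s_*})$ where $\partial_x u$ is not known to be continuous or bounded. Ruling this out is the hardest step of the paper's proof, and it requires a bespoke Feynman--Kac representation of $\partial_x u$ up to the moving boundary (Lemma~\ref{lem:aux:2}), which in turn relies on Lemma~\ref{lem:3.8} and on delicate uniform limits near the boundary. Your proposal says nothing about this boundary accumulation scenario, yet it is precisely where a naive Sturm argument fails. Also, a minor inaccuracy: a jump of $\Lambda$ is not a pure translation of the law — it truncates the density on $(0,\Lambda_t-\Lambda_{t-}]$ and then shifts, so it can destroy (though not create) sign changes; this does not hurt the count, but the claim ``no sign changes are created or destroyed'' is not literally correct.
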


\begin{proof}
\textit{First Step.} We refer to the property of a random variable described in the lemma as (P) and argue by contradiction. To this end, we suppose
\begin{equation}\label{what_is_tstar}
t_*:=\inf\mathcal{T}:=\inf\big\{t>0:\,X_{t-}\,\mathbf{1}_{\{\tau\ge t\}}\;\text{violates (P)}\big\}<\infty.
\end{equation}
{The first (easy) step is to check that, necessarily, $t_*\in\mathcal{T}$ (which, in particular, implies  $t_*>0$).
Indeed, if $t_* \not \in {\mathcal T}$, 
then
by Proposition \ref{prop:reg:barrier}, Lemma \ref{le:lambda.pos} and Proposition \ref{prop:analytic.1}, respectively,
there exists an $\epsilon>0$ such that $X_{s-}\,\mathbf{1}_{\{\tau\ge s\}}=X_s\,\mathbf{1}_{\{\tau>s\}}$, $\lim_{x\downarrow0}\partial_x\rho(s,x)=\lim_{x\downarrow0}\partial_x p(s,x)>0$ and $\partial_x\rho(s,\cdot)=\partial_x p(s,\cdot)$ are analytic on $(0,\infty)$ for all $s\in(t_*,t_*+\epsilon)$. 
Clearly, this implies that $[t_*,t_*+\epsilon) \subset {\mathcal T}$, which contradicts \eqref{what_is_tstar}.}
%
%
\medskip

To deduce a contradiction from $t_*\in\mathcal{T}$ we pick an arbitrary $R>\Lambda_{2t_*}-\Lambda_{t_*-}$ and aim to show that $\rho(t_*,\cdot)$ changes monotonicity finitely often on $[0,R]$. For this purpose, it suffices to obtain a uniform upper bound $M<\infty$
on the number of monotonicity changes of $p(s,\cdot)$ on $[0,R]$ for all $s\in[t_*/2,t_*)$. Indeed, then the definition of $t_*$, Lemma \ref{le:lambda.pos} and a diagonalization argument would yield a sequence $[t_*/2,t_*)\ni t_n\uparrow t_*$ and $0\le x^{(n)}_1\le x^{(n)}_2\le\cdots\le x^{(n)}_{M+1}=R$, $n\in\nn$ converging when $n\to\infty$ to $0\le x_1\le x_2\le\cdots\le x_{M+1}=R$, respectively, such that each $p(t_n,\cdot)$ is non-decreasing on $[0,x^{(n)}_1),\,[x^{(n)}_2,x^{(n)}_3),\,\ldots$ and non-increasing on $[x^{(n)}_1,x^{(n)}_2),\,[x^{(n)}_3,x^{(n)}_4),\,\ldots\,$. 
 Hence, the cumulative distribution function of $X_{t_*-}\,\mathbf{1}_{\{\tau\ge t_*\}}$, as the pointwise limit on $(0,\infty)$
  of the cumulative distribution functions of $X_{t_n}\,\mathbf{1}_{\{\tau>t_n\}}$, $n\in\nn$,  would be convex on $[0,x_1),\,[x_2,x_3),\,\ldots$ and concave on $[x_1,x_2),\,[x_3,x_4),\,\ldots\,$.
 (The convexity/concavity at $0$ would follow from the right-continuity of cumulative distribution functions.) Recalling from Proposition 
\ref{prop:analytic.1}
that $\rho(t_*,\cdot)$ is smooth on $(0,\infty)$, $\rho(t_*,\cdot)$ would change monotonicity finitely often on $(0,R]$. In particular, 
$\rho(t_{*},\cdot)$ would have a limit at $0$ and the resulting extension would 
change monotonicity finitely often on $[0,R]$.


\vskip 4pt

\textit{Second Step.} To find an upper bound $M<\infty$ as desired (which is the precise purpose of the remaining steps in the proof), we make the change of variables \eqref{u_to_p} and consider the zero set of $\partial_x u$ on 
{$D_*:=\{(t,x)\in[t_*/2,t_*]\times(0,\infty):\,x>\Lambda_t\}$.} 
On $D_*$, the $C^\infty$-function $\partial_x u$ is analytic in $x$ and solves $\partial_t(\partial_x u)=\frac{1}{2}\partial_{xx}(\partial_x u)$ (cf.~Proposition \ref{prop:analytic.1}). We conclude that, for every point $(t,x)\in D_*$, either $\partial_x u(t,x)\neq 0$ and the zero set of $\partial_x u$ is empty on a neighborhood $(t-\delta_1,t+\delta_1)\times(x-\delta_2,x+\delta_2)$, or $\partial_x u(t,x)=0$ and there exists a smallest $k\in\nn \setminus \{0\}$ with $\partial_x^k\partial_x u(t,x)\neq 0$. 

By Lemma \ref{lem:aux:1} below, 
 the latter case results in a neighborhood $(t-\delta_1,t+\delta_1)\times(x-\delta_2,x+\delta_2)$ on which the zero set of $\partial_x u$ is the union of $k$ curves, each containing $(t,x)$. $2 \lfloor k/2\rfloor$ curves are given by the graphs (in the (time, space)-coordinate system) of continuous functions on $[t-\delta_{1},t]$ that are $C^{\infty}$ on $[t-\delta_{1},t)$. In the (space, time)-coordinate system, the latter curves form $\lfloor k/2 \rfloor$ strictly convex smooth paraboloids with graphs in the negative half-space only. If $k$ is odd, there is another curve given by the graph of a $C^\infty$-function on $[t-\delta_{1},t+\delta_1]$ (in the (time, space)-coordinate system; in particular, it crosses the $x$-axis).

Covering the segment $[t_*/2,t_*]\times\{R+\Lambda_{t_*-}\}$ by such neighborhoods of its elements and extracting a finite subcover we find a non-trivial rectangle $[t_*/2,t_*]\times[R+\Lambda_{t_*-},R+\Lambda_{t_*-}+\delta]$ contained in the latter. We claim that, consequently, for $N\in\NN$ large enough, $\partial_x u$ has at most a finite number of zeros along the curve $\theta$ that linearly interpolates between the points
\begin{equation*}
\begin{split}
& \Big(\frac{t_*}{2},\,R+\Lambda_{t_*-}\Big),\;\;\Big(\frac{t_*}{2}+\frac{t_*}{4N},\,R+\Lambda_{t_*-}+\delta\Big),\;\;
\Big(\frac{t_*}{2}+2\frac{t_*}{4N},\,R+\Lambda_{t_*-}\Big),\\
& \Big(\frac{t_*}{2}+3\frac{t_*}{4N},\,R+\Lambda_{t_*-}+\delta\Big),\;\;
\Big(\frac{t_*}{2}+4\frac{t_*}{4N},\,R+\Lambda_{t_*-}\Big),\,\ldots,
\;(t_*,\,R+\Lambda_{t_*-}).
\end{split}
\end{equation*} 
Indeed, it suffices to prove that the number of intersection points between $\theta$ and the aforementioned 
zero curves is finite in a given neighborhood of the form 
$(t-\delta_1,t+\delta_1)\times(x-\delta_2,x+\delta_2)$ (provided we choose
$N$ large enough, which is always possible since we just need to handle a finite number of these neighborhoods). Obviously, by choosing  
$N$ large enough, we see that, if it exists, the extra curve (say $\zeta$)  that goes through the $x$-axis has at most one intersection point with any linear segment of $\theta$ (choose the slope of $\theta$ greater than the maximum of the time derivative $\dot{\zeta}$). 
As for the other zero curves, we know that they 
are strictly convex or concave on $(t-\delta_{1},t)$. Hence, whatever 
the value of $N$, any of those other curves has at most two intersection points with any linear segment of $\theta$. 
\vskip 4pt

\textit{Third Step.} Next, we fix an $s\in[t_*/2,t_*)$. The purpose of this step is to verify the following assertion. 

\smallskip

\noindent\textbf{Assertion.} For all $x\in(\Lambda_s,\theta_s)$ with $\partial_x u(s,x)=0$, there exist a $\underline{t}\in[t_*/2,s)$ and a continuous function $\zeta:\,[\underline{t},s]\to(0,\infty)$ (we shall say that $\zeta$ is ${C}^0$ on $[\underline{t},s]$)
 such that
\begin{enumerate}[(a)]
\item $\zeta_s=x$ and $\zeta_t\in(\Lambda_t,\theta_t)$, $\partial_x u(t,\zeta_t)=0$ when $t\in(\underline{t},s]$; 
\item for all $t\in(\underline{t},s)$, there exists a neighborhood
\begin{equation*}
(t-\delta_1,t+\delta_1)\times(\zeta_t-\delta_2,\zeta_t+\delta_2)\subset \{(t,x)\in(t_*/2,s]\times(0,\infty):\,\Lambda_t<x<\theta_t\}=:\Gamma_{t_*/2,s}
\end{equation*}
on which $\partial_x u(r,y)=0$, for $r \leq s$, implies $y\ge\zeta_r$;
\item one has
\begin{equation}
\big(\underline{t},\zeta_{\underline{t}}\big)\in 
\theta_{[t_*/2,s]} \cup (\{t_*/2\}\times[\Lambda_{t_*/2},R+\Lambda_{t_*-}])\cup\Lambda_{[t_*/2,s]}
=:\partial_{\text{par}}\Gamma_{t_*/2,s}. 
\end{equation}
\end{enumerate}  
\noindent Let us fix any $x$ as in the statement of the assertion and construct the desired curve $\zeta$.
First, the local description of the zero set of $\partial_x u$ implies that
\begin{equation}
\begin{split}
\mathcal{T}_0\! &:=\!\big\{r\in[t_*/2,t_*):\,\text{(a), (b) hold, with $r$ replacing $\underline{t}$,} \\
&\hspace{100pt} \text{for a continuous function}\;\zeta:\,(r,s]\to(0,\infty)\big\}
\end{split}
\end{equation}
is non-empty. Hence, $\underline{t}:=\inf\mathcal{T}_0\in[t_*/2,t_*)$ is well-defined. 
Next, we notice that, for any $r\in\mathcal{T}_0$, the corresponding continuous function $\zeta$ is uniquely determined. Indeed, thanks to Lemma \ref{lem:aux:1} (recall also the local description of the zero set of $\partial_x u$, provided earlier), there exists a minimal curve in the left neighborhood of $s$. This shows that $\zeta$ is uniquely determined in the left neighborhood of $s$. Then, it is uniquely determined on $(r,s]$, as otherwise the property (b) would be violated for the smallest $t$ such that the two candidate functions agree on $[t,s]$. Uniqueness also implies that, for any $r,r'\in \mathcal{T}_0$, with $r'< r$, the function $\zeta$ corresponding to $r'$ coincides with the one for $r$, on $(r,s]$.
Thus, all functions corresponding to the elements of $\mathcal{T}_0$ combine to a continuous function $\zeta:\,(\underline{t},s]\to(0,\infty)$ satisfying (a), (b). Moreover, by the intermediate value theorem, the limit points of $\zeta$ as $t\downarrow\underline{t}$ form an interval. However,  $\partial_xu(\underline{t},\cdot)\equiv0$ on its interior, and in view of the analyticity of $\partial_xu(\underline{t},\cdot)$ this interval must consist of a single point, which shows that $\lim_{t\downarrow\underline{t}} \zeta_t$ is well-defined. 
Now, we claim that $(\underline{t},\underline{x}:=\lim_{t\downarrow\underline{t}} \zeta_t) \not \in \Gamma_{t_*/2,s}$. Indeed, if 
$(\underline{t},\underline{x})\in \Gamma_{t_*/2,s}$, 
we know from Lemma \ref{lem:aux:1} that
$\zeta$ can be extended to the left of $(\underline{t},\underline{x})$ by choosing the smallest zero curve in the left neighborhood of $\underline{t}$, obtaining a contradiction to the definition of $\underline{t}$. Thus, $\zeta$ satisfies (c).


\vskip 4pt

\textit{Fourth Step.} Let us consider the curves $\zeta$ as in the above Assertion, for all possible $x\in(\Lambda_s,R+\Lambda_{t_*-})$ such that $\partial_x u(s,x)=0$, and with $s$ fixed. Whenever this causes no ambiguity, we will refer to them simply as ``zero curves". We claim that no two zero curves intersect in $\Gamma_{t_*/2,s}\cup \partial_{\text{par}}\Gamma_{t_*/2,s}$. Any potential intersection in $\Gamma_{t_*/2,s}\cup\theta_{[t_*/2,s]} \cup (\{t_*/2\}\times(\Lambda_{t_*/2},R+\Lambda_{t_*-}])$ is ruled out by applying the maximum principle for classical solutions of the heat equation on the region bounded by the two intersecting curves and recalling the analyticity of $\partial_x u$ in $x$ on $D_*$. 

\medskip

It remains to exclude the scenario that two zero curves, say $\zeta^{1}<\zeta^{2}$, approach the same point $(s_*,\Lambda_{s_*})\in \Lambda_{[t_*/2,s]}$. First, we notice that $\zeta^{i}$ restricted to any compact in $D_*$ is $C^{\infty}$ at all points, except for at most a finite number of them, as follows from the local description of the zero set of $\partial_x u$. This means that, for every $\zeta^{i}$, the only possible accumulation point of such singular points is $s^*$. Therefore, we can iterate over the singular points on each curve, smoothing out the functions at these points, and obtain new curves, $\zeta^{(1)}<\zeta^{(2)}$, which are $C^{\infty}$-functions on $(s^*,s]$, taking values in $(\Lambda_t,\theta_{t})$ for every $t\in(s^*,s]$, and converging to $\Lambda_{s_*}$ as $t\downarrow s_*$. In addition, for any $\bar{\varepsilon}>0$, we can choose $\zeta^{(1)}$ and $\zeta^{(2)}$, respectively, so that they are $\bar{\varepsilon}$-close to $\zeta^{1}$ and $\zeta^{2}$, and that $|\partial_x u(t,\zeta^{(i)}_t)|\leq \bar{\varepsilon}$ for all $t\in(s^*,s]$. In fact, by making the approximation finer as we get closer to $s_{*}$, we can also make sure that $\lim_{t\downarrow s_*} \partial_x u(t,\zeta^{(i)}_t)=0$. For most of the subsequent derivations in this step, we fix an arbitrary $\bar{\varepsilon}>0$ and the associated $\zeta^{(1)}$, $\zeta^{(2)}$.

\medskip

The main challenge in this step of the proof lies in the lack of a priori continuity and boundedness of $\partial_x u$ near the \textit{boundary} point $(s_*,\Lambda_{s_*})$, which does not allow us to directly apply the maximum principle or the Feynman-Kac formula to obtain a contradiction. We, therefore, need to justify the Feynman-Kac formula 
\begin{equation}\label{final_FK}
\partial_xu(t,x)=\EE[\partial_xu(t-\tau^{t,x},x+B_{\tau^{t,x}})]=:v(t,x),\quad x\ge\zeta^{(1)}_t,\ s_*\le t\le s,\ (t,x)\neq (s_{*},\zeta_{s_{*}}^{(1)}),
\end{equation}
where $\tau^{t,x}:=\inf\{r\ge0:x+B_r=\zeta^{(1)}_{t-r}\}\wedge(t-s_*)$. 
The proof is deferred to Lemma \ref{lem:aux:2} below. As a consequence of \eqref{final_FK}, the function $v$ is well-defined except maybe when $t=s_{*}$ and $x=\zeta_{s_{*}}^{(1)}=\Lambda_{s_{*}}$, in the sense that, for 
$x\ge\zeta^{(1)}_t$ and $s_*\le t\le s$ with $(t,x) \neq (s_{*},\Lambda_{s_{*}})$, 
$\EE[ \vert \partial_xu(t-\tau^{t,x},x+B_{\tau^{t,x}}) \vert] < \infty.$
\medskip

The Markov property of standard Brownian motion and the Feynman-Kac formula \eqref{final_FK} show that, for $t\in(s_*,s)$, $x\in(\zeta^{(1)}_t,\zeta^{(2)}_t)$, the process $\partial_x u(t-r\wedge\tau^{t,x},x+B_{r\wedge\tau^{t,x}})$, $r\in[0,t-s_*]$ is given by the conditional expectations of its terminal value with respect to the filtration of $B_r$, $r\in[0,t-s_*]$, consequently a martingale. The optional stopping theorem (see e.g. \cite[chapter II, corollary 3.6]{RY}) renders the process $\partial_x u(t-r\wedge\tau^{t,x}\wedge\underline{\tau}^{t,x},x+B_{r\wedge\tau^{t,x}\wedge\underline{\tau}^{t,x}})=u(t-r\wedge\underline{\tau}^{t,x},x+B_{r\wedge\underline{\tau}^{t,x}})$, $r\in[0,t-s_*]$, with $\underline{\tau}^{t,x}:=\inf\big\{r\ge0:x+B_r\in\{\zeta^{(1)}_{t-r},\zeta^{(2)}_{t-r}\}\big\}$, a martingale as well. Therefore, 
\begin{equation}
|\partial_x u(t,x)|=\big|\EE[\partial_x u(t-(t-s_*)\wedge\underline{\tau}^{t,x},x+B_{(t-s_*)\wedge\underline{\tau}^{t,x}})]\big|=\big|\EE[\partial_x u(t-\underline{\tau}^{t,x},x+B_{\underline{\tau}^{t,x}})]\big|\le\bar{\varepsilon},
\end{equation}
for all $x\in(\zeta^{(1)}_t,\zeta^{(2)}_t)$, $t\in(s_*,s)$. Hence, for all $t\in(s_*,s)$ and $x\in(\zeta^1_t,\zeta^2_t)$, we also have $|\partial_x u(t,x)|\leq \bar{\varepsilon}$, provided $\bar{\varepsilon}>0$ is small enough. In the limit $\bar{\varepsilon}\downarrow0$, we obtain that $\partial_x u(t,\cdot)$ vanishes on a non-trivial interval, contradicting the analyticity of $u(t,\cdot)$ and ruling out $\lim_{t\downarrow s_*} \zeta^{1}_t=\Lambda_{s_*}=\lim_{t\downarrow s_*} \zeta^{2}_t$. 
\vskip 4pt

\textit{Fifth Step.} To conclude, we denote, for a given $s\in[t_*/2,t_*)$, 
\begin{equation}
\overline{x}:=\sup\big\{x\in(\Lambda_s,\theta_s):\,\partial_x u(s,x)=0\,\text{ and }\lim_{t\downarrow\underline{t}} \zeta_t=\Lambda_{\underline{t}}\big\}\vee\Lambda_s,
\end{equation}
with $\underline{t}\in[t_*/2,s)$ and $\zeta:(\underline{t},s]\to(0,\infty)$ of the Assertion in the third step (in particular, $\zeta_{s}=x$), and claim that $\partial_x u(s,\cdot)\ge 0$ on $(\Lambda_s,\overline{x}]$. Indeed, for any fixed zero $x\in(\Lambda_s,\overline{x}]$ of $\partial_x u(s,\cdot)$ and with the corresponding $T_{reg}(\underline{t})$ of Proposition \ref{prop:reg:barrier}, each $(t,\zeta_t)$, $t\in[(\underline{t}+T_{reg}(\underline{t}))/2,s)=:[t_{reg},s)$ admits a neighborhood $(t-\delta_1,t+\delta_1)\times(\zeta_t-\delta_2,\zeta_t+\delta_2)$ satisfying the property stated in part (b) of the Assertion in the third step. By adding a rectangle $(s-\delta_1,s]\times(\zeta_s-\delta_2,\zeta_s+\delta_2)$ on which $\partial_x u(r,y)=0$ implies $y\ge\zeta_r$ and extracting a finite subcover of $\zeta_{[t_{reg},s]}$ we construct left neighborhoods of $x$ and $\zeta_{t_{reg}}$ on which the signs of $\partial_x u(s,\cdot)$ and $\partial_x u(t_{reg},\cdot)$, respectively, coincide. Moreover, $\partial_x u(t_{reg},y)> 0$, $y\in(\Lambda_{t_{reg}},\zeta_{t_{reg}})$, since otherwise Lemma \ref{le:lambda.pos}, the intermediate value theorem, and the Assertion in the third step, would imply the existence of a $\underline{\widetilde{t}}$ and a {continuous} function {$\widetilde{\zeta}:\,[\underline{\widetilde{t}},t_{reg}]\to(0,\infty)$} such that 
\begin{enumerate}[(a)]
\item $\widetilde{\zeta}_t\in(\Lambda_t,\zeta_t)$ and $\partial_x u(t,\widetilde{\zeta}_t)=0$, for all $t\in(\widetilde{\underline{t}},t_{reg}]$; 
\item for all $t\in(\widetilde{\underline{t}},t_{reg})$, there exists a neighborhood
\begin{equation*}
(t-\delta_1,t+\delta_1)\times(\widetilde{\zeta}_t-\delta_2,\widetilde{\zeta}_t+\delta_2)\subset \{(t,x)\in(\widetilde{\underline{t}},t_{reg}]\times(0,\infty):\,\Lambda_t<x<\zeta_t\}
\end{equation*}
on which $\partial_x u(r,y)=0$, for $r \leq t_{reg}$, implies $y\ge\widetilde{\zeta}_r$;
\item one has {$\widetilde{\zeta}_{\widetilde{\underline{t}}}\in\{\zeta_{\widetilde{\underline{t}}},\Lambda_{\widetilde{\underline{t}}}\}$}.
\end{enumerate} 
This would contradict the findings of the third step, if  {$\widetilde{\zeta}_{\widetilde{\underline{t}}}=\zeta_{\widetilde{\underline{t}}}$}, and would contradict Lemma \ref{le:lambda.pos}, if $\widetilde{\underline{t}}>\underline{t}$ and  {$\widetilde{\zeta}_{\widetilde{\underline{t}}}=\Lambda_{\widetilde{\underline{t}}}$}. Hence, $\partial_x u(s,\cdot)\ge 0$ on $(\Lambda_s,\overline{x}]$, as claimed. In particular, $u(s,\cdot)$ does not change monotonicity on $(\Lambda_s,\overline{x}]$.

\medskip

Finally, for any $x\in(\overline{x},\theta_s)$ with $\partial_x u(s,x)=0$, we have $\zeta_{\underline{t}}>\Lambda_{\underline{t}}$
(with $\zeta$ now ending at $\zeta_{s}=x$)
 and the set $\zeta_{(\underline{t},s]}\cup\{(\underline{t},\lim_{t\downarrow\underline{t}} \zeta_t)\}=:\zeta_{[\underline{t},s]}$ can be covered by open rectangles satisfying the property (b) of the Assertion in the third step, together with the two rectangles $[\underline{t},\underline{t}+\delta_1)\times( \zeta_{\underline{t}}-\delta_2, \zeta_{\underline{t}}+\delta_2)$ and $(s-\delta_1,s]\times(\zeta_s-\delta_2,\zeta_s+\delta_2)$ on which $\partial_x u(r,y)=0$ implies $y\ge\zeta_r$. Extracting a finite subcover we deduce the existence of an $\epsilon>0$ such that $\partial_x u$ has the same sign on $\{s\}\times(x-\epsilon,x)$ and on $\theta_{(\underline{t}-\epsilon,\underline{t})}$, if $\underline{t}>t_*/2$, or on $\{s\}\times(x-\epsilon,x)$ and on $\{t_*/2\}\times(\zeta_{\underline{t}}-\epsilon, \zeta_{\underline{t}})$, if $\underline{t}=t_*/2$. Applying this observation first for $x\in(\overline{x},\overline{x}^*]$, where
\begin{equation}
\begin{split}
& x^*=\max\big\{x\in(\Lambda_{t_*/2},\theta_{t_*/2}]:\,\partial_x u(t_*/2,\cdot)\ge0\text{ or }\partial_x u(t_*/2,\cdot)\le 0 \text{ on }(\Lambda_{t_*/2},x]\big\}, \\
& \overline{x}^*=\sup\big\{x\in(\Lambda_s,\theta_s):\,\partial_x u(s,x)=0,\text{ and }\underline{t}=t^*/2,\,\lim_{t\downarrow\underline{t}} \zeta_t\in(\Lambda_{t_*/2},x^*]\text{ or }\lim_{t\downarrow\underline{t}} \zeta_t=\Lambda_{\underline{t}}\big\}\vee\Lambda_s,
\end{split}
\end{equation}
and then for $x\in(\overline{x}^*,\theta_s)$, we see
that $\partial_x u(s,\cdot)$ has at most two sign changes on $(\Lambda_s,\overline{x}^*]$, and its number of sign changes on $(\overline{x}^*,\theta_s]$ cannot exceed the number of zeros of $\partial_x u$ on $\theta_{[t_*/2,t_*)} \cup (\{t_*/2\}\times(x^*,R+\Lambda_{t_*-}])$.
Since the latter is finite, due to the definition of $x^*$ and the analyticity of $\partial_x u(t_*/2,\cdot)$, we complete the proof.
\end{proof}

Our proof of Theorem \ref{thm1} combines Lemma \ref{zero_number} and the results of Sections \ref{se:2} and 
\ref{se:3}.  

\begin{proof}[Proof of Theorem \ref{thm1}]
Lemma \ref{zero_number} gives the first assertion of the theorem and ensures the existence of the limit $\lim_{x\downarrow0} \rho(t,x)$ for all $t>0$. In case \textit{(i)}, it suffices to use the local uniqueness result of \cite[Theorem 1.8]{HLS} in conjunction with a straightforward adaptation of the local existence result in \cite[Theorem 4.1]{DIRT1} to the setting of \eqref{Stefan_prob}. In case \textit{(ii)}, Propositions \ref{prop:sec2.p0.upperbd} and \ref{prop:Holder.1} 
guarantee the $1/2$-H\"older continuity of 
$\Lambda$ on $[t,t+\epsilon)$, for some $\epsilon>0$. In case \textit{(iii)}, the identity \eqref{jump_size} stems directly from the definition of a physical solution, see \eqref{phys_cond}. In all cases, Propositions \ref{prop:reg:barrier} and \ref{prop:analytic.1} yield an $\epsilon>0$ such that $\Lambda\in C^1((t,t+\epsilon))$ and the densities $p(s,\cdot)$, $s\in(t,t+\epsilon)$ are real analytic and form a classical solution of the Dirichlet problem in \eqref{Dir_problem}. The relation $\dot{\Lambda}_s=\frac{\alpha}{2}\partial_x p(s,0)$, $s\in(t,t+\epsilon)$ is now an immediate consequence of $\Lambda_s=\alpha\mathbb{P}(\min_{r\in[0,s]} X_r\le 0)$, $s\in(t,t+\epsilon)$ and \cite[Lemma 2.1(iv)]{DIRT3}. 
\end{proof}

\subsection{Proofs of auxiliary results}

\begin{lemma}
\label{lem:aux:1}
Let $t>0$ and $x> \Lambda_{t}$ be such that 
$\partial_{x} u(t,x)= 0$ and call 
$k\in\nn {\setminus \{0\}}$ the smallest integer such that $\partial_x^k\partial_x u(t,x)\neq 0$. 
Then, 
there is a neighborhood $(t-\delta_1,t+\delta_1)\times(x-\delta_2,x+\delta_2)$ on which the zero set of $\partial_x u$ is the union of $k$ curves. {$2 \lfloor k/2\rfloor$ curves are given by the graphs (in the (time, space)-coordinate system) of continuous functions 
 on $[t-\delta_{1},t]$ that are $C^{\infty}$ on $[t-\delta_{1},t)$. In the (space, time)-coordinate system, the latter curves form 
 $\lfloor k/2 \rfloor$ strictly convex smooth paraboloids with graphs in the negative half-space only. If $k$ is odd, there is another curve given by
 the graph of a $C^\infty$-function on $[t-\delta_{1},t+\delta_1]$ (in the (time, space)-coordinate system; in particular, it crosses the $x$-axis).}
\end{lemma}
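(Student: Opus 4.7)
The plan is to exploit the analyticity of $w := \partial_x u$ in the space variable, provided by Proposition \ref{prop:analytic.1}, together with the fact that $w$ satisfies the heat equation $\partial_t w = \tfrac12 \partial_{xx} w$. After translating so that $(t_0,x_0)=(t,x)=(0,0)$, the heat equation gives $\partial_t^j \partial_x^n w(0,0) = 2^{-j}\partial_x^{n+2j} w(0,0)$; under the parabolic grading assigning weight $2$ to $s = t - t_0$ and weight $1$ to $y = x - x_0$, the Taylor expansion of $w$ at the origin thus begins with the homogeneous polynomial of degree $k$,
\begin{equation*}
H_k(s,y) := \partial_x^k w(0,0) \sum_{j=0}^{\lfloor k/2 \rfloor} \frac{s^j\, y^{k-2j}}{2^j\, j!\, (k-2j)!}.
\end{equation*}
Up to a nonzero multiplicative constant $H_k$ is a classical heat polynomial; $z \mapsto H_k(\pm 1,z)$ is (a rescaling of) the Hermite polynomial of degree $k$, and so $H_k(-1,\cdot)$ has exactly $k$ simple real roots symmetric about $0$ (including a root at $0$ iff $k$ is odd), while $H_k(+1,\cdot)$ has no real roots when $k$ is even and only the simple root at $0$ when $k$ is odd.

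Next I would perform the parabolic rescaling $y = \sigma z$, $s = \pm\sigma^2$ with $\sigma \ge 0$, and set $W^\pm(\sigma,z) := \sigma^{-k} w(\pm \sigma^2, \sigma z)$ for $\sigma > 0$. The parabolic expansion above, combined with the analyticity of $w$ in $y$, implies that $W^\pm$ extends continuously to $\sigma = 0$ with $W^\pm(0,z) = H_k(\pm 1,z)/k!$ and that $W^\pm$ is $C^\infty$ in $z$ with $\partial_z W^\pm$ continuous jointly in $(\sigma,z)$. At every simple real root $r$ of $H_k(\pm 1,\cdot)$ one has $\partial_z W^\pm(0,r) \neq 0$, and the implicit function theorem supplies a unique smooth branch $z = \zeta(\sigma)$ on $[0,\sigma_0)$ solving $W^\pm(\sigma,\zeta(\sigma)) = 0$, and hence a zero curve $y = \sigma\,\zeta(\sigma)$ of $w$. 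Globalising in $s$ away from $0$, where $\partial_y w \neq 0$ along each curve, the same curves extend by the (standard) implicit function theorem to smooth functions of $s$ on $(-\delta_1,0)$ (respectively $(0,\delta_1)$). This yields, for $s < 0$ small, exactly $k$ zero curves $y_1(s) < \cdots < y_k(s)$, each continuous up to $s=0$ with $y_i(s) \to 0$ and $C^\infty$ on $(-\delta_1,0)$; for $s > 0$, it yields no zero curves when $k$ is even and exactly one (smooth through the origin, accounting for the two-sided curve in the lemma) when $k$ is odd.

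The hard part will be to organise the $k$ past-side curves into $\lfloor k/2 \rfloor$ smooth paraboloids in the $(x,t)$-plane. The roots of $H_k(-1,\cdot)$ come in opposite-sign pairs $\{\pm r_j\}_{j=1}^{\lfloor k/2\rfloor}$ (with a possible root at $0$ when $k$ is odd, already accounted for by the two-sided curve), so the associated zero curves satisfy $y_\pm(s) = \pm r_j\sqrt{-s} + O(s)$. Inverting each branch expresses $s$ as a function of $y$ on its own side of $0$, with expansion $s = -y^2/r_j^2 + O(|y|^3)$. I would then carry out a Puiseux-type factorisation of $w(s,\cdot)$, viewed for each $s$ as a germ of analytic function with $k$ complex zeros near $0$, in the spirit of \cite[proof of theorem 5.1]{AF}: pairs of real roots that merge at $(0,0)$ as $s \uparrow 0$ correspond to irreducible algebraic branches of the form $s = -y^2/r_j^2 + $ (higher-order smooth correction in $y$), so that the two half-branches glue into the graph of a single smooth function $t = \psi_j(x)$ with $\psi_j(x_0) = t_0$, $\psi_j'(x_0) = 0$ and $\psi_j''(x_0) < 0$. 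In the $(x,t)$-plane each such graph is a strictly convex smooth paraboloid lying in $\{t \le t_0\}$ and meeting the line $\{t = t_0\}$ only at $(x_0,t_0)$. The main technical obstacle is precisely this smooth gluing across $y=0$: matching all higher Puiseux coefficients of the two branches relies crucially on the analyticity of $w$ in $y$ and on the heat-equation-enforced parabolic structure of its Taylor expansion; without either, one could only conclude continuous (not smooth) gluing.
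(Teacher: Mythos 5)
Your parabolic-rescaling plan is the right starting point and captures the correct root structure of the heat polynomial $H_k$, but it contains a genuine gap at precisely the place you flag as ``the hard part'': the gluing of the two half-branches of each paraboloid across $y=0$. Your parameterization by $\sigma$ (equivalently by $s$) produces, for each pair $\{\pm r_j\}$ of simple roots of $H_k(-1,\cdot)$, two separate branches $\sigma \mapsto \sigma\,\zeta_\pm(\sigma)$ that land on $y>0$ and $y<0$ respectively; to obtain a single $C^\infty$ function $s=\psi_j(y)$ you must then verify that \emph{all} derivatives of the two inverse branches match at $y=0$, which your sketch asserts ``relies crucially on the analyticity of $w$ in $y$'' but never actually carries out. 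The paper sidesteps this entirely by choosing the opposite parameterization: it freezes $y$ rather than $s$, expands $\partial_x u(t+y^2\sigma, x+y)$ as $y\to 0$, and applies the implicit function theorem to the degree-$\lfloor k/2\rfloor$ polynomial $\sigma\mapsto \sum_{m+2n=k}[1/(m!n!)]\,\partial_x^m\partial_t^n\partial_x u(t,x)\,\sigma^n$ at each of its $\lfloor k/2\rfloor$ simple negative roots, producing $\sigma_j(y)$ smooth \emph{through} $y=0$. The paraboloid $s=y^2\sigma_j(y)$ is then automatically a single smooth graph over $y$ --- no gluing is needed.

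There is a second gap in the odd-$k$ case: your one two-sided curve is constructed separately for $s<0$ and $s>0$ via the rescaled IFT, and you assert in a parenthesis that it is ``smooth through the origin'', but the rescaled argument by itself only gives $C^\infty$ regularity away from $s=0$ and continuity at $s=0$. Making this rigorous is non-trivial. The paper handles it by introducing $r$, the first positive integer with $\partial_x^{k+2r}u(t,x)\neq 0$, and showing (via an expansion of $\partial_x u(t+s,x+\eta s^r)$) that the middle curve has the form $y=\eta(s)s^r$ with $\eta$ smooth; and then, because $u$ is \emph{not} a priori known to be time-analytic, the case $r=\infty$ can occur and requires a separate argument via the Malgrange preparation theorem (including a non-obvious holomorphic extension of the Malgrange factorization in $x$ to deduce flatness of the curve at $s=0$). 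Your proposal does not engage with the $r=\infty$ possibility at all, and implicitly leans on a stronger analyticity than Proposition \ref{prop:analytic.1} actually provides (it gives analyticity in $x$ and joint continuity of derivatives, not joint time-space analyticity).
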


\begin{proof}
{The simple fact that $\partial_{x}^k\partial_x u(t+s,x+y) \neq 0$
for $(s,y)$ in the neighborhood of $0$ implies that, 
for a given $s$ in the neighborhood of $0$, 
the equation $\partial_{x} u(t+s,x+y)=0$, with $y$ in a neighborhood of $0$ (the neighborhood being independent of $s$), has at most $k$ roots. 
In fact, by
Proposition  
\ref{prop:analytic.1}, $\partial_{x} u(t+s,x+z)$ can be analytically extended in $z$ to a complex neighborhood of $0$ (again, the neighborhood is independent of $s$), and, then, 
for $(s,z)$ in a neighborhood of $0$, it holds $\partial_{z}^k \partial_{x} u(t+s,x+z) \neq 0$. Therefore, for a given $s$ in the neighborhood of $0$, 
the equation $\partial_{x} u(t+s,x+z)=0$, with $z$ in a complex neighborhood of $0$ (the neighborhood being independent of $s$), has at most $k$ (complex) roots (counting multiplicity). 
\vskip 4pt

\textit{First Step.} Next, we describe these roots on a case-by-case basis. To do so, we follow \cite{AF}, with the only difference being that the procedure therein is applied to time-space analytic functions. Arguing as in \cite[procedure following the display after (5.2)]{AF}, we first freeze $y$ (instead of $s$)
in a real neighborhood of $0$ 
and find $\lfloor k/2 \rfloor$ negative roots $s$ of $\partial_{x} u(t+s,x+y)$ of the form $s=y^2 \sigma_{j}(y)$, for $j \in \{1,\ldots,\lfloor k/2\rfloor\}$, where 
$\sigma_{j}(y)$, $j \in \{1,\ldots,\lfloor k/2 \rfloor\}$ are smooth non-positive-valued functions of $y$ in a neighborhood of $0$. The proof of this fact relies on the expansion of $\partial_x u(t+y^2\sigma,x+y)$ as $y\to 0$ in the form $y^k\sum_{m+2n=k}
[1/(m!n!)]
 \partial^m_x\partial^n_t \partial_x u(t,x)\sigma^n + y^{k+1} H(\sigma,y)$, for a smooth function $H$, which allows us to apply \cite[procedure following the display after (5.2)]{AF}. For $y$ in the neighborhood of $0$, the curves $\sigma_{j}(y)$, $j \in \{1,\ldots,m\}$ are known to be away from $0$, implying via a simple differentiation argument that the curves $y^2 \sigma_{j}(y)$, $j \in \{1,\ldots,m\}$ are (strictly) decreasing for $y>0$ and (strictly) increasing for $y<0$; moreover, a second derivative computation reveals that
 they are strictly convex. In other words, going backward in time from $(t,x)$, we (locally) find $2 \lfloor k/2 \rfloor$ curves of zeros of $\partial_{x} u$ that are continuous up to and including $t$ but that are smooth up to and excluding $t$ (because the curves behave like $\sqrt{-s}$ as $s \rightarrow 0$). If $k$ is even, we have (locally) exhausted all the roots of $\partial_{x} u(t+s,x+y)$ for $s<0$. 
 
When $s=0$, the $k$ roots are obviously at $y=0$ (counting multiplicity). 

When $s>0$, we may repeat the same procedure as before by expanding 
$\partial_x u(t-y^2\sigma,x+ i y)$ as $y\to 0$, with $i^2=-1$ (or, equivalently,
by expanding  
$\partial_x u(t+z^2\sigma,x+ z)$ as $z\to 0$, with $z$ complex, which is licit since the time derivatives are also analytically extended to a complex neighborhood of $(t,x)$ thanks to Proposition  
\ref{prop:analytic.1}). We, then, find $k$ purely imaginary roots 
to $\partial_x u(t+s,x+z)$, for $s>0$ close to $0$. When $k$ is even, this exhausts all the possible complex roots of  
$\partial_x u(t+s,x+z)$ and, in particular, there are no real roots.  
\vskip 4pt

\textit{Second Step.} In order to complete the picture, it remains to address the case of $k$ being odd. By the above procedure, we already have $2 \lfloor k/2\rfloor$ roots $(s,y)$ with $s<0$ and 
$2 \lfloor k/2\rfloor$ (purely) imaginary roots $(s,i y)$ with $s>0$. In both cases, we are missing one root. 
Writing $k=2 \ell+1$, we then follow 
\cite[procedure following the display after (5.3)]{AF}, again, paying attention to the fact that the solution therein is time-space analytic, while it is not the case here. In this regard, the main difficulty is to handle the case $r=\infty$, where $r$
is the first positive integer such that $\partial_{x}^{k+2r} u(t,x) \neq 0$ (or, equivalently, $\partial_{t}^r \partial_{x}^k u(t,x) \neq 0$). In the time-space analytic setting, $r=\infty$ forces $\partial_{x}^k u(s,x)=0$ for $s$ in the neighborhood of $0$, yielding $y=0$ as the missing root. This logic does not apply in our setting, and we must argue differently. 

For completeness, we start by considering the case $r<\infty$. Adapting 
\cite[procedure following the display after (5.3)]{AF}, we now expand 
$\partial_x u(t+s,x+\eta s^r)$ as $s\to 0$ 
(for a given $\eta$):
\begin{equation*}
\begin{split}
\partial_x u(t+s,x+\eta s^r) &= \sum_{m+2n \geq k, n+rm \leq \ell+r} \frac1{2^n m! n!} \partial_{x}^{m+2n+1}u(t,x)  s^{n+r m} \eta^m 
+ s^{\ell+r+1} L(s,\eta) 
\\
&= s^{\ell+r} \Bigl( 
\frac1{2^{\ell+r} (\ell+r)!} \partial_{x}^{k+2r}u(t,x) 
+
\frac1{2^\ell  \ell!} \partial_{x}^{k+1}u(t,x) \eta
\Bigr)
+ s^{\ell+r+1} L(s,\eta),
\end{split}
\end{equation*}
where $L(s,\eta)$ is a smooth function that may vary from line to line, and where, in the second equality, we used 
the fact that the conditions $m+2n \geq k$ and $n+rm \leq \ell+r$ imply $m \in \{0,1\}$.
By the implicit function theorem, we find, for any $s$ in a neighborhood of $0$, a root of the form 
$y=\eta(s) s^r$, where $\eta$ is a smooth function of $s$. This yields a smooth curve of zeros that crosses the $y$-axis. 
This root behaves polynomially in $s$ and, hence, must differ from the roots we have already found. It is the last missing root.
\vskip 4pt

\textit{Third Step.} It remains to consider the case $r=\infty$. Although this is not really useful in our analysis, we notice that this case may only occur when $t < t_{*}$ and 
$\limsup_{y \downarrow 0} p(t,y)>0$. Indeed, if $\partial_{x}^{k+2 l} u(t,x) =0$ for any $l \in {\mathbb N}$, we obtain 
$\partial_{x} u(t,x) = \cdots =\partial_{x}^{2 \ell+1} u(t,x) =
\partial_{x}^{2 \ell+3} u(t,x) = \cdots= \partial_{x}^{2 \ell + 2 l + 1}=0$, for any $l \in {\mathbb N}$, which proves that $u(t,x+y)$ is even in $y$ in a neighborhood of $0$. Put differently, $u(t,\cdot)$ is locally symmetric with respect to $x$. Recalling that 
$u(t,\cdot)$ is analytic on the entire $(\Lambda_t,\infty)$ (see Proposition \ref{prop:analytic.1}), we conclude that 
$u(t,\cdot)$ and $u(t,-\cdot)$ are two (real-)analytic functions (with different domains) that coincide on a non-empty interval. Hence, 
$u(t,\cdot)$ extends to an analytic function on the entire ${\mathbb R}$ and is symmetric with respect to $x$.
Then, following the end of the proof of 
Proposition
\ref{prop:analytic.1}, we deduce that the density
$\rho(t,\cdot)$ 
changes monotonicity finitely often on compacts of $[0,\infty)$, from which we conclude that $t < t_{*}$. Now, if $\limsup_{y \downarrow 0} p(s,y) = 0$, we get $u(s,\Lambda_{s})=0$, so that, by the symmetry w.r.t. $x$, $u(t,\cdot)$ has another zero greater than $x$. By the maximum principle for the heat equation and by analyticity, this forces $u(t,\cdot)$ to be identically zero, which is of course absurd.
 
Returning to our analysis, we claim that the conclusion of the second step also holds true in the case $r=\infty$:
i.e., there exists a smooth curve of zeros of $\partial_x u$ that crosses the $y$-axis; in addition, all of the derivatives of this curve (viewed as a function of time $s$) vanish at $s=0$. 

Indeed, invoking the Malgrange preparation theorem (see \cite[chapter 2, Theorem 7.1]{ChowHale}) we know that, 
for $(s,y)$ in a neighborhood of $(0,0)$, $\partial_{x} u(t+s,x+y)$ can be written in the form 
$q(s,y) \Gamma_{s}(y)$, where $q(s,y)$ is a smooth non-zero function 
and, for $s$ fixed, $\Gamma_{s}(y):= y^k - \sum_{l=0}^{k-1} a_{l}(s) y^l$ is a polynomial function of order $k$ (whose coefficients $a_l(s)$ are smooth real-valued functions of $s$). 
Clearly, when $s=0$, $\Gamma_{s}(y)$ degenerates into $y^k$. In particular, by continuous dependence of the roots upon the coefficients, all the real roots of $\Gamma_{s}$ are in the neighborhood of $0$ when $|s|$ is small enough. 
From the first part of our analysis, we already know 
that $\Gamma_{s}$ has $k-1=2 \lfloor k/2\rfloor$ simple real roots when $s<0$ and, hence, the missing root must be simple and real. We also know that it cannot have more than one real root when $s>0$ (as otherwise $\partial_{x} u(t+s,x+y)$ would have at least two additional zeros) and, hence, the remaining root must be simple and real as well. Denoting by $\zeta(s)$ this remaining root, we deduce from the implicit function theorem that $\zeta$ is $C^{\infty}$ on $(t-\delta,t)$ and on 
$(t,t+\delta)$, for some $\delta >0$. 
\vskip 4pt

\textit{Fourth Step.}
To obtain smoothness at $s=0$, we argue as follows:
For $s<0$, the Malgrange preparation theorem says that the leading term in the expansion of 
$\partial_{x} u(t+s,x)$ as $s \rightarrow 0$ must behave like the product of all $k$ roots, namely 
$\partial_{x} u(t+s,x)=q(s,0) c(s) s^{\lfloor k/2\rfloor} \zeta(s)$, where $c(s)$ is a non-zero smooth function of $s$. Since $r=\infty$, we easily deduce 
from the Taylor expansion of 
$\partial_{x} u(t+s,x)$
that 
$\vert \zeta(s) \vert$ is less than $C \vert s \vert^{\varrho}$, for all $\varrho \in {\mathbb N}$. 
By differentiating the relationship 
$\partial_{x} u(t+s,x)=q(s,x) s^{\lfloor k/2\rfloor} \zeta(s)$
with respect to $s$ and by arguing in a similar manner, we deduce that 
$\vert \zeta'(s) \vert$ is less than $C \vert s \vert^{\varrho}$, for all $\varrho \in {\mathbb N}$. Iterating this argument, we deduce that
$\zeta(s)$ is infinitely differentiable on $(t-\delta,t]$, with vanishing left-derivatives of all orders at $s=0$. 
 
A similar argument can be applied to $s>0$, by using the complex roots of $\partial_{x} u(t+s,x+y)$, but 
the derivation of the identity 
$\partial_{x} u(t+s,x)=q(s,0) c(s) s^{\lfloor k/2\rfloor} \zeta(s)$
 is more involved. It requires an extension of Malgrange's decomposition to a complex neighborhood of $x$. For this purpose, it is worth recalling that the proof of Malgrange's theorem, as exposed in the monograph 
\cite{ChowHale} is based upon the
polynomial division theorem  
\cite[chapter 2, theorem 7.3]{ChowHale}
with $G(y,s)=y^k$ or $G(y,s)=\partial_{x} u(t+s,x+y)$. The key fact in our case is that both
choices 
have a natural extension to a neighborhood of $(0,0)\in{\mathbb C} \times {\mathbb R}$, the extensions 
$z^k$ and $\partial_{x} u(t+s,x+z)$ being holomorphic in the complex variable $z$ (and having derivatives in $s$ that are also holomorphic in $z$). This makes the choice of $\tilde G$ in the proof trivial, see \cite[page 48]{ChowHale}. In particular, $\tilde G$ may be assumed to be independent of $\lambda$ and to be holomorphic 
in the first argument. Consequently, $q$ in the statement 
of \cite[chapter 2, Theorem 7.3]{ChowHale} may be assumed holomorphic in the first variable when considered on a neighborhood of $0$. As a result, 
the decomposition $\partial_{x} u(t+s,x+y) = q(s,y) \Gamma_{s}(y)$ extends
to $\partial_{x} u(t+s,x+z) = q(s,z) \Gamma_{s}(z)$, for $(s,z)$ in a neighborhood of $(0,0)\in{\mathbb R} \times {\mathbb C}$,
where 
$q(s,z)$ 
is holomorphic in the variable $z$. Obviously, $q$ may be assumed to be non-zero on the latter neighborhood,  
from which we deduce that $\Gamma_{s}(z)$ and 
$\partial_{x} u(t+s,x+z)$ have the same zeros. Hence, the roots of $\Gamma_{s}$, for $s>0$, are given precisely by $\zeta(s)$, and the $k-1=2\lfloor k/2\rfloor$ purely imaginary zeros of $\partial_{x} u(t+s,x+z)$ have already been found. Taking $z=0$, we still have $\partial_{x} u(t+s,x)=q(s,0) c(s) s^{\lfloor k/2\rfloor} \zeta(s)$ and we conclude as in the case $s<0$.} 
 \end{proof}

\begin{lemma}
\label{lem:aux:2}
For arbitrary (fixed) $\bar{\varepsilon}>0$ and $s>s_{*}>0$, let $\zeta:\,[s_{*},s]\to{\mathbb R}$ be a continuous function, such that $\zeta_{s_{*}}=\Lambda_{s_{*}}$ and, on $(s_{*},s]$, $\zeta$ is $C^\infty$, is strictly greater than $\Lambda$, and 
satisfies 
$\vert \partial_{x} u(\cdot,\zeta_{\cdot}) \vert \leq \bar{\varepsilon}$. Then, for $t \in [s_{*},s]$ and $x \geq \zeta_{t}$, 
with $(t,x) \neq (s_{*},\Lambda_{s_{*}})$, 
\begin{equation}
\label{final_FK_proof}
\partial_xu(t,x)=\EE[\partial_xu(t-\tau^{t,x},x+B_{\tau^{t,x}})],
\end{equation}
where $\tau^{t,x}:=\inf\{r\ge0:x+B_r=\zeta_{t-r}\}\wedge(t-s_*)$.  
\end{lemma}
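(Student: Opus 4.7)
The plan is to establish the Feynman--Kac representation via Itô's formula on a doubly-localized region avoiding the corner $(s_*,\Lambda_{s_*})$ and then remove the two localizations in succession, the delicate point being the behavior of $\partial_x u$ at the corner.

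When $t=s_*$ or $x=\zeta_t$, $\tau^{t,x}=0$ and the identity is tautological; I henceforth assume $t\in(s_*,s]$ and $x>\zeta_t$. For auxiliary parameters $\epsilon\in(0,t-s_*)$ and $R>x$, set $\tau^{\epsilon,R}:=\tau^{t,x}\wedge(t-s_*-\epsilon)\wedge\inf\{r\ge 0:\,x+B_r\ge R\}$. By Proposition \ref{prop:analytic.1}, $\partial_x u$ is $C^\infty$ on $\mathring D$ and satisfies the heat equation pointwise there. Since $\zeta_r>\Lambda_r$ on $[s_*+\epsilon,t]$ and $\zeta$ is continuous, the set $K_{\epsilon,R}:=\{(r,y):\,s_*+\epsilon\le r\le t,\,\zeta_r\le y\le R\}$ is a compact subset of $\mathring D$ on which $\partial_x u$ is continuous and bounded. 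Itô's formula applied to $r\mapsto\partial_x u(t-r,x+B_r)$ on $[0,\tau^{\epsilon,R}]$ produces a bounded true martingale by the cancellation of the time and Laplacian terms, whence $\partial_x u(t,x)=\mathbb{E}[\partial_x u(t-\tau^{\epsilon,R},x+B_{\tau^{\epsilon,R}})]$. Letting $R\to\infty$ at fixed $\epsilon$ kills the contribution from exit through $\{y=R\}$: the Brownian tail probability decays super-polynomially in $R^{-1}$, while the Gaussian decay in $y$ of $\partial_x u(r,y)$, uniform in $r\in[s_*+\epsilon,t]$ and readable from the explicit representation in the proof of Lemma \ref{lem:3.8}, bounds the integrand. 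This yields $\partial_x u(t,x)=\mathbb{E}[\partial_x u(t-\tau^{\epsilon},x+B_{\tau^{\epsilon}})]$ with $\tau^{\epsilon}:=\tau^{t,x}\wedge(t-s_*-\epsilon)$.

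The main obstacle is the limit $\epsilon\downarrow 0$. I split according to $A_\epsilon:=\{\tau^{t,x}<t-s_*-\epsilon\}$ and $B_\epsilon:=A_\epsilon^c$. On $A_\epsilon$ the integrand equals $\partial_x u(t-\tau^{t,x},\zeta_{t-\tau^{t,x}})$, bounded in modulus by $\bar\varepsilon$ by hypothesis, and bounded convergence gives the correct limit over $\{\tau^{t,x}<t-s_*\}$. On $B_\epsilon\downarrow B_0:=\{\tau^{t,x}=t-s_*\}$ the integrand reads $\partial_x u(s_*+\epsilon,x+B_{t-s_*-\epsilon})$; because $\{x+B_{t-s_*}=\Lambda_{s_*}\}$ is a null set, on $B_0$ the limit point $(s_*,x+B_{t-s_*})$ lies almost surely in $\mathring D$ where $\partial_x u$ is continuous by Proposition \ref{prop:analytic.1}, yielding pointwise almost-sure convergence to $\partial_x u(s_*,x+B_{t-s_*})$.

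Upgrading pointwise convergence to convergence in expectation on $B_\epsilon$ is the principal technical hurdle, since $\partial_x u$ may be unbounded near $(s_*,\Lambda_{s_*})$. My approach is a further cutoff at distance $\delta>0$ from $\Lambda_{s_*}$: on the bulk event $\{x+B_{t-s_*-\epsilon}\ge\Lambda_{s_*}+\delta\}$, $(s_*+\epsilon,x+B_{t-s_*-\epsilon})$ stays in a fixed compact subset of $\mathring D$ on which $\partial_x u$ is jointly continuous, and dominated convergence applies for each fixed $\delta$. On the boundary layer $\{\zeta_{s_*+\epsilon}\le x+B_{t-s_*-\epsilon}<\Lambda_{s_*}+\delta\}$, the Gaussian upper bound on the density of $B_{t-s_*-\epsilon}$ controls the contribution by $C\int_{0}^{\delta+\vert\Lambda_{s_*+\epsilon}-\Lambda_{s_*}\vert}\vert\partial_x p(s_*+\epsilon,z)\vert\,\mathrm{d}z$, which I in turn control uniformly in $\epsilon$ by an appropriate application of Proposition \ref{prop:reg:barrier} at an auxiliary base time $t'<s_*$ for which $s_*\in(t',T_{reg}(t'))$ --- legitimate because $s_*<t_*$ ensures that $\rho(t',\cdot)$ enjoys the locally-monotone hypothesis of that proposition. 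The boundary-layer contribution is then bounded by a constant multiple of $\delta$ uniformly in small $\epsilon$, vanishing as $\delta\downarrow 0$; sending first $\epsilon\downarrow 0$ and then $\delta\downarrow 0$ closes the argument.
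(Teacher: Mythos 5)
The difficulty you correctly flag---that $\partial_x u$ may be unbounded near the corner $(s_*,\Lambda_{s_*})$---is not actually resolved by your last step, and this is where the argument breaks. You reduce the boundary-layer contribution on $B_\epsilon$ to a bound of the form $\int_0^{\delta+|\Lambda_{s_*+\epsilon}-\Lambda_{s_*}|}|\partial_x p(s_*+\epsilon,z)|\,\mathrm{d}z \le C\delta$ uniformly in small $\epsilon>0$, and you propose to get this from Proposition \ref{prop:reg:barrier} at a base time $t'<s_*$ with $s_*\in(t',T_{reg}(t'))$. But $s_*<t_*$ only guarantees that $\rho(t',\cdot)$ is locally monotone (hence $T_{reg}(t')>t'$ by Corollary \ref{cor:3.1}); it does not give $T_{reg}(t')>s_*$. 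If, for instance, $\lim_{x\downarrow 0}\rho(s_*,x)>0$---a scenario that nothing in the setup of the Fourth Step of Lemma \ref{zero_number} excludes---then by definition $T_{reg}(t')\le s_*$ for \emph{every} $t'<s_*$, and Proposition \ref{prop:reg:barrier} only controls $\partial_x p$ on $[t'+\eta,T_{reg}(t')-\eta]\times[0,\infty)$, an interval that never reaches $[s_*,s_*+\epsilon_0]$. The uniform $L^1((0,\infty))$ bound from Lemma \ref{lem:3.8} does not salvage the step either: it does not give the equi-integrability near $0$ that you need, and mass could concentrate at the boundary as $\epsilon\downarrow 0$. In short, your proof still requires a uniform-in-$\epsilon$ bound on $\partial_x p(s_*+\epsilon,\cdot)$ near $0$, which is precisely the bound the lemma is designed to dispense with.

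The paper sidesteps this by never passing to the limit directly on $\partial_x u$. It defines $v(t,x):=\EE[\partial_x u(t-\tau^{t,x},x+B_{\tau^{t,x}})]$ (well-defined thanks to the hypothesis $|\partial_x u(\cdot,\zeta_\cdot)|\le\bar\varepsilon$ and Lemma \ref{lem:3.8}), shows $v$ is a classical solution of the heat equation in the interior with the correct limits at the lateral boundary $\zeta$ and at the initial time $s_*$, and then integrates once to form $\widetilde u(t,x):=-\int_x^\infty v(t,y)\,\mathrm{d}y$. The function $\widetilde u$ then solves the heat equation with a \emph{Neumann} condition $\partial_x\widetilde u(\cdot,\zeta_\cdot)=\partial_x u(\cdot,\zeta_\cdot)$ along $\zeta$ and initial datum $u(s_*,\cdot)$. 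The identification $\widetilde u=u$ is carried out via a reflected Brownian motion in the time-dependent domain $\{y>\zeta_{t-r}\}$ and its boundary local time, together with the fact that the reflected process does not hit the corner $(s_*,\Lambda_{s_*})$; this exploits that $u$ itself (rather than $\partial_x u$) is bounded and continuous up to the corner, which is exactly the gain made by the integration step. Differentiating $\widetilde u=u$ in $x$ then yields \eqref{final_FK_proof}. Your Itô-with-truncation setup is a reasonable start, but without the intermediate passage through $u$ (or some other device that replaces boundedness of $\partial_x u$ near the corner by boundedness of $u$), the removal of the $\epsilon$-truncation does not close.
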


\begin{proof}
The idea of the proof is to show that $v$, defined by 
$v(t,x) = 
\EE[\partial_xu(t-\tau^{t,x},x+B_{\tau^{t,x}})]$, for $t \in [s_{*},s]$ and $x \geq \zeta_{t}$, with 
$(t,x) \neq (s_{*},\Lambda_{s_{*}})$, coincides with $\partial_{x} u(t,x)$. To achieve this, we, first, identify $-\int_{x}^{\infty} v(t,y)\,\mathrm{d}y$ with $u(t,x)$, which is easier than identifying $v(t,x)$ with $\partial_{x} u(t,x)$, since $u(t,\cdot)$ is more regular than $\partial_{x} u(t,\cdot)$ in the right neighborhood of $\Lambda_{t}$. 
\vskip 4pt

\textit{First Step.} We begin by noticing that the function $v$ is well-defined and locally bounded in the interior of the region ${\mathcal D}(s_{*},s) :=\{ (t,x) : t \in [s_{*},s], \ x \geq \zeta_{t}\}$. Since $\partial_{x} u$ remains bounded along 
$\zeta$, we only need to verify that $\EE[ | \partial_{x} u(t-s^*,x+B_{t-s^*})| 
\bone_{\{t-s_{*}=\tau^{t,x}\}}]<\infty$. Recalling the notation $g(s,y)$ for the heat kernel at time $s$ and point $y$, we infer the latter inequality from
$\int_{\Lambda_{s_{*}}}^{\infty} \vert \partial_x u(s_*,y) \vert  \, g(t-s_{*},x-y) \,\mathrm{d}y<\infty$, which follows from Lemma \ref{lem:3.8}.

Moreover, the absolute integrability of $\partial_xu(s_*,\cdot)$ also implies that, for a collection of smooth  
functions $\phi^{\varepsilon}:\,{\mathbb R}\rightarrow[0,1]$, $\varepsilon \in (0,1/2)$, with $\phi^{\varepsilon}(x) = 0$ if $x\leq \Lambda_{s_{*}} + \varepsilon$ or 
$x \geq \Lambda_{s*} + 2/\varepsilon$
 and $\phi^{\varepsilon}(x)=1$ if $x \in [ \Lambda_{s_{*}} + 2 \varepsilon, \Lambda_{s_{*}} + 1/\varepsilon]$, 
 it holds that 
\begin{equation}
\label{final_FK_lim}
\begin{split}
v(t,x) &= \lim_{\varepsilon \downarrow 0} \EE\bigl[\partial_xu(t-\tau^{t,x},x+B_{\tau^{t,x}}) \phi^{\varepsilon} (x+ B_{\tau^{t,x}})\bigr]
=: \lim_{\varepsilon \downarrow 0} v^{\varepsilon}(t,x),   
\end{split}
\end{equation}
for $x\ge\zeta_t$ and $s_*\le t\le s$, with $(t,x) \neq (s_{*},\Lambda_{s_{*}})$. 
Notice that, for any $\varepsilon >0$, the function $\partial_{x} u(t,x) \phi^{\varepsilon}(x)$ is smooth and bounded 
in $(t,x) \in {\mathcal D}(s_{*},s)$. Indeed, by the interior estimates for the heat equation (see e.g.~\cite[chapter IV, Theorem 10.1]{Lady}), $\partial_{x} u(t,x)$ remains bounded over $(t,x)$ changing on any compact inside $\mathring{D}$, with $\mathring{D}$ as in the statement of Proposition \ref{prop:analytic.1}. Hence, letting $\tau^{\epsilon,t,x}:=\tau^{t,x} \wedge (t-s_{*}-\epsilon)$ for $\epsilon>0$, we get 
\begin{equation*}
\begin{split}
v^{\varepsilon}(t,x) &= \lim_{\epsilon \downarrow 0}
\EE\bigl[\partial_xu(t-\tau^{\epsilon,t,x},x+B_{\tau^{\epsilon,t,x}}) \phi^{\varepsilon}(x+ B_{\tau^{\epsilon,t,x}})\bigr]
=: \lim_{\epsilon \downarrow 0} v^{\varepsilon,\epsilon}(t,x),  
\end{split}
\end{equation*}
for $x\ge\zeta_t$ and $s_* <  t\le s$.
Since $\partial_{x} u(t,x) \phi^{\varepsilon}(x)$ is bounded and smooth in $(t,x) \in {\mathcal D}(s_{*}+\epsilon,s)$,
the function $v^{\varepsilon,\epsilon}$ must coincide with the smooth solution of the heat equation on the domain ${\mathcal D}(s_{*}+\epsilon,s)$ with $\partial_{x} u(t,x) \phi^{\varepsilon}(x)$ as the boundary condition on 
$\partial {\mathcal D}(s_{*}+\epsilon,s)$ ({see
\cite[Theorem 10.4.1]{KrylovHolder}, which we can apply here after changing $u(t,x)$ to $u(t,x+\zeta_t)$}). Hence, for $s_{*} + \epsilon < t \leq s$ and $x > \zeta_{t}$, 
we have 
$\partial_t v^{\varepsilon,\epsilon}(t,x)=\frac{1}{2}\partial_{xx}v^{\varepsilon,\epsilon}(t,x)$.
Clearly, by the same argument as the one we used to show the local boundedness of $v$, we can find, on a neighborhood of $(t,x)$, an upper bound for $|v^{\varepsilon,\epsilon}|$ that is independent of $\varepsilon$ and $\epsilon$ (at least for $\epsilon>0$ small enough). 
Invoking, again, the interior estimates for the heat equation, we also obtain uniform bounds for the absolute values of the derivatives of $v^{\varepsilon,\epsilon}$
on a neighborhood of $(t,x)$. Letting $\epsilon$ and then $\varepsilon$ tend to $0$, 
we deduce that $v$ is smooth in the interior of 
${\mathcal D}(s_{*},s)$
and satisfies 
$\partial_t v^{\varepsilon,\epsilon}(t,x)=\frac{1}{2}\partial_{xx}v^{\varepsilon,\epsilon}(t,x)$ in the classical sense, 
for $x>\zeta_t$ and $t\in(s_*,s)$.
\vskip 4pt

\textit{Second Step.} Next, we claim that 
$\lim_{(t,x) \rightarrow (r,\zeta_r)} v(t,x)=\partial_x u(r,\zeta_r)$, $r\in(s^*,s]$. 
The proof relies on three key observations. 

The first observation is that, in the formula 
\eqref{final_FK_lim}, we can choose $\varepsilon>0$ small enough to make 
$\EE[\partial_xu(t-\tau^{t,x},x+B_{\tau^{t,x}}) 
\bone_{\{t-s_{*}=\tau^{t,x},x+B_{\tau^{t,x}}\leq \Lambda_{s*}+2\varepsilon\}}]$ as small as needed, uniformly in $(t,x)$ in a neighborhood of $(r,\zeta_{r})$. 
Indeed, it suffices to make 
$\int_0^{\Lambda_{s_{*}}+2\varepsilon} \vert \partial_x u(s_*,y) \vert  \, g(t-s_{*},x-y) \,\mathrm{d}y$ small by means of 
Lemma \ref{lem:3.8}. 

The second observation is that, on the event 
$\{t-s_{*}=\tau^{t,x},x+B_{\tau^{t,x}} > \Lambda_{s*}+2\varepsilon\}$,
$\vert \partial_xu(t-\tau^{t,x},x+B_{\tau^{t,x}}) \vert$ remains bounded by a constant $C_{\varepsilon}$ (since $\partial_{x} u$ is bounded away from $\partial D$). Combining this with the fact that, for every $\epsilon >0$, 
 $\lim_{x \downarrow \zeta_{t}} {\mathbb P}(\tau^{t,x} > \epsilon)=0$, uniformly in $t$ in a neighborhood of $r$, we obtain
\begin{equation*}
\limsup_{x \downarrow \zeta_{t}}
\EE\bigl[ \vert \partial_xu(t-\tau^{t,x},x+B_{\tau^{t,x}}) \vert 
\,\bone_{\{t-s_{*}=\tau^{t,x},x+B_{\tau^{t,x}}> \Lambda_{s*}+\varepsilon\}} \bigr]
\leq C_{\varepsilon}
\lim_{x \downarrow \zeta_{t}} \PP \bigl( \tau^{t,x}=t-s_{*} \bigr)  = 0,
\end{equation*}
with the limit on the right-hand side being uniform in $t$ on a neighborhood of $r$. 

The last observation is that, on the event 
$\{ \tau^{t,x} < t-s_{*}\}$, $\vert \partial_xu(t-\tau^{t,x},x+B_{\tau^{t,x}}) \vert$
is bounded by $\bar \varepsilon$ and that 
$\partial_x u(\cdot,\zeta_\cdot)$ is continuous on $(s^*,s]$. Therefore,
\begin{equation*}
\begin{split}
\lim_{x \downarrow \zeta_{t}}
\EE\bigl[  \partial_xu(t-\tau^{t,x},x+B_{\tau^{t,x}})\,
\bone_{\{\tau^{t,x} < t-s_{*}\}} \bigr]
&=
\lim_{x \downarrow \zeta_{t}}
\EE\bigl[  \partial_xu (t-\tau^{t,x},\zeta_{t-\tau^{t,x}})\, 
\bone_{\{\tau^{t,x} < t-s_{*}\}}\bigr]
\\
&= \partial_xu (t,\zeta_{t}),
\end{split}
\end{equation*}
and, once again, 
the limit is uniform in $t$ on a neighborhood of $r$. Invoking, once more, the fact that 
$\partial_x u(\cdot,\zeta_\cdot)$ is continuous on $(s^*,s]$, we deduce that $\lim_{(t,x) \rightarrow (r,\zeta_r)} v(t,x)=\partial_x u(r,\zeta_r)$, $r\in(s^*,s]$. 
\vskip 4pt

\textit{Third Step.} Next, we show that $\lim_{t\downarrow s_*} v(t,x)=\partial_x u(s_*,x)$, $x>\Lambda_{s_*}$ locally uniformly.
Indeed, 
with the same notation as in 
\eqref{final_FK_lim}, the fact that $\partial_{x} u(t,x) \phi^{\varepsilon}(x)$ is smooth 
in $(t,x) \in {\mathcal D}(s_{*},s)$ implies 
 that, for any $\varepsilon >0$, $\lim_{t \downarrow s_{*}}v^{\varepsilon}(t,x) = \partial_{x} u(s_{*},x) \phi^{\varepsilon}(x)$, the limit 
 being locally uniform in $x > \Lambda_{s_{*}}$. 
 
Thus, it remains to prove that, for $\delta >0$, we can make the distance $\vert v(t,x) - v^{\varepsilon}(t,x) \vert$ as small as needed by choosing, uniformly in $x \in [\Lambda_{s_{*}}+\delta,\Lambda_{s_{*}}+1/\delta]$, $t$ close enough to $s_{*}$ and $\varepsilon>0$ small enough. This follows from the following two facts. First, 
\begin{equation*}
\begin{split}
&\;\EE\Bigl[\bigl\vert \partial_xu(t-\tau^{t,x},x+B_{\tau^{t,x}}) \bigr\vert \bigl( 1 - \phi^{\varepsilon} (x+ B_{\tau^{t,x}})
\bigr) \,
\bone_{\{t-s_{*}=\tau^{t,x}\}}
\Bigr] 
\\
&\leq 
\int_{\Lambda_{s_{*}}}^{\Lambda_{s_{*}}+2\varepsilon} \vert \partial_x u(s_*,y) \vert  \, g(t-s_{*},x-y) \,\mathrm{d}y
+
\int_{\Lambda_{s_{*}}+1/\varepsilon}^{\infty} \vert \partial_x u(s_*,y) \vert  \, g(t-s_{*},x-y) \,\mathrm{d}y,
\end{split}
\end{equation*}
with the right-hand side of the above tending to $0$ as $\varepsilon\rightarrow0$, uniformly in $(t,x) \in [s_{*},s] \times [\Lambda_{s*}+\delta,\Lambda_{s_{*}}+1/\delta]$. Second, 
\begin{equation*}
\begin{split}
&\EE\Bigl[ \bigl\vert \partial_xu(t-\tau^{t,x},x+B_{\tau^{t,x}}) \bigr\vert \bigl( 1 - \phi^{\varepsilon} (x+ B_{\tau^{t,x}})
\bigr) 
\bone_{\{\tau^{t,x}<t-s_{*}\}}
\Bigr] 
\leq \bar{\varepsilon}\, {\mathbb P} (   \tau^{t,x} < t-s_{*}  ),
\end{split}
\end{equation*} 
and the latter can be made as small as needed by choosing $t$ close enough to $s_{*}$, uniformly in $x \in [\Lambda_{s_{*}}+\delta,\Lambda_{s_{*}}+1/\delta]$.
\vskip 4pt

\textit{Fourth Step.} Our next goal is to identify $u(t,x)$, for
$x \geq \zeta_t$ and $s_* \leq t \le s$,  with 
\begin{equation*}
\widetilde{u}(t,x):=-\int_x^\infty v(t,y)\,\mathrm{d}y.
\end{equation*}
The function $\widetilde u$ is well-defined thanks to the estimate
\begin{equation}\label{v_est}
\begin{split}
|v(t,x)|\le\int_{\Lambda_{s_*}}^\infty |\partial_x u(s_*,y)|\,g(t-s_*,x-y) \,\mathrm{d}y+\bar{\varepsilon}\PP(\tau^{t,x}<t-s_*), \end{split}
\end{equation}
for $x \geq \zeta_t$ and $s_*\leq t\le s$ (the term $g(t-s_{*},x-y)\,\mathrm{d}y$ being understood as the Dirac mass at $x$ when 
$t=s_{*}$). 
By Lemma \ref{lem:3.8} and the sub-Gaussianity of $\min_{[0,s-s_*]} B$, we conclude that the right-hand side of the above is integrable in $x$. Hence, 
$\widetilde{u}$ is a well-defined and bounded function of $(t,x)$. We claim that it is smooth in the interior of ${\mathcal D}(s_{*},s)$. 
Indeed,
with the same notation as in 
\eqref{final_FK_lim}, we can write
\begin{equation*}
\widetilde{u}(t,x)=
\lim_{\varepsilon \downarrow 0}\, \widetilde{u}^{\varepsilon}(t,x), \quad 
\widetilde{u}^{\varepsilon}(t,x) := 
-\int_x^\infty v^{\varepsilon}(t,y)\,\mathrm{d}y,\quad x>\zeta_t,\;\;s_*<t\le s.
\end{equation*}
Now, the analogue of 
\eqref{v_est} for $v^{\varepsilon}$ shows that 
$v^{\varepsilon}(t,x)$ decays exponentially fast to $0$ as $x$ tends to $\infty$, uniformly in $t \in [s_{*},s]$. 
Invoking, again, the interior estimates for the heat equation, we deduce that the derivatives of $v^{\varepsilon}$ also decay 
exponentially fast in $x$ when $t$ is restricted to a compact subset of $(s_{*},s]$. In particular, it is easy to see that, for any 
$\varepsilon>0$, 
$\widetilde{u}^{\varepsilon}$ satisfies the heat equation 
in the interior of 
${\mathcal D}(s_{*},s)$. 
By \eqref{v_est} again, the family $\widetilde{u}^{\varepsilon}$, $\varepsilon \in (0,1/2)$ is uniformly bounded on compact subsets of ${\mathcal D}(s_{*},s)$. Another application of the interior estimates for the heat equation shows that 
the derivatives of $\widetilde{u}^{\varepsilon}$, $\varepsilon \in (0,1/2)$ are also uniformly bounded on compact subsets of ${\mathcal D}(s_{*},s)$. Thus, we deduce that $\tilde{u}$ is smooth in the interior of ${\mathcal D}(s_{*},s)$
and satisfies $\partial_t\widetilde{u}(t,x)=\frac{1}{2}\partial_{xx}\widetilde{u}(t,x)$, $x>\zeta_t$, $t\in(s_*,s)$.

At the boundary, we 
clearly have $\lim_{(t,x) \downarrow (r,\zeta_r)} \partial_x\widetilde{u}(t,x)=\lim_{(t,x)\downarrow (r,\zeta_r)} v(t,x)=\partial_x u(r,\zeta_r)$, for $r\in(s_*,s]$. We also claim that $\lim_{(t,x) \downarrow (r,\zeta_r)} \widetilde{u}(t,x)= \widetilde u(r,\zeta_r)$, for $r\in(s_*,s]$. To prove the latter claim, we split 
$\widetilde{u}(t,x)$ into three parts:
$$\widetilde{u}(t,x) = \int_{x}^{\zeta_{r}+\epsilon} v(t,y)\,{\mathrm d}y + 
 \int_{\zeta_{r}+\epsilon}^K v(t,y)\,{\mathrm d}y
 + \int_{K}^{\infty} v(t,y)\,{\mathrm d}y,$$
 for $\zeta_{t} \leq x \leq \zeta_{r}+\epsilon$,  with $\epsilon>0$ small and $K>0$ large.
 By the continuity of $v$ at $(r,\zeta_{r})$, we can choose $\epsilon>0$ and $\vert t-r \vert$
small enough to make 
$\int_{x}^{\zeta_{r}+\epsilon} v(t,y)\,
{\mathrm d}y - 
\int_{\zeta_{r}}^{\zeta_{r}+\epsilon} v(r,y)\,{\mathrm d}y$ as small as needed. 
Then, by \eqref{v_est}, we can choose $K$ large enough to make 
$\int_{K}^{\infty} v(t,y)\,{\mathrm d}y$ small, uniformly in $t$ changing in a neighborhood of $r$. 
Finally, 
for fixed values of $\epsilon$ and $K$, we can decrease $\vert t-r \vert$ if necessary to 
make 
$\int_{\zeta_{r}+\epsilon}^K (v(t,y)-v(r,y))\,{\mathrm d}y$ as small as needed.

Moreover, we claim that $\lim_{t\downarrow s_*} \widetilde{u}(t,x)=-\int_x^\infty \partial_x u(s_*,y)\,\mathrm{d}y=u(s_*,x)$, $x>\Lambda_{s_*}$ locally uniformly. 
The second of the two equalities is a direct consequence of Lemma \ref{lem:3.8}. 
As for the first one, we notice 
that, for any $\varepsilon >0$, $\lim_{t\downarrow s_*} \widetilde{u}^{\varepsilon}(t,x)=-\int_x^\infty \partial_x u (s_*,y) \phi^{\varepsilon}(y)\,\mathrm{d}y$,  locally uniformly in $x>\Lambda_{s_*}$. The latter follows
from the following two facts: $v^{\varepsilon}$ decays exponentially fast in space, uniformly in time, and 
$\lim_{t \downarrow s_{*}} v^{\varepsilon}(t,x) = \partial_{x} u(s_{*},x) \phi^{\varepsilon}(x)$, 
 locally uniformly in 
$x>\Lambda_{s_{*}}$. Then, it remains to prove that, for any $\delta >0$,  
we can make the distance $\vert \widetilde u (t,x) - \widetilde u^{\varepsilon}(t,x) \vert$
as small as needed by choosing, uniformly in $x \in [\Lambda_{s_{*}}+\delta,\infty)$, 
$t$ close enough to $s_{*}$ and $\varepsilon>0$ small enough. By the second step, we already know that  
we can make the distance $\vert v(t,x) - v^{\varepsilon}(t,x) \vert$
as small as needed by choosing, uniformly in $x \in [\Lambda_{s_{*}}+\delta,\Lambda_{s_{*}}+1/\delta]$, 
$t$ close enough to $s_{*}$ and $\varepsilon>0$ small enough. Hence, it suffices to establish 
\begin{equation}
\lim_{\delta \rightarrow 0}\,\int_{\Lambda_{s*}+1/\delta}^{\infty} \vert v^{\varepsilon}(t,y) - v(t,y) \vert \,\mathrm{d}y = 0,
\end{equation}
uniformly in $t \in [s_{*},s]$ and $\varepsilon \in (0,1/2)$. The latter follows from the appropriate versions of \eqref{v_est}. 
\vskip 4pt

\textit{Fifth Step.} All in all, $\widetilde{u}$ is a classical solution of the problem
\begin{equation}
\begin{split}
&\partial_t\widetilde{u}=\frac{1}{2}\partial_{xx}\widetilde{u}\quad\text{on}\quad\{(t,x)\in(s_*,s)\times[0,\infty):\,x>\zeta_t\}, \\
&\widetilde{u}(s_*,x)=u(s_*,x),\quad x>\Lambda_{s_*}\quad\text{and}\quad
\partial_x\widetilde{u}(t,\zeta_t)=\partial_xu(t,\zeta_t),\quad t\in(s_*,s),
\end{split}
\end{equation}
and the same is true for $u$. Both $u$ and $\tilde u$ are continuous and have continuous derivatives at any point $(t,\zeta_{t})$ of the boundary, for $t \in (s_{*},s)$. Hence, for any $t\in(s_*,s)$, $x>\zeta_t$, and with the reflected Brownian motion $R^{t,x}$ in the time-dependent domain $\{(r,y)\in[0,t-s_*]\times[0,\infty):\,y>\zeta_{t-r}\}$, started from $x$, and its boundary local time $\ell^{t,x}$ (see \cite[section 2]{BCS2} for more details on such processes), we have 
\begin{equation}
\begin{split}
& \widetilde{u}(t,x)=\EE\bigg[\widetilde{u}(s_*+\epsilon,R^{t,x}_{t-s_*-\epsilon})
-\int_0^{t-s_*-\epsilon} \partial_x u(t-r,\zeta_{t-r})\,\mathrm{d}\ell^{t,x}_r\bigg]\quad\text{and} \\ & u(t,x)=\EE\bigg[u(s_*+\epsilon,R^{t,x}_{t-s_*-\epsilon})-\int_0^{t-s_*-\epsilon} \partial_x u(t-r,\zeta_{t-r})\,\mathrm{d}\ell^{t,x}_r\bigg],
\end{split}
\end{equation}
for all $\epsilon\in(0,t-s_*)$ (cf.~\cite[Theorem 2.8]{BCS1}). Relying on $\PP(R^{t,x}_{t-s_*}=\Lambda_{s_*})=0$ (see \cite[Theorem 2.2]{BCS2}), as well as on the locally uniform convergences $\lim_{t\downarrow s_*} \widetilde{u}(t,x)=u(s_*,x)=\lim_{t\downarrow s_*} u(t,x)$, $x>\Lambda_{s_*}$, and on the boundedness of $\widetilde{u}$, $u$, and $|\partial_x u(t,\zeta_t)|\leq \bar{\varepsilon}$, $r\in[s^*,t]$, we consider $\epsilon\downarrow0$ to obtain:
\begin{equation}
\widetilde{u}(t,x)=\EE\bigg[u(s_*,R^{t,x}_{t-s_*})-\int_0^{t-s_*}\partial_x u(t-r,\zeta_{t-r})\,\mathrm{d}\ell^{t,x}_r\bigg]=u(t,x). 
\end{equation} 
Thus, $\partial_x u(t,x)=\partial_x\widetilde{u}(t,x)=v(t,x)=\EE[\partial_xu(t-\tau^{t,x},x+B_{\tau^{t,x}})]$, yielding the Feynman-Kac formula \eqref{final_FK_proof}. 
\end{proof}


\section{Proof of Theorem \ref{thm:uniq}}
\label{se:5}

In this section, we prove the uniqueness of the physical solution $(X,\Lambda)$ of \eqref{Stefan_prob}, under the assumptions of Theorem \ref{thm1} (which are in force throughout the section). The strategy of the proof is to represent a physical solution as  a fixed point of a mapping that has a contraction property. We start with the following technical lemma.

\begin{lemma}\label{le:stability.critical.1.n}
Let $\mathcal{X}\geq0$ be a random variable with a bounded density $\rho$ on $(0,\infty)$ such that
\begin{equation*}
\rho(x) \leq \frac{1}{\alpha}-\psi(x),\quad x>0,
\end{equation*}
where $\psi$ is non-decreasing and strictly positive on a non-trivial interval $(0,\delta_0)$, and  $\psi(0+)=0$. Then, for any $t\ge0$, there exist $T>t$ and $\varepsilon_0,\delta>0$ such that, for all $s\in[t,T]$ and all measurable $\widetilde{L}:[t,T]\rightarrow[0,\varepsilon_0]$, we have
$$
\PP\bigl(0<\inf_{r\in[t,s]} (\mathcal{X} + \widetilde{B}_{r} - \widetilde{L}_r)\leq x\bigr) \leq \frac{x}{\alpha} -  \frac{1}{2}\int_0^x \psi(z)\,\mathrm{d}z,\quad x\in[0,\delta],
$$
where $\widetilde{B} = (\widetilde{B}_r)_{r \geq t }$ is a standard Brownian motion started from $\widetilde{B}_t=0$.
\end{lemma}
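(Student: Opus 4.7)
The plan is to exploit the fact that $\widetilde{L}$ is a deterministic function of $r$, so that the randomness inside the infimum comes entirely from $\widetilde{B}$, which is independent of $\mathcal{X}$. First I would write
\[
\inf_{r \in [t,s]}\bigl(\mathcal{X} + \widetilde{B}_r - \widetilde{L}_r\bigr) = \mathcal{X} - W, \qquad W := \sup_{r \in [t,s]}\bigl(\widetilde{L}_r - \widetilde{B}_r\bigr),
\]
observing that $W \geq \widetilde{L}_t \geq 0$ and that $W$ depends on $\widetilde{B}$ only, hence is independent of $\mathcal{X}$. Denoting by $F$ the cumulative distribution function of the restriction of the law of $\mathcal{X}$ to $(0,\infty)$, conditioning on $W$ gives
\[
\PP\bigl(0 < \mathcal{X} - W \leq x\bigr) = \EE\bigl[F(W+x) - F(W)\bigr].
\]
This plays the same role as the ``shift'' trick used in the second case of Proposition \ref{prop:sec2.p0.upperbd}, but sidesteps the fact that $\widetilde{L}$ need not be monotone here.

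On the event $\{W+x \leq \delta_0\}$, the interval $[W,W+x]$ lies inside $[0,\delta_0]$, where $\rho(y) \leq 1/\alpha - \psi(y)$ with $\psi$ non-decreasing, so
\[
F(W+x) - F(W) \leq \frac{x}{\alpha} - \int_0^x \psi(W+z)\, dz \leq \frac{x}{\alpha} - \int_0^x \psi(z)\, dz,
\]
the last inequality using $\psi(W+z) \geq \psi(z)$ by monotonicity. On the complement $\{W+x > \delta_0\}$ I would use the crude bound $F(W+x) - F(W) \leq x/\alpha$, coming from $\rho \leq 1/\alpha$. Summing the two contributions yields
\[
\PP\bigl(0 < \mathcal{X} - W \leq x\bigr) \leq \frac{x}{\alpha} - \PP\bigl(W + x \leq \delta_0\bigr) \int_0^x \psi(z)\, dz.
\]

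The main, and essentially only, obstacle is then to calibrate $\varepsilon_0$, $\delta$ and $T-t$ so that $\PP(W + x \leq \delta_0) \geq 1/2$ uniformly in $s \in [t,T]$, in the admissible $\widetilde{L}$, and in $x \in [0,\delta]$. From the trivial bound $W \leq \varepsilon_0 - \inf_{r \in [t,s]} \widetilde{B}_r$ and the reflection principle one obtains, for $x \leq \delta_0/2$,
\[
\PP\bigl(W + x > \delta_0\bigr) \leq 2\,\bar{\Phi}\!\left(\frac{\delta_0/2 - \varepsilon_0}{\sqrt{s-t}}\right),
\]
which becomes arbitrarily small as $s-t \downarrow 0$, for any fixed $\varepsilon_0 < \delta_0/2$. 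Choosing $\varepsilon_0 < \delta_0/4$, $\delta = \delta_0/2$, and $T-t$ small enough that this tail at $s=T$ does not exceed $1/2$ finishes the proof. Note that the approach in fact delivers the slightly stronger bound $x/\alpha - \int_0^x \psi$; the factor $1/2$ in the statement presumably leaves slack for error terms in the downstream contraction argument of Section \ref{se:5}.
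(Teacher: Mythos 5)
Your reformulation --- writing the infimum as $\mathcal{X}-W$ with $W:=\sup_{r\in[t,s]}(\widetilde{L}_r-\widetilde{B}_r)\geq 0$ independent of $\mathcal{X}$, conditioning to get $\EE[F(W+x)-F(W)]$, and splitting on the size of $W$ --- is exactly the structure of the paper's argument, and your treatment of the near event $\{W+x\leq\delta_0\}$ is correct. The gap is on the complementary event: the bound $F(W+x)-F(W)\leq x/\alpha$ that you invoke there requires $\rho\leq 1/\alpha$ \emph{globally}, and that is not among the hypotheses. The lemma only assumes $\rho(x)\leq 1/\alpha-\psi(x)$ with $\psi$ non-decreasing, strictly positive, and vanishing at $0+$ \emph{on} $(0,\delta_0)$; outside $(0,\delta_0)$ nothing constrains the sign of $\psi$, so $\rho$ may exceed $1/\alpha$. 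This is precisely why ``$\rho$ bounded'' appears as an explicit hypothesis and why the paper's proof carries $\|\rho\|_{L^\infty}$ (which can be $>1/\alpha$) through the estimate. It is also the case of interest: the lemma is applied in Proposition~\ref{prop:uniqueness.stability} to the density $p(t,\cdot)$ of a physical solution just after a jump, which is pinned below $1/\alpha$ only near $0$.

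The gap is not cosmetically fixable by replacing $x/\alpha$ with $\|\rho\|_{L^\infty}\,x$ on $\{W+x>\delta_0\}$. The resulting excess $(\|\rho\|_{L^\infty}-1/\alpha)\,x\,\PP(W+x>\delta_0)$ is of order $x$ for fixed $T$ (since $\PP(W>\delta_0)>0$), whereas the savings $\int_0^x\psi$ is $o(x)$ as $x\downarrow 0$ because $\psi(0+)=0$; hence the target inequality would fail for small $x$ with any fixed $T$, and the required uniformity in $x\in[0,\delta]$ is lost. The paper's route around this is the extra dyadic split of the tail into $W\in(\varepsilon,2\varepsilon]$ and $W>2\varepsilon$: on the annulus one has a genuine strict surplus $F(W+x)-F(W)\leq C_1 x$ with $C_1=1/\alpha-\psi(\delta_1/3)<1/\alpha$, and the far tail is absorbed into that surplus via the Gaussian ratio bound $\PP(W>2\varepsilon)\leq\gamma\,\PP(W\in(\varepsilon,2\varepsilon])$ with $\gamma$ small for $T-t$ small, so that the entire tail contributes strictly less than $(x/\alpha)\,\PP(W>\varepsilon)$ and the $\int_0^x\psi$-savings from the near region survive undiminished. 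You need this extra annulus argument (or an equivalent mechanism) to close the proof.
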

\begin{proof}
The proof is a modification of the ``Second case" in the proof of Proposition \ref{prop:sec2.p0.upperbd}. We denote by $F$ the cumulative distribution function of $\rho$ and obtain for $s\ge t$:
\begin{equation}\label{eq.uniqueness.mainTechLemma.eq1}
\begin{split}
&\;\PP\bigl(0<\inf_{r\in[t,s]} (\mathcal{X} + \widetilde{B}_{r} - \widetilde{L}_r)\leq x\bigr)
\\ 
&=\int_{(-\varepsilon,0]} \bigl( F(x - y) - F(-y) \bigr)\,\widetilde{\nu}_s(\mathrm{d}y)
 +  \int_{(-\infty,-\varepsilon]}
 \bigl( F(x - y) - F(- y) \bigr)\,\widetilde{\nu}_s(\mathrm{d}y),
\end{split}
\end{equation}
with an arbitrary constant $\varepsilon>0$ and with $\widetilde{\nu}_s$ being the law of $\inf_{r\in[t,s]} (\widetilde{B}_{r} - \widetilde{L}_r)$.

We estimate the right-hand side of \eqref{eq.uniqueness.mainTechLemma.eq1} using methods similar to those in  the proof of Proposition \ref{prop:sec2.p0.upperbd}. We begin with the first integral in (\ref{eq.uniqueness.mainTechLemma.eq1}), which poses the main difference to the proof of Proposition \ref{prop:sec2.p0.upperbd}. Since $\psi$ is non-decreasing on $(0,\delta_0)$, we conclude that, for all $\delta_1\in(0,\delta_0)$,
\begin{equation*}
F(x-y) - F(-y) = \int_{0}^{x} \rho(z - y)\,\mathrm{d}z
\leq \frac{x}{\alpha} - \int_0^x \psi(z-y)\,\mathrm{d}z
\leq \frac{x}{\alpha} - \int_0^x \psi(z)\,\mathrm{d}z,\quad x,-y\in[0,\delta_1/2]. 
\end{equation*}
Thus, for all $x,\varepsilon\in(0,\delta_1/2]$, the first term on the right-hand side of (\ref{eq.uniqueness.mainTechLemma.eq1}) does not exceed
\begin{equation}
\label{eq:5:1}
\frac{x}{\alpha}\,\widetilde{\nu}_s((-\varepsilon,0])
- \int_0^x \psi(z)\,\mathrm{d}z \cdot\widetilde{\nu}_s((-\varepsilon,0]).
\end{equation}

Next, we bound the second integral in (\ref{eq.uniqueness.mainTechLemma.eq1}). Notice that, for any $z\in\mathbb{R}$,
$$
\PP\bigl(\inf_{r\in[t,s]} (\widetilde{B}_{r} - \widetilde{L}_r)\leq z\bigr) \leq \PP\bigl(\inf_{r\in[t,s]} \widetilde{B}_{r} \leq z+\varepsilon_0\bigr).
$$
In addition, due to the fast decay, as $s\downarrow t$, of the density of $\inf_{r\in[t,s]} \widetilde{B}_{r}$ we have, for $\varepsilon\geq\varepsilon_0$,
\begin{equation*}
\begin{split}
\PP\bigl(\inf_{r\in[t,s]} \widetilde{B}_{r} \leq \varepsilon_0 - 2\varepsilon\bigr)
&\leq e^{-(\varepsilon-\varepsilon_0)^2/(2(s-t))}\,\PP\bigl(\inf_{r\in[t,s]} \widetilde{B}_{r} \leq -\varepsilon\bigr) \\
&\leq e^{-(\varepsilon-\varepsilon_0)^2/(2(s-t))}\,\PP\bigl(\inf_{r\in[t,s]} (\widetilde{B}_{r} - \widetilde{L}_r) \leq -\varepsilon\bigr).
\end{split}
\end{equation*}
Hence, there exists a $T=T(\varepsilon-\varepsilon_0)>t$ such that, for all $s\in[t,T]$,
\begin{equation*}
\begin{split}
&\;\PP\bigl(\inf_{r\in[t,s]} (\widetilde{B}_{r} - \widetilde{L}_r)\leq -2\varepsilon\bigr) \\
&\leq e^{-(\varepsilon-\varepsilon_0)^2/(2(s-t))}\,\PP\bigl(\inf_{r\in[t,s]} (\widetilde{B}_{r} - \widetilde{L}_r)\leq -\varepsilon\bigr)
\\
&=e^{-(\varepsilon-\varepsilon_0)^2/(2(s-t))} \Bigl(\PP\bigl(\inf_{r\in[t,s]} (\widetilde{B}_{r} - \widetilde{L}_r)\leq -2\varepsilon\bigr)
+\PP\bigl(-2\varepsilon < \inf_{r\in[t,s]} (\widetilde{B}_{r} - \widetilde{L}_r)\leq -\varepsilon\bigr)\Bigr).
\end{split}
\end{equation*}
Setting $\varepsilon=\delta_1/3$ (upon decreasing $ \varepsilon_{0}>0$ if necessary to ensure $\varepsilon_0<\delta_1/3$) we conclude that, for any $\gamma>0$, there exists a $T>t$ such that 
$$
\PP\bigl(\inf_{r\in[t,s]} (\widetilde{B}_{r} - \widetilde{L}_r)\leq -2\varepsilon\bigr)
\leq \gamma \PP\bigl(-2\varepsilon < \inf_{r\in[t,s]} (\widetilde{B}_{r} - \widetilde{L}_r)\leq -\varepsilon\bigr),\quad s\in[t,T].
$$
Moreover, for the same choice of $\varepsilon$, one can find a $C_1<1/\alpha$ so that
$$
F(x - y) - F(- y) \leq C_1 x,\quad x\in[0,\delta_1+y),\quad y\in[-2\varepsilon,-\varepsilon].
$$
Therefore, for $\gamma>0$ satisfying $\gamma\|\rho\|_{L^{\infty}} + C_1<1/\alpha$ and all $s\in[t,T]$, $x\in(0,\delta_1/2]$, 
\begin{equation}
\label{eq:5:2}
\begin{split}
\int_{(-\infty,-\varepsilon]}
\bigl( F(x - y) - F( - y) \bigr)\,\widetilde{\nu}_s(\mathrm{d}y)
&\leq   \|\rho\|_{L^{\infty}}\,x\,\widetilde{\nu}_s((-\infty,-2\varepsilon]) 
+ C_1\,x\,\widetilde{\nu}_s((-2\varepsilon,-\varepsilon]) \\
&\leq x(\gamma\|\rho\|_{L^{\infty}} + C_1) \widetilde{\nu}_s((-2\varepsilon,-\varepsilon])
\\
&< \frac{x}{\alpha} \,\widetilde{\nu}_s((-\infty,-\varepsilon]).
\end{split}
\end{equation}
Collecting the estimates \eqref{eq:5:1} and \eqref{eq:5:2}, and decreasing $T>t$ if necessary to guarantee
$$
\widetilde{\nu}_s((-\varepsilon,0]) \geq \frac{1}{2},\quad s\in[t,T],
$$
we obtain
$$
\PP\bigl(0<\inf_{r\in[t,s]} (\mathcal{X} + \widetilde{B}_{r} - \widetilde{L}_r)\leq x\bigr) \leq \frac{x}{\alpha} - \frac{1}{2}\int_0^x \psi(z)\,\mathrm{d}z,
$$
for all $s\in[t,T]$, all measurable $\widetilde{L}:[t,T]\rightarrow[0,\varepsilon_0]$, and all $0\le x\le\delta_1/2=:\delta$.
\end{proof}

\medskip

The following proposition proves the local uniqueness of the physical solution by establishing the aforementioned contraction property. The proof is similar to the arguments presented in \cite{FP2} and \cite{LS2}. Recall that the assumptions of Theorem \ref{thm1} are in force throughout this section.

\begin{proposition}\label{prop:uniqueness.stability}
Given an initial condition $X_{0-}$ satisfying the conditions of Theorem \ref{thm1}, let $(X^1,\Lambda^1)$ and $(X^2,\Lambda^2)$ be two physical solutions of \eqref{Stefan_prob} starting from $X_{0-}$ and coinciding on $[0,t)$, for some $t\in[0,\infty)$. (If $t=0$, we just have that $(X^1,\Lambda^1)$ and $(X^2,\Lambda^2)$ coincide at $t=0-$.) Then, there exists a $T>t$ such that
$$
X^2_s-X^1_s=\Lambda^1_s - \Lambda^2_s = 0,
\quad s\in[t,T].
$$
\end{proposition}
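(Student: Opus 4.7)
The plan is to align the two solutions at time $t$ and then exploit Lemma \ref{le:stability.critical.1.n} to deduce a nonlinear inequality on the increments $\ell^i_s:=\Lambda^i_s-\Lambda_t$, $s\in[t,T]$, that forces $\ell^1\equiv\ell^2$ on a right neighborhood of $t$. Since $(X^1,\Lambda^1)$ and $(X^2,\Lambda^2)$ coincide on $[0,t)$, the law of $X^i_{t-}\mathbf{1}_{\{\tau^i\ge t\}}$, and hence its density $\rho(t,\cdot)$, is common to both solutions. The physical jump condition \eqref{phys_cond} then forces $\Lambda^1_t-\Lambda^1_{t-}=\Lambda^2_t-\Lambda^2_{t-}$, so both solutions agree at $t$ as well and share the post-jump sub-density $p(t,\cdot)$ on $(0,\infty)$.

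The next step is to verify that, in every regime of Theorem \ref{thm1} applied to $p(t,\cdot)$, one can produce $\psi$ non-decreasing and strictly positive on some interval $(0,\delta_0)$, with $\psi(0+)=0$, such that $p(t,x)\le 1/\alpha-\psi(x)$. In regimes (i) and (ii) one has $p(t,0+)<1/\alpha$, and $\psi(x):=\min\bigl((1/\alpha-p(t,0+))/2,\,x\bigr)$ works. In regime (iii), the minimality built into \eqref{phys_cond} ensures that the post-jump density satisfies $p(t,x)<1/\alpha$ on some $(0,\delta)$; the delicate sub-case is $p(t,0+)=1/\alpha$ approached from below, where the choice $\psi(x):=\inf_{y\in(0,x]}(1/\alpha-p(t,y))$ does the job.

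With this in hand, the plan is to write the fixed-point identity
\begin{equation*}
\ell^i_s=\alpha\,\PP\bigl(\mathcal{X}>0,\;\inf_{r\in(t,s]}(\mathcal{X}+\widetilde{B}_r-\ell^i_r)\le 0\bigr),\quad s\in[t,T],
\end{equation*}
where $\mathcal{X}:=X^i_t\mathbf{1}_{\{\tau^i>t\}}$ has sub-density $p(t,\cdot)$ and $\widetilde{B}_r:=B_r-B_t$, after shrinking $T>t$ (using right-continuity of $\Lambda^i$ at $t$) so that $\ell^i_s\le\varepsilon_0$ on $[t,T]$, with $\varepsilon_0$ as in Lemma \ref{le:stability.critical.1.n}. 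Subtracting the identities for $i=1,2$ and noting that, for each $i\in\{1,2\}$, the events $A_i:=\{\mathcal{X}>0,\,\inf_{r\in(t,s]}(\mathcal{X}+\widetilde{B}_r-\ell^i_r)\le 0\}$ satisfy $A_i\setminus A_{3-i}\subset\{0<\inf_{r\in(t,s]}(\mathcal{X}+\widetilde{B}_r-\ell^{3-i}_r)\le M\}$ with $M:=\|\ell^1-\ell^2\|_{\infty,[t,T]}$ (by the Lipschitz stability of the infimum in the drift), Lemma \ref{le:stability.critical.1.n} applied to each such event yields
\begin{equation*}
|\ell^1_s-\ell^2_s|\le M-\frac{\alpha}{2}\int_0^{M}\psi(z)\,\mathrm{d}z,\quad s\in[t,T].
\end{equation*}
Taking the supremum over $s\in[t,T]$ of the left-hand side forces $\int_0^{M}\psi(z)\,\mathrm{d}z=0$ and, by positivity of $\psi$ on $(0,\delta_0)$, $M=0$, provided $T$ has been chosen to guarantee $M\le\delta_0$.

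The main obstacle is the uniform verification of the hypothesis of Lemma \ref{le:stability.critical.1.n} for the post-jump density $p(t,\cdot)$ across all three regimes of Theorem \ref{thm1}, in particular the critical sub-case of (iii) with $p(t,0+)=1/\alpha$ approached from below; this is precisely where Theorem \ref{thm1} and the minimality built into the physical condition \eqref{phys_cond} are indispensable.
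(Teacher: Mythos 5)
Your overall strategy coincides with the paper's: align the two solutions at time $t$ (the physical jump condition forces the jump sizes to agree, so $\mathcal{X}:=X^i_t\mathbf{1}_{\{\tau^i>t\}}$ has the common post-jump sub-density $p(t,\cdot)$), invoke Lemma~\ref{le:stability.critical.1.n} to get a quadratic-type contraction inequality, and conclude that the difference of the $\Lambda^i$'s must vanish on a right neighborhood. Your symmetric treatment of the two solutions (bounding $|\ell^1_s-\ell^2_s|$ via both symmetric differences $A_i\setminus A_{3-i}$) is cosmetically different from the paper's one-sided estimate followed by reversing roles, but is equivalent in substance. The inclusion $A_i\setminus A_{3-i}\subset\{0<\inf(\mathcal{X}+\widetilde B-\ell^{3-i})\le M\}$ and the concluding inequality $M\le M-\tfrac{\alpha}{2}\int_0^M\psi$ are both correct.

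However, your explicit construction of $\psi$ in the critical sub-case of regime (iii) fails. If $p(t,0+)=1/\alpha$ and $p(t,\cdot)<1/\alpha$ on a right neighborhood, then $1/\alpha-p(t,y)\to 0$ as $y\downarrow 0$, and hence
\[
\psi(x):=\inf_{y\in(0,x]}\bigl(1/\alpha-p(t,y)\bigr)=0\quad\text{for every }x>0,
\]
so $\psi\equiv 0$, which violates the strict positivity required by Lemma~\ref{le:stability.critical.1.n} (and, more broadly, the infimum over growing sets is non-increasing in $x$, whereas the lemma needs $\psi$ non-decreasing). The correct input here is structural: by Theorem~\ref{thm1}, $p(t,\cdot)$ is analytic and monotone near $0$; minimality of the physical jump forces $\int_0^x p(t,y)\,\mathrm{d}y<x/\alpha$ for small $x$, which rules out $p(t,\cdot)$ non-decreasing when $p(t,0+)=1/\alpha$; hence $p(t,\cdot)$ is non-increasing near $0$, so $\psi(x):=1/\alpha-p(t,x)$ is non-decreasing, vanishes at $0+$, and is strictly positive on a right neighborhood (any interior zero would contradict monotonicity and analyticity). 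With this corrected choice of $\psi$, your proof goes through and matches the paper's.

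Two small additional remarks. First, the paper simply asserts that $\mathcal{X}$ satisfies the hypotheses of Lemma~\ref{le:stability.critical.1.n} "by Theorem~\ref{thm1}''; your attempt to spell this out is where the error arose, but it is also where the real content of the verification lies, so the impulse was right. Second, the hypothesis of Lemma~\ref{le:stability.critical.1.n} as written demands $\rho(x)\le 1/\alpha-\psi(x)$ for \emph{all} $x>0$ (which would force $\rho\le 1/\alpha$ globally), whereas its proof only uses this on a bounded right neighborhood of $0$; you implicitly read it the latter way (as the paper must also intend), which is fine, but worth noting explicitly if you use that version.
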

\begin{proof}
Notice that $\Lambda^1_t=\Lambda^2_t$ and $X^1_t=X^2_t$, as the two solutions must have the same jump size at $t$. Moreover, $\mathcal{X}:=X^i_{t}\,\bone_{\{\tau^i>t\}}$ (with $\tau^i$ defined in an obvious manner as in \eqref{Stefan_prob}) satisfies the conditions of Lemma \ref{le:stability.critical.1.n} by Theorem \ref{thm1}. 

Next, we write
$$
\widetilde \Lambda^i_s:=\Lambda^i_s - \Lambda^i_t = \alpha\PP\big(\mathcal{X}>0,\,\inf_{r\in[t,s]} (\mathcal{X} + \widetilde B_r - \widetilde\Lambda^i_r) \leq 0 \big),\quad s\geq t.
$$
Thus,
\begin{equation*}
\begin{split}
\widetilde{\Delta}_s:=\widetilde\Lambda^1_s - \widetilde\Lambda^2_s &= \alpha
\Big(\PP\big(\mathcal{X}>0,\inf_{r\in[t,s]} (\mathcal{X}\!+\!\widetilde B_r\!-\!\widetilde\Lambda^1_r) \leq 0 \big) - \PP\big(\mathcal{X}>0,\inf_{r\in[t,s]} (\mathcal{X}\!+\!\widetilde B_r\!-\!\widetilde\Lambda^2_r) \leq 0 \big)\!\Big)
\\
&\leq \alpha \PP\big(0<\inf_{r\in[t,s]} (\mathcal{X}+ \widetilde{B}_{r} - \widetilde\Lambda^2_r) \leq \sup_{r\in[t,s]} \widetilde\Delta_r \big),\quad s\geq t. 
\end{split}
\end{equation*}
Combining this with Lemma \ref{le:stability.critical.1.n} we deduce that 
\begin{equation*}
\widetilde{\Delta}_s\le \sup_{r\in[t,s]} \widetilde\Delta_r-\frac{\alpha}{2}\int_0^{\sup_{r\in[t,s]} \widetilde\Delta_r} \psi(z)\,\mathrm{d}z,\quad s\in[t,T].
\end{equation*}

Decreasing $T>t$ if necessary to make the right-hand side of the latter display non-decreasing in $\sup_{r\in[t,s]} \widetilde\Delta_r$ (recall that $\widetilde{\Delta}$ is right-continuous and $\psi(0+)=0$) and taking the running supremum of both sides we arrive at
\begin{equation*}
0\leq \sup_{r\in[t,s]} \widetilde{\Delta}_r\le \sup_{r\in[t,s]} \widetilde\Delta_r-\frac{\alpha}{2}\int_0^{\sup_{r\in[t,s]} \widetilde\Delta_r} \psi(z)\,\mathrm{d}z,\quad s\in[t,T].
\end{equation*}
Since $\psi$ is strictly positive on $(0,\delta_0)$, for some $\delta_0>0$, we have $\sup_{r\in[t,s]} \widetilde{\Delta}_s=0$, i.e., $\widetilde{\Lambda}^1_s\le\widetilde{\Lambda}^2_s$, for all small enough $s>t$. Reversing the roles of $\widetilde{\Lambda}^1$ and $\widetilde{\Lambda}^2$ we complete the proof of the proposition.  
\end{proof}

\medskip

Theorem \ref{thm:uniq} is an easy consequence of Proposition \ref{prop:uniqueness.stability}. Indeed, assuming that there exist two distinct physical solutions $(X^1,\Lambda^1)$ and $(X^2,\Lambda^2)$  of \eqref{Stefan_prob} and letting 
\begin{equation*}
t:=\inf\{s\ge 0:\,\Lambda^1_s\neq\Lambda^2_s\}\in[0,\infty) 
\end{equation*}
we see that Proposition \ref{prop:uniqueness.stability} contradicts the definition of $t$.

\bigskip\bigskip

\bibliographystyle{amsalpha}
\bibliography{Main}

\providecommand{\bysame}{\leavevmode\hbox to3em{\hrulefill}\thinspace}
\providecommand{\MR}{\relax\ifhmode\unskip\space\fi MR }
\providecommand{\MRhref}[2]{%
  \href{http://www.ams.org/mathscinet-getitem?mr=#1}{#2}
}
\providecommand{\href}[2]{#2}
\begin{thebibliography}{CDMGP16}

\bibitem[AF88]{AF}
S.~B. Angenent and B.~Fiedler, \emph{The dynamics of rotating waves in scalar
  reaction diffusion equations}, Trans. Amer. Math. Soc. \textbf{307} (1988),
  no.~2, 545--568. \MR{940217}

\bibitem[BCS03]{BCS2}
K.~Burdzy, Z.-Q. Chen, and J.~Sylvester, \emph{The heat equation and reflected
  {B}rownian motion in time-dependent domains. {II}. {S}ingularities of
  solutions}, J. Funct. Anal. \textbf{204} (2003), no.~1, 1--34. \MR{2004743}

\bibitem[BCS04]{BCS1}
\bysame, \emph{The heat equation and reflected {B}rownian motion in
  time-dependent domains}, Ann. Probab. \textbf{32} (2004), no.~1B, 775--804.
  \MR{2039943}

\bibitem[Bri30]{Bri}
M.~Brillouin, \emph{Sur quelques probl\`emes non r\'{e}solus de la {P}hysique
  {M}ath\'{e}matique classique. {P}ropagation de la fusion}, Ann. Inst. H.
  Poincar\'{e} \textbf{1} (1930), no.~3, 285--308. \MR{1507990}

\bibitem[CCP11]{Caceres2011}
M.~C{\'a}ceres, J.~Carrillo, and B.~Perthame, \emph{Analysis of nonlinear noisy
  integrate {\&} fire neuron models: blow-up and steady states}, J. Math.
  Neurosci. \textbf{1} (2011), no.~1, 7.

\bibitem[CDMGP16]{CDGP}
Gioia Carinci, Anna De~Masi, Cristian Giardin\`a, and Errico Presutti,
  \emph{Free boundary problems in {PDE}s and particle systems}, SpringerBriefs
  in Mathematical Physics, vol.~12, Springer, [Cham], 2016. \MR{3497333}

\bibitem[CGGS13]{Car2013}
J.~Carrillo, M.~Gonz\'{a}lez, M.~Gualdani, and M.~Schonbek, \emph{Classical
  solutions for a nonlinear {F}okker-{P}lanck equation arising in computational
  neuroscience}, Comm. Partial Differential Equations \textbf{38} (2013),
  no.~3, 385--409.

\bibitem[CH82]{ChowHale}
S.~N. Chow and J.~K. Hale, \emph{Methods of bifurcation theory}, Grundlehren
  der Mathematischen Wissenschaften [Fundamental Principles of Mathematical
  Science], vol. 251, Springer-Verlag, New York-Berlin, 1982.

\bibitem[CK08]{ChKi}
Lincoln Chayes and Inwon~C. Kim, \emph{A two-sided contracting {S}tefan
  problem}, Comm. Partial Differential Equations \textbf{33} (2008), no.~10-12,
  2225--2256. \MR{2479285}

\bibitem[CK12]{CK}
\bysame, \emph{The supercooled {S}tefan problem in one dimension}, Commun. Pure
  Appl. Anal. \textbf{11} (2012), no.~2, 845--859. \MR{2861812}

\bibitem[CPSS15]{Capesas2015}
J.~Carrillo, B.~Perthame, D.~Salort, and D.~Smets, \emph{Qualitative properties
  of solutions for the noisy integrate and fire model in computational
  neuroscience}, Nonlinearity \textbf{28} (2015), no.~9, 3365--3388.

\bibitem[CS96]{ChSw}
L.~Chayes and G.~Swindle, \emph{Hydrodynamic limits for one-dimensional
  particle systems with moving boundaries}, Ann. Probab. \textbf{24} (1996),
  no.~2, 559--598. \MR{1404521}

\bibitem[DF84]{DF}
E.~DiBenedetto and A.~Friedman, \emph{The ill-posed {H}ele-{S}haw model and the
  {S}tefan problem for supercooled water}, Trans. Amer. Math. Soc. \textbf{282}
  (1984), no.~1, 183--204. \MR{728709}

\bibitem[DHOX89]{DHOX}
J.~N. Dewynne, S.~D. Howison, J.~R. Ockendon, and Wei~Qing Xie,
  \emph{Asymptotic behavior of solutions to the {S}tefan problem with a kinetic
  condition at the free boundary}, J. Austral. Math. Soc. Ser. B \textbf{31}
  (1989), no.~1, 81--96. \MR{1002093}

\bibitem[DIRT13]{DIRT3}
F.~Delarue, J.~Inglis, S.~Rubenthaler, and E.~Tanr{\'e}, \emph{First hitting
  times for general non-homogeneous 1d diffusion processes: density estimates
  in small time}, hal-00870991, 2013.

\bibitem[DIRT15a]{DIRT1}
F.~Delarue, J.~Inglis, S.~Rubenthaler, and E.~Tanr\'{e}, \emph{Global
  solvability of a networked integrate-and-fire model of {M}c{K}ean-{V}lasov
  type}, Ann. Appl. Probab. \textbf{25} (2015), no.~4, 2096--2133. \MR{3349003}

\bibitem[DIRT15b]{DIRT2}
\bysame, \emph{Particle systems with a singular mean-field self-excitation.
  {A}pplication to neuronal networks}, Stochastic Process. Appl. \textbf{125}
  (2015), no.~6, 2451--2492. \MR{3322871}

\bibitem[DT17]{DT}
A.~Dembo and L.-C. Tsai, \emph{The criticality of a randomly-driven front},
  arXiv:1705.10017, 2017.

\bibitem[FP83]{FP2}
A.~Fasano and M.~Primicerio, \emph{A critical case for the solvability of
  {S}tefan-like problems}, Math. Methods Appl. Sci. \textbf{5} (1983), no.~1,
  84--96. \MR{690897}

\bibitem[FP81]{FP1}
\bysame, \emph{New results on some classical parabolic free-boundary problems},
  Quart. Appl. Math. \textbf{38} (1980/81), no.~4, 439--460. \MR{614552}

\bibitem[FPHO89]{FPHO}
A.~Fasano, M.~Primicerio, S.~D. Howison, and J.~R. Ockendon, \emph{On the
  singularities of one-dimensional {S}tefan problems with supercooling},
  Mathematical models for phase change problems (\'{O}bidos, 1988), Internat.
  Ser. Numer. Math., vol.~88, Birkh\"{a}user, Basel, 1989, pp.~215--226.
  \MR{1038071}

\bibitem[FPHO90]{FPHO2}
\bysame, \emph{Some remarks on the regularization of supercooled one-phase
  {S}tefan problems in one dimension}, Quart. Appl. Math. \textbf{48} (1990),
  no.~1, 153--168. \MR{1040239}

\bibitem[GSV07]{GasSotVal}
D.~Gasbarra, T.~Sottinen, and E.~Valkeila, \emph{Gaussian bridges}, Stochastic
  Analysis and Applications (Berlin, Heidelberg) (F.~E. Benth, G.~Di~Nunno,
  T.~Lindstr{\o}m, B.~{\O}ksendal, and T.~Zhang, eds.), Springer Berlin
  Heidelberg, 2007, pp.~361--382.

\bibitem[GZ95]{GZ}
Ivan~G. G\"{o}tz and Boris Zaltzman, \emph{Two-phase {S}tefan problem with
  supercooling}, SIAM J. Math. Anal. \textbf{26} (1995), no.~3, 694--714.
  \MR{1325910}

\bibitem[HLS18]{HLS}
B.~Hambly, S.~Ledger, and A.~Sojmark, \emph{A {M}c{K}ean--{V}lasov equation
  with positive feedback and blow-ups}, arXiv:1801.07703, 2018.

\bibitem[HV96]{HV}
Miguel~A. Herrero and Juan J.~L. Vel\'{a}zquez, \emph{Singularity formation in
  the one-dimensional supercooled {S}tefan problem}, European J. Appl. Math.
  \textbf{7} (1996), no.~2, 119--150. \MR{1388108}

\bibitem[HX89]{HX}
S.~Howison and W.~Q. Xie, \emph{Kinetic undercooling regularization of
  supercooled {S}tefan problems}, Mathematical models for phase change problems
  (\'{O}bidos, 1988), Internat. Ser. Numer. Math., vol.~88, Birkh\"{a}user,
  Basel, 1989, pp.~227--237. \MR{1038072}

\bibitem[Kam61]{Kam}
S.~L. Kamenomostskaja, \emph{On {S}tefan's problem}, Mat. Sb. (N.S.) \textbf{53
  (95)} (1961), 489--514. \MR{0141895}

\bibitem[KE05]{KE}
J.~R. King and J.~D. Evans, \emph{Regularization by kinetic undercooling of
  blow-up in the ill-posed {S}tefan problem}, SIAM J. Appl. Math. \textbf{65}
  (2005), no.~5, 1677--1707. \MR{2177720}

\bibitem[Kom79]{Kom}
G.~Komatsu, \emph{Analyticity up to the boundary of solutions of nonlinear
  parabolic equations}, Comm. Pure Appl. Math. \textbf{32} (1979), no.~5,
  669--720. \MR{533297}

\bibitem[KR18]{KR}
V.~Kaushansky and C.~Reisinger, \emph{Simulation of particle systems
  interacting through hitting times}, arXiv:1805.11678, 2018.

\bibitem[Kry96]{KrylovHolder}
N.~V. Krylov, \emph{Lectures on elliptic and parabolic equations in
  {H}\"{o}lder spaces}, Graduate Studies in Mathematics, vol.~12, American
  Mathematical Society, Providence, RI, 1996.

\bibitem[KS79]{KrySaf}
N.~V. Krylov and M.~V. Safonov, \emph{An estimate for the probability of a
  diffusion process hitting a set of positive measure}, Dokl. Akad. Nauk SSSR
  \textbf{245} (1979), no.~1, 18--20. \MR{525227}

\bibitem[KS91]{KaSh}
I.~Karatzas and S.~E. Shreve, \emph{Brownian motion and stochastic calculus},
  second ed., Graduate Texts in Mathematics, vol. 113, Springer-Verlag, New
  York, 1991. \MR{1121940}

\bibitem[LC31]{LC}
G.~Lam\'{e} and B.~P. Clapeyron, \emph{M\'{e}moire sur la solidification par
  r\'{e}froidissement d'un globe liquide}, Ann. Chimie Physique \textbf{47}
  (1831), 250--256.

\bibitem[LKR18]{LKR}
A.~Lipton, V.~Kaushansky, and C.~Reisinger, \emph{Semi-analytical solution of a
  {M}c{K}ean--{V}lasov equation with feedback through hitting a boundary},
  arXiv:1808.05311, 2018.

\bibitem[LO85]{LO}
A.~A. Lacey and J.~R. Ockendon, \emph{Ill-posed free boundary problems},
  Control Cybernet. \textbf{14} (1985), no.~1-3, 275--296 (1986). \MR{839524}

\bibitem[LR03]{lewis_dynamics_2003}
T.~J. Lewis and J.~Rinzel, \emph{Dynamics of spiking neurons connected by both
  inhibitory and electrical coupling}, J. Comput. Neurosci. \textbf{14} (2003),
  no.~3, 283--309.

\bibitem[LS18a]{LS}
S.~Ledger and A.~Sojmark, \emph{At the mercy of the common noise: blow-ups in a
  conditional {M}c{K}ean--{V}lasov problem}, arXiv:1807.05126, 2018.

\bibitem[LS18b]{LS2}
\bysame, \emph{Uniqueness for contagious {M}c{K}ean--{V}lasov systems in the
  weak feedback regime}, arXiv:1811.12356, 2018.

\bibitem[LSU68]{Lady}
O.~A. Lady\v{z}enskaja, V.~A. Solonnikov, and N.~N. Ural'ceva, \emph{Linear and
  quasilinear equations of parabolic type}, Translated from the Russian by S.
  Smith. Translations of Mathematical Monographs, Vol. 23, American
  Mathematical Society, Providence, R.I., 1968. \MR{0241822}

\bibitem[Luc90]{Lu}
Stephan Luckhaus, \emph{Solutions for the two-phase {S}tefan problem with the
  {G}ibbs-{T}homson law for the melting temperature}, European J. Appl. Math.
  \textbf{1} (1990), no.~2, 101--111. \MR{1117346}

\bibitem[McK05]{McK}
H.~P. McKean, \emph{Stochastic integrals}, AMS Chelsea Publishing, Providence,
  RI, 2005, Reprint of the 1969 edition, with errata. \MR{2169626}

\bibitem[NS17]{NadShk1}
S.~Nadtochiy and M.~Shkolnikov, \emph{Particle systems with singular
  interaction through hitting times: application in systemic risk modeling}, To
  appear in Ann. Appl. Probab., arXiv:1705.00691, 2017.

\bibitem[NS18]{NadShk2}
\bysame, \emph{Mean field systems on networks, with singular interaction
  through hitting times}, arXiv:1807.02015, 2018.

\bibitem[OBH09]{ostojic_synchronization_2009}
S.~Ostojic, N.~Brunel, and V.~Hakim, \emph{Synchronization properties of
  networks of electrically coupled neurons in the presence of noise and
  heterogeneities}, J. Comput. Neurosci. \textbf{26} (2009), no.~3, 369--392.

\bibitem[Rub71]{Rub}
L.~I. Ruben\v{s}te\u{\i}n, \emph{The {S}tefan problem}, American Mathematical
  Society, Providence, R.I., 1971, Translated from the Russian by A. D.
  Solomon, Translations of Mathematical Monographs, Vol. 27. \MR{0351348}

\bibitem[Rud66]{Rudin}
W.~Rudin, \emph{Real and complex analysis}, McGraw-Hill, New York, 1966.

\bibitem[RY99]{RY}
D.~Revuz and M.~Yor, \emph{Continuous martingales and {B}rownian motion}, third
  ed., Grundlehren der Mathematischen Wissenschaften [Fundamental Principles of
  Mathematical Sciences], vol. 293, Springer-Verlag, Berlin, 1999. \MR{1725357}

\bibitem[She70]{Sher}
B.~Sherman, \emph{A general one-phase {S}tefan problem}, Quart. Appl. Math.
  \textbf{28} (1970), 377--382. \MR{0282082}

\bibitem[Ste89]{Stefan1}
J.~Stefan, \emph{\"{U}ber einige {P}robleme der {T}heorie der
  {W}\"{a}rmeleitung}, Sitzungber., Wien, Akad. Mat. Natur. \textbf{98} (1889),
  473--484.

\bibitem[Ste90a]{Stefan2}
\bysame, \emph{\"{U}ber die {T}heorie der {E}isbildung}, Monatsh. Math. Phys.
  \textbf{1} (1890), no.~1, 1--6. \MR{1546138}

\bibitem[Ste90b]{Stefan3}
\bysame, \emph{{\"U}ber die {V}erdampfung und die {A}ufl\"{o}sung als
  {V}org\"{a}nge der {D}iffusion}, Ann. Physik \textbf{277} (1890), 725--747.

\bibitem[Ste91]{Stefan4}
\bysame, \emph{\"{U}ber die {T}heorie der {E}isbildung, insbesondere \"{u}ber
  die {E}isbildung im {P}olarmeere}, Ann. Physik Chemie \textbf{42} (1891),
  269--286.

\bibitem[Szn91]{Szn}
A.-S. Sznitman, \emph{Topics in propagation of chaos}, \'{E}cole d'\'{E}t\'{e}
  de {P}robabilit\'{e}s de {S}aint-{F}lour {XIX}---1989, Lecture Notes in
  Math., vol. 1464, Springer, Berlin, 1991, pp.~165--251. \MR{1108185}

\bibitem[Vis87]{Vis}
A.~Visintin, \emph{Stefan problem with a kinetic condition at the free
  boundary}, Ann. Mat. Pura Appl. (4) \textbf{146} (1987), 97--122. \MR{916689}

\bibitem[Xie90]{Wei}
W.~Q. Xie, \emph{The {S}tefan problem with a kinetic condition at the free
  boundary}, SIAM J. Math. Anal. \textbf{21} (1990), no.~2, 362--373.
  \MR{1038897}

\end{thebibliography}

\bigskip\bigskip\medskip

\end{document}